\numberwithin{equation}{section}
\def\bb#1\eb{\textcolor{blue}
{#1}} %
\def\br#1\er{\textcolor{red}
{#1}} %
\def\bv#1\ev{\textcolor{green}
{#1}} %
\def\bc#1\ec{\textcolor{cyan}
{#1}} %
\def\Xint#1{\mathchoice
  {\XXint\displaystyle\textstyle{#1}}%
  {\XXint\textstyle\scriptstyle{#1}}%
  {\XXint\scriptstyle\scriptscriptstyle{#1}}%
  {\XXint\scriptscriptstyle\scriptscriptstyle{#1}}%
  \!\int}
\def\XXint#1#2#3{{\setbox0=\hbox{$#1{#2#3}{\int}$}
  \vcenter{\hbox{$#2#3$}}\kern-.5\wd0}}
\def\-int{\Xint -}
\newcommand{\R}{\mathbb{R}}
\DeclareMathOperator{\X}{\mathbb{H}}
\DeclareMathOperator{\e}{\varepsilon}
\newcommand{\M}{\mathcal{M}}
\newcommand{\N}{\mathcal{N}}
\DeclareMathOperator{\J}{\mathcal{J}}
\newtheorem{lem}{Lemma}[section]
\newtheorem{thm}{Theorem}[section]
\newtheorem{cor}{Corollary}[section]
\begin{document}
\title[Multiplicity for fractional Schr\"odinger systems]{Multiplicity of solutions for fractional  Schr\"odinger  systems in $\R^{N}$} 

\author[V. Ambrosio]{Vincenzo Ambrosio}
\address{Vincenzo Ambrosio\hfill\break\indent
Dipartimento di Ingegneria Industriale e Scienze Matematiche \hfill\break\indent
Universit\`a Politecnica delle Marche\hfill\break\indent
Via Brecce Bianche, 12\hfill\break\indent
60131 Ancona (Italy)}
\email{v.ambrosio@univpm.it}

\keywords{Fractional Schr\"odinger systems; variational methods; Ljusternik-Schnirelmann theory; positive solutions}
\subjclass[2010]{35J50, 35R11, 58E05}

\begin{abstract}
In this paper we deal with the following nonlocal systems of fractional Schr\"odinger equations
\begin{equation*}
\left\{
\begin{array}{ll}
\e^{2s} (-\Delta)^{s}u+V(x)u=Q_{u}(u, v)+\gamma H_{u}(u, v) &\mbox{ in } \R^{N}\\
\e^{2s} (-\Delta)^{s}v+W(x)v=Q_{v}(u, v)+\gamma H_{v}(u, v) &\mbox{ in } \R^{N} \\
u, v>0 &\mbox{ in } \R^{N},
\end{array}
\right.
\end{equation*}
where $\e>0$, $s\in (0, 1)$, $N>2s$, $(-\Delta)^{s}$ is the fractional Laplacian, $V:\R^{N}\rightarrow \R$ and $W:\R^{N}\rightarrow \R$ are continuous potentials, $Q$ is a homogeneous $C^{2}$-function with subcritical growth, $\gamma\in \{0, 1\}$ and $H(u, v)=\frac{2}{\alpha+\beta}|u|^{\alpha} |v|^{\beta}$ with $\alpha, \beta\geq 1$ such that $\alpha+\beta=2^{*}_{s}$. \\
We investigate the subcritical case $(\gamma=0)$ and the critical case $(\gamma=1)$, and using Ljusternik-Schnirelmann theory, we relate the number of solutions with the topology of the set where the potentials $V$ and $W$ attain their minimum values.
\end{abstract}

\maketitle
\section{Introduction}

\noindent
In the last decade a tremendous popularity has received the study of nonlinear partial differential equations involving fractional and nonlocal operators, due to the fact that such operators have great applications in many areas of the research such as crystal dislocation, finance, phase transitions, material sciences, chemical reactions, minimal surfaces; see for instance \cite{DPV, MBRS} for more details.\\
Motivated by the interest shared by the mathematical community in this topic, 
the aim of this paper is to investigate the existence and multiplicity of positive solutions for the following nonlinear fractional Schr\"odinger system
\begin{equation}\label{Pgamma}
\left\{
\begin{array}{ll}
\e^{2s} (-\Delta)^{s}u+V(x)u=Q_{u}(u, v)+\gamma H_{u}(u, v) &\mbox{ in } \R^{N}\\
\e^{2s} (-\Delta)^{s}u+W(x)v=Q_{v}(u, v)+\gamma H_{v}(u, v) &\mbox{ in } \R^{N} \\
u, v>0 &\mbox{ in } \R^{N},
\end{array}
\right.
\end{equation}
where $\e>0$ is a parameter, $s\in (0, 1)$, $N>2s$, $V:\R^{N}\rightarrow \R$ and $W:\R^{N}\rightarrow \R$ are continuous potentials, $Q$ is a homogeneous $C^{2}$-function with subcritical growth, $\gamma\in \{0, 1\}$, and $H(u, v)=\frac{1}{\alpha+\beta} |u|^{\alpha} |v|^{\beta}$ where $\alpha, \beta\geq 1$ are such that $\alpha+\beta=2^{*}_{s}=\frac{2N}{N-2s}$.\\
The nonlocal operator $(-\Delta)^{s}$ is the so-called fractional Laplacian operator which can be defined for any $u: \R^{N}\rightarrow \R$ smooth enough, by setting
$$
(-\Delta)^{s}u(x)=-\frac{C(N, s)}{2} \int_{\R^{N}} \frac{u(x+y)+u(x-y)-2u(x)}{|y|^{N+2s}} dy  \quad (x\in \R^{N}),
$$
where $C(N, s)$ is a dimensional constant depending only on $N$ and $s$; see  for instance \cite{DPV}.\\
In the scalar case, problem \eqref{Pgamma} becomes the well-known fractional Schr\"odinger equation
\begin{equation}\label{FSE}
\e^{2s} (-\Delta)^{s}u+V(x)u=f(x, u) \mbox{ in } \R^{N}.
\end{equation}
We recall that one of the main reasons of studying (\ref{FSE}), is related to the seek of standing wave solutions $\Phi(t, x)=u(x) e^{-\frac{\imath c t}{\hbar}}$ for the following time-dependent fractional Schr\"odinger equation
\begin{equation}\label{TDFSE}
i \hbar \frac{\partial \Phi}{\partial t} = \frac{\hbar^{2}}{2m} (-\Delta)^{s} \Phi +
V(x) \Phi - g(|\Phi|)\Phi \mbox{ for } (t, x)\in \R\times \R^{N}. 
\end{equation}
Equation (\ref{TDFSE}) has been proposed by Laskin \cite{Laskin2}, and it is a fundamental equation of fractional Quantum Mechanics in the study of particles on stochastic fields modelled by L\'evy processes.\\
When $s=1$, equation (\ref{FSE}) reduces to the classical Schr\"odinger equation
\begin{align}\label{Pe}
-\varepsilon ^{2} \Delta u + V(x) u = f(u) \mbox{ in } \R^{N},
\end{align}
which has been extensively studied in the last thirty years by many authors; see for instance \cite{AF, BW, CL, DF, FW, Rab, w} and the references therein.

\smallskip

Recently, the study of fractional Schr\"odinger equations has attracted the attention of many mathematicians.
Felmer et al. \cite{FQT} investigated existence, regularity and qualitative properties of positive solution to 
\eqref{FSE} when $V$ is constant, and $f$ is a smooth function with subcritical growth satisfying the Ambrosetti-Rabinowitz condition. Secchi \cite{Secchi1} proved an existence result for a nonlinear fractional Schr\"odinger equation involving a subcritical nonlinearity and under weak assumptions on the behaviour of the potential $V$ at infinity. 
Frank et al. \cite{FLS} studied uniqueness and nondegeneracy of ground state solutions to \eqref{FSE} with $
f(u)=|u|^{\alpha}u$, for all $H^{s}$-admissible powers $\alpha \in (0, \alpha^{*})$.
The author \cite{A4} showed the existence of infinitely many solutions  to \eqref{FSE} with $V(x)=1$, and $f$ is autonomous and satisfies Berestycki-Lions type assumptions.
Shang et al. \cite{SZY} used variational methods to deal with the multiplicity of solutions of a fractional Schr\"odinger equation with critical growth, and with a continuous and positive potential $V$.
Figueiredo and Siciliano \cite{FS} obtained a multiplicity result  by means of the Ljusternik-Shnirelmann and Morse theories for (\ref{FSE}) involving a superlinear nonlinearity with subcritical growth.
Alves and Miyagaki in \cite{AM} dealt with the existence and the concentration of positive solutions to (\ref{FSE}) via penalization technique and the extension method \cite{CS}. 
We also mention the papers \cite{A2, A3, AI, DDPW, DMV, FS, I} where the existence and the multiplicity of solutions to \eqref{FSE} have been investigated under various assumptions on the potential $V$ and the nonlinearity $f$, by using suitable variational and topological methods. \\
Particularly motivated by the papers \cite{FS, SZY}, in this work we aim to extend the multiplicity results for both subcritical and critical cases obtained for the scalar equation \eqref{FSE} to the case of the systems. 
More precisely, we generalize in the nonlocal setting some existence and multiplicity results appeared in \cite{AFS, AS, AY,  FF} in which the authors studied elliptic systems of the type
\begin{equation*}
\left\{
\begin{array}{ll}
-\e^{2} \Delta u+V(x)u=Q_{u}(u, v)+\gamma H_{u}(u, v) &\mbox{ in } \R^{N}\\
-\e^{2} \Delta v+W(x)v=Q_{v}(u, v)+\gamma H_{v}(u, v) &\mbox{ in } \R^{N} \\
u, v>0  &\mbox{ in } \R^{N}.
\end{array}
\right.
\end{equation*}
To the best of our knowledge, there are few results on the nonlinear systems involving the fractional Laplacian in the literature \cite{cdeng, LMBZ, lm, ww} and the results presented here seems to be new in the nonlocal framework.

\smallskip
In order to state the main theorems obtained in this work, we come back to our problem \eqref{Pgamma}, and we introduce the assumptions on the potentials $V$, $W$ and the function $Q$.\\
Firstly, we define the following constants
$$
V_{0}=\inf_{x\in \R^{N}} V(x) \mbox{ and } W_{0}=\inf_{x\in \R^{N}} W(x)
$$
and
$$
V_{\infty}=\liminf_{|x|\rightarrow  \infty} V(x) \mbox{ and } W_{\infty}=\liminf_{|x|\rightarrow  \infty} W(x).
$$
Along the paper, we will assume the following conditions on $V$ and $W$:
\begin{compactenum}[$(H1)$]
\item $V_{0}=W_{0}>0$, and $M=\{x\in \R^{N}: V(x)=W(x)=V_{0} \}$ is nonempty;
\item $V_{0}<\max\{V_{\infty}, W_{\infty}\}$.
\end{compactenum}

\medskip

Regarding the function $Q$, we suppose that $Q\in C^{2}(\R^{2}_{+}, \R)$ and satisfies the following conditions:
\begin{compactenum}[$(Q1)$]
\item there exists $q\in (2, 2^{*}_{s})$ such that $Q(t u, tv)=t^{q}Q(u, v)$ for all $t>0$, $(u, v)\in \R^{2}_{+}$;
\item there exists $C>0$ such that $|Q_{u}(u, v)|+|Q_{v}(u, v)|\leq C(u^{q-1}+v^{q-1})$ for all $(u, v)\in \R^{2}_{+}$;
\item $Q_{u}(0, 1)=0=Q_{v}(1, 0)$;
\item $Q_{u}(1, 0)=0=Q_{v}(0, 1)$;
\item $Q_{uv}(u, v)>0$ for all $(u, v)\in \R^{2}_{+}$.
\end{compactenum}

\smallskip

Since we look for positive solutions of \eqref{Pgamma}, we extend the function $Q$ to the whole $\R^{2}$ by setting $Q(u, v)=0$ if $u\leq 0$ or $v\leq 0$. We note that the $q$-homogeneity of $Q$ implies that the following identity holds:
\begin{equation}\label{2.1}
qQ(u, v)=uQ_{u}(u, v)+vQ_{v}(u, v) \mbox{ for any } (u, v)\in \R^{2}.
\end{equation}
Moreover, using $(Q2)$, we can see that there exists $C>0$ such that 
\begin{equation}\label{2.2}
|Q(u, v)|\leq C(|u|^{q}+|v|^{q}) \mbox{ for any } (u, v)\in \R^{2}.
\end{equation}
A typical example (see \cite{DMFS}) of function $Q$ which satisfies the above assumptions is the following one. 
Let $p\geq 1$ and 
$$
\mathcal{P}_{p}(u, v)=\sum_{\alpha_{i}+\beta_{i}=p} a_{i} u^{\alpha_{i}} v^{\beta_{i}},
$$
where $i\in \{1, \dots, k\}$, $\alpha_{i}, \beta_{i}\geq 1$ and $a_{i}\in \R$. The following functions and their possible combinations, with appropriate choice of the coefficients $a_{i}$, satisfy assumptions $(Q1)$-$(Q5)$ on $Q$
$$
Q_{1}(u, v)=\mathcal{P}_{q}(u, v),  \quad Q_{2}(u, v))=\sqrt[r]{\mathcal{P}_{l}(u, v)} \quad \mbox{ and } \quad Q_{3}(u, v)=\frac{\mathcal{P}_{l_{1}}(u, v)}{\mathcal{P}_{l_{2}}(u, v)},
$$
with $r= l q$ and $l_{1}-l_{2}=q$.

\smallskip

\noindent
Now, we pass to state our main multiplicity results related to \eqref{Pgamma}.
When we take $\gamma=0$ in \eqref{Pgamma}, we have to deal with a system with subcritical growth, namely
\begin{equation}\label{P}
\left\{
\begin{array}{ll}
\e^{2s} (-\Delta)^{s}u+V(x)u=Q_{u}(u, v) &\mbox{ in } \R^{N}\\
\e^{2s} (-\Delta)^{s}v+W(x)v=Q_{v}(u, v) &\mbox{ in } \R^{N} \\
u, v>0 &\mbox{ in } \R^{N}.
\end{array}
\right.
\end{equation}

\noindent
Since we aim to relate the number of solutions of \eqref{P} with the topology of the set $M$ of minima of the potential, it is worth recalling that if $Y$ is a given closed set of a topological space $X$, we denote by $cat_{X}(Y)$ the Ljusternik-Schnirelmann category of $Y$ in $X$, that is the least number of closed and contractible sets in $X$ which cover $Y$; see \cite{W} for more details.\\
With the above notations, the first main multiplicity result can be stated as follows.
\begin{thm}\label{thm1}
Assume that $(H1)$-$(H2)$ and $(Q1)$-$(Q5)$ hold. Then, for any $\delta>0$, there exists $\e_{\delta}>0$ such that for any $\e\in (0, \e_{\delta})$, system \eqref{P} admits at least $cat_{M_{\delta}}(M)$ solutions, where $M_{\delta}=\{x\in \R^{N}: dist(x, M)\leq \delta\}$.
\end{thm}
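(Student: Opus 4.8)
The strategy is the Benci--Cerami--type argument used for the scalar fractional equation in \cite{FS, SZ}, adapted to the coupled system. After the change of variables $x\mapsto \e x$, problem \eqref{P} becomes
$$
(-\Delta)^{s}u+V(\e x)u=Q_{u}(u,v),\qquad (-\Delta)^{s}v+W(\e x)v=Q_{v}(u,v)\quad\text{in }\R^{N},
$$
whose positive solutions are in one-to-one correspondence with those of \eqref{P}. I would work on $\X_{\e}:=H^{s}(\R^{N})\times H^{s}(\R^{N})$ with the $\e$-dependent norm $\|(u,v)\|_{\e}^{2}=[u]_{s}^{2}+[v]_{s}^{2}+\int_{\R^{N}}\bigl(V(\e x)u^{2}+W(\e x)v^{2}\bigr)\,dx$, for which the functional
$$
J_{\e}(u,v)=\tfrac12\|(u,v)\|_{\e}^{2}-\int_{\R^{N}}Q(u,v)\,dx
$$
is well defined and $C^{1}$ thanks to \eqref{2.2} and the embedding $H^{s}\hookrightarrow L^{q}$. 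Using the $q$-homogeneity \eqref{2.1} and $(Q5)$ (which forces both components of any least-energy element to be nontrivial), I would introduce the Nehari manifold $\N_{\e}$, show that each ray $t\mapsto J_{\e}(tu,tv)$ has a unique maximum point, and define $c_{\e}:=\inf_{\N_{\e}}J_{\e}$; the same construction with $V,W$ replaced by the constant $V_{0}$ gives a limit functional $J_{V_{0}}$, a ground-state level $c_{V_{0}}$, and (after a regularity and rearrangement argument) a positive ground state $(\omega_{1},\omega_{2})$.

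The compactness step is where the main difficulty lies. I would prove a splitting lemma for bounded Palais--Smale sequences of $J_{\e}$: modulo translations, such a sequence decomposes into a solution of the rescaled system plus finitely many ``bubbles'' solving autonomous problems, and the energy is additive along this decomposition. Assumption $(H2)$, namely $V_{0}<\max\{V_{\infty},W_{\infty}\}$ together with $V_{0}=W_{0}$ from $(H1)$, forces the ground-state level associated with the behaviour of $(V,W)$ at infinity to be strictly above $c_{V_{0}}$, so that $J_{\e}$ satisfies the $(PS)_{c}$ condition for every $c$ in a neighbourhood of $c_{V_{0}}$, uniformly for small $\e$. The nonlocal nature of $(-\Delta)^{s}$ and the coupling in $Q$ make the book-keeping in this lemma the delicate part.

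Finally I would run the topological comparison. For $y\in M$ (a compact set, by $(H1)$--$(H2)$) define $\Phi_{\e}(y)\in\N_{\e}$ by transplanting and Nehari-projecting the limit ground state, $\Phi_{\e}(y)=t_{\e}\bigl(\eta_{\e,y}\,\omega_{1}(\cdot-\tfrac{y}{\e}),\ \eta_{\e,y}\,\omega_{2}(\cdot-\tfrac{y}{\e})\bigr)$ with a standard cut-off $\eta_{\e,y}$, and show $\sup_{y\in M}J_{\e}(\Phi_{\e}(y))\to c_{V_{0}}$; since $c_{\e}\ge c_{V_{0}}+o(1)$, this isolates a sublevel $\N_{\e}^{c_{V_{0}}+h(\e)}$ on which $(PS)$ holds for small $\e$. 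A Lions-type concentration estimate shows that the $L^{2}$-mass of elements of this sublevel localizes near $M$, which lets one build a barycenter map $\beta_{\e}\colon\N_{\e}^{c_{V_{0}}+h(\e)}\to M_{\delta}$ with $\beta_{\e}\circ\Phi_{\e}$ homotopic to the inclusion $M\hookrightarrow M_{\delta}$. The abstract Ljusternik--Schnirelmann theorem then yields at least $cat_{M_{\delta}}(M)$ critical points of $J_{\e}|_{\N_{\e}}$, hence of $J_{\e}$. Positivity of both components follows by testing the equations with the negative parts, using that $Q$ is extended by $0$ off $\R^{2}_{+}$ together with a fractional maximum principle; undoing the rescaling gives the stated $cat_{M_{\delta}}(M)$ solutions of \eqref{P} for every $\e\in(0,\e_{\delta})$.
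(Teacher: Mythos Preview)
Your proposal is correct and follows essentially the same Benci--Cerami scheme as the paper: the same rescaled functional $J_{\e}$ on the Nehari manifold, the autonomous ground state used to build $\Phi_{\e}$, a barycenter map $\beta_{\e}$, and the Ljusternik--Schnirelmann conclusion, with positivity obtained by testing against negative parts. The only cosmetic differences are that the paper obtains the autonomous ground state via a translation (Lions-type) argument rather than rearrangement, and its compactness step is a direct comparison with the level $c_{\infty}$ (Lemmas~4.1--4.2 and Theorem~4.1) rather than a full profile decomposition into bubbles.
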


\noindent
It is worth noting that, a common approach to deal with fractional nonlocal problems, is to make use of the  Caffarelli-Silvestre method \cite{CS}, which consists in transforming via a Dirichlet-Neumann map, a given nonlocal problem into a local degenerate elliptic problem set in the half-space $\R^{N+1}_{+}$ and with a nonlinear Neumann boundary condition. In this work, we prefer to analyze the problem directly in $H^{s}(\R^{N})$ in order to borrow some ideas developed in the case $s=1$ taking care of the fact that in our situation a more careful analysis is needed due to the nonlocal character of $(-\Delta)^{s}$.

\noindent
The proof of Theorem \ref{thm1} is variational and it is based on the method of the Nehari manifold. After proving some compactness results for the functional associated to \eqref{P}, and observing that the level of compactness are deeply related to the behaviour of the potentials $V$ and $W$ at infinity, 
we use some arguments developed in \cite{BC, CL}, to compare the category of some sub-levels of the functional and the category of the set $M$. 

\smallskip

\noindent
In the second part of our paper, we consider the critical case $\gamma=1$, that is
\begin{equation}\label{CP}
\left\{
\begin{array}{ll}
\e^{2s} (-\Delta)^{s}u+V(x)u=Q_{u}(u, v)+\frac{2\alpha}{\alpha+\beta} |u|^{\alpha-2}u |v|^{\beta} &\mbox{ in } \R^{N}\\
\e^{2s} (-\Delta)^{s}v+W(x)v=Q_{v}(u, v)+\frac{2\beta}{\alpha+\beta} |u|^{\alpha} |v|^{\beta-2}v &\mbox{ in } \R^{N} \\
u, v>0 &\mbox{ in } \R^{N},
\end{array}
\right.
\end{equation}
where $\alpha, \beta\geq 1$ are such that $\alpha+\beta=2^{*}_{s}$. \\
In this context, we assume that $Q$ fulfills the following technical assumption: 
\begin{compactenum}[$(Q6)$]
\item $Q(u, v)\geq \lambda u^{\tilde{\alpha}} v^{\tilde{\beta}}$ for any $(u, v)\in \R_{+}^{2}$ with $1<\tilde{\alpha}, \tilde{\beta}<2^{*}_{s}$, $\tilde{\alpha}+\tilde{\beta}=q_{1}\in (2, 2^{*}_{s})$, and $\lambda$ satisfying 
\begin{itemize}
\item $\lambda>0$ if either $N\geq 4s$, or $2s<N<4s$ and $2^{*}_{s}-2<q_{1}<2^{*}_{s}$;
\item $\lambda$ is sufficiently large if  $2s<N<4s$ and $2<q_{1}\leq 2^{*}_{s}-2$.
\end{itemize}
\end{compactenum}

\noindent
To obtain the multiplicity of positive solutions to \eqref{CP}, we proceed as in the subcritical case. Clearly, the lack of the compactness due to the presence of the critical Sobolev exponent, creates a further difficulty, and more accurate estimates are needed to localize the energy levels where the Palais-Smale condition fails.
To circumvent this hitch, we combine the estimates obtained in \cite{SV} with some adaptations of the calculations done in \cite{AFS}, which allow us to prove that the number
\begin{equation*}
\widetilde{S}_{*}(\alpha, \beta)=\inf_{u, v\in H^{s}(\R^{N})\setminus\{(0, 0)\}} \frac{\int_{\R^{N}} |(-\Delta)^{\frac{s}{2}} u|^{2}+|(-\Delta)^{\frac{s}{2}} v|^{2} dx}{\left(\int_{\R^{N}} |u|^{\alpha}|v|^{\beta} dx\right)^{\frac{2}{2^{*}_{s}}}}
\end{equation*}
is strongly related to the best constant $S_{*}$ of the Sobolev embedding $H^{s}(\R^{N})$ into $L^{2^{*}_{s}}(\R^{N})$, and plays a fundamental role when we have to study critical systems like \eqref{CP}.\\
Our second main result can be stated as follows.  
\begin{thm}\label{thm2}
Let us assume that $(H1)$-$(H2)$ and $(Q1)$-$(Q6)$ hold. If $\alpha, \beta\in [1, 2^{*}_{s})$ are such that $\alpha+\beta=2^{*}_{s}$, then  for any $\delta>0$, there exists $\e_{\delta}>0$ such that for any $\e\in (0, \e_{\delta})$, system \eqref{CP} possesses at least $cat_{M_{\delta}}(M)$ solutions.
\end{thm}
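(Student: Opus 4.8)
The plan is to run the Ljusternik--Schnirelmann scheme already used for Theorem \ref{thm1}, inserting at each step the extra estimates forced by the critical coupling $H$. I would first rescale $x\mapsto \e x$, which turns \eqref{CP} into
\begin{equation*}
\left\{
\begin{array}{ll}
(-\Delta)^{s}u+V(\e x)u=Q_{u}(u,v)+\tfrac{\alpha}{\alpha+\beta}|u|^{\alpha-2}u|v|^{\beta} &\mbox{ in }\R^{N}\\
(-\Delta)^{s}v+W(\e x)v=Q_{v}(u,v)+\tfrac{\beta}{\alpha+\beta}|u|^{\alpha}|v|^{\beta-2}v &\mbox{ in }\R^{N},
\end{array}
\right.
\end{equation*}
whose positive solutions are in one-to-one correspondence with those of \eqref{CP}. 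On the natural product space $\X_{\e}$, normed by the Gagliardo seminorms together with the weights $V(\e x),W(\e x)$, the associated $C^{2}$ functional $\J_{\e}$ has a Nehari manifold $\N_{\e}=\{(u,v)\neq(0,0):\langle \J_{\e}'(u,v),(u,v)\rangle=0\}$ which, thanks to $(Q1)$--$(Q5)$ and the $2^{*}_{s}$-homogeneity of $H$, is a $C^{1}$ manifold bounded away from $0$ on which every half-line from the origin has a unique intersection; hence $c_{\e}:=\inf_{\N_{\e}}\J_{\e}$ is a mountain-pass-type level, and one checks $c_{\e}\to c_{V_{0}}$ as $\e\to0$, where $c_{V_{0}}$ is the Nehari level of the autonomous limit system with constant potential $V_{0}=W_{0}$.

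The core new ingredient is the strict sub-threshold estimate
\begin{equation*}
c_{V_{0}}<\frac{s}{N}\,\widetilde{S}_{*}(\alpha,\beta)^{\frac{N}{2s}},
\end{equation*}
the right-hand side being the first level at which compactness is lost for the autonomous critical system, with $\widetilde{S}_{*}(\alpha,\beta)$ as in the Introduction. I would establish it by evaluating $\J_{V_{0}}$ along the fractional Sobolev instantons of \cite{SV}, combined with the expansions of \cite{AFS}, and it is exactly here that $(Q6)$ enters: the subcritical term $\lambda u^{\tilde{\alpha}}v^{\tilde{\beta}}$ hidden inside $Q$ must dominate the error terms in the instanton expansion, which explains the split between $N\geq 4s$ and $2s<N<4s$ and the requirement that $\lambda$ be large in the borderline range $2<q_{1}\leq 2^{*}_{s}-2$. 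With this in hand, a concentration--compactness decomposition of Palais--Smale sequences for $\J_{\e}$ — peeling off finitely many critical ``bubbles'', each carrying energy at least $\frac{s}{N}\widetilde{S}_{*}(\alpha,\beta)^{N/2s}$, plus nontrivial solutions of the limit systems with potentials $V_{0}$, $V_{\infty}$, $W_{\infty}$ — shows that $(PS)_{d}$ holds for $\J_{\e}$ for all $d$ strictly below the first bubble level and below the energies carried by mass escaping to infinity, a range which by $(H2)$ contains a whole neighbourhood of $c_{V_{0}}$.

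The topological part then mirrors Theorem \ref{thm1}. Fix $\delta>0$ and let $(w_{1},w_{2})$ be a ground state of $\J_{V_{0}}$, which exists by the strict inequality above. For $y\in M$ one builds test pairs $\Phi_{\e}(y)\in\N_{\e}$ by translating and rescaling $(w_{1},w_{2})$ to be centred at $y$, truncating at scale $\delta/\e$ and projecting onto $\N_{\e}$; a direct estimate gives $\J_{\e}(\Phi_{\e}(y))\to c_{V_{0}}$ uniformly in $y\in M$. Conversely a barycentre map $\beta_{\e}$ sends pairs in $\N_{\e}$ with energy close to $c_{V_{0}}$ into $M_{\delta}$, and $\beta_{\e}\circ\Phi_{\e}$ is homotopic to the inclusion $M\hookrightarrow M_{\delta}$. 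The Ljusternik--Schnirelmann theory on the complete $C^{1}$ manifold $\N_{\e}$ — legitimate because $(PS)_{d}$ holds up to $c_{V_{0}}+h(\e)$ with $h(\e)\to0$ — then produces at least $cat_{M_{\delta}}(M)$ critical points of $\J_{\e}$ in that sublevel, for every sufficiently small $\e$.

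Finally I would upgrade these critical points to genuine positive solutions: a Moser iteration and the regularity theory for $(-\Delta)^{s}$ with critical growth give boundedness and continuity, testing the equations against the negative parts together with the extension $Q\equiv0$ off $\R^{2}_{+}$ forces $u,v\geq0$, and the strong maximum principle for the fractional Laplacian gives $u,v>0$; undoing the rescaling yields solutions of \eqref{CP}. The decisive obstacle throughout is the loss of compactness from the critical exponent: everything rests on the estimate $c_{V_{0}}<\frac{s}{N}\widetilde{S}_{*}(\alpha,\beta)^{N/2s}$, whose proof is delicate because the coupled term $\int_{\R^{N}}|u|^{\alpha}|v|^{\beta}\,dx$ requires optimizing the instanton expansion simultaneously in the concentration parameter and in the ratio of the two components, and the gain coming from $(Q6)$ has to beat the remainder terms — which is precisely where the dimensional restrictions and the smallness/largeness conditions on $\lambda$ become essential.
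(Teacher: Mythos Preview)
Your proposal is correct and follows essentially the same route as the paper: rescale, set up the Nehari manifold, prove the key sub-threshold estimate $m_{0}<\frac{s}{N}\widetilde{S}_{*}^{N/2s}$ via the fractional instantons of \cite{SV} and assumption $(Q6)$, then run the Ljusternik--Schnirelmann machinery with test functions built from the autonomous ground state and the barycenter map exactly as in Section~5. The only minor deviation is that the paper obtains the Palais--Smale condition by a direct Brezis--Lieb splitting (adapting Lemmas~\ref{lem2.4}--\ref{lem2.5} and Theorem~\ref{prop2.3} to the critical case via Lemma~\ref{lem4.2}) rather than a full concentration--compactness bubble decomposition, but the resulting compactness range $d<\min\{m_{\infty},\tfrac{s}{N}\widetilde{S}_{*}^{N/2s}\}$ is the same.
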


We conclude this introduction observing that our results complement  the ones obtained in \cite{FS, SZY}, in the sense that now we are considering the multiplicity results in the case of systems. 
\smallskip

\noindent
The structure of the paper is the following. In Section $2$ we give some preliminary facts about the fractional Sobolev spaces and we set up the variational framework. In Section $3$ we deal with the autonomous problem related to \eqref{P}. In Section $4$ we prove some compactness results for the functional associated with \eqref{P}. In Section $5$ we present the proof of Theorem \ref{thm1}. In the last section, we discuss the existence and the multiplicity of solutions for the system \eqref{Pgamma} in the critical case $\gamma=1$.

\section{preliminaries and variational setting}

In this section we collect some preliminary results about the fractional Sobolev spaces, and we introduce the functional setting.\\
For any $s\in (0,1)$ we define $\mathcal{D}^{s, 2}(\R^{N})$ as the completion of $C^{\infty}_{0}(\R^{N})$ with respect to
$$
\int_{\R^{N}} |(-\Delta)^{\frac{s}{2}} u|^{2} dx =\iint_{\R^{2N}} \frac{|u(x)-u(y)|^{2}}{|x-y|^{N+2s}} \, dx \, dy,
$$
where the above equality holds up to a positive constant, or equivalently
$$
\mathcal{D}^{s, 2}(\R^{N})=\left\{u\in L^{2^{*}_{s}}(\R^{N}): \int_{\R^{N}} |(-\Delta)^{\frac{s}{2}} u|^{2} dx<\infty\right\}.
$$
Let us introduce the fractional Sobolev space
$$
H^{s}(\R^{N})= \left\{u\in L^{2}(\R^{N}) : \int_{\R^{N}} |(-\Delta)^{\frac{s}{2}} u|^{2} dx<\infty \right \}
$$
endowed with the natural norm 
$$
\|u\|_{H^{s}(\R^{N})} = \sqrt{\int_{\R^{N}} |(-\Delta)^{\frac{s}{2}} u|^{2} dx + \int_{\R^{N}} |u|^{2} dx}.
$$

\noindent
For the convenience of the reader, we recall the following embeddings:
\begin{thm}\cite{DPV}\label{Sembedding}
Let $s\in (0,1)$ and $N>2s$. Then there exists a sharp constant $S_{*}=S(N, s)>0$
such that for any $u\in H^{s}(\R^{N})$
\begin{equation}\label{FSI}
\left(\int_{\R^{N}} |u|^{2^{*}_{s}} dx\right)^{\frac{2}{2^{*}_{s}}}  \leq S_{*} \int_{\R^{N}} |(-\Delta)^{\frac{s}{2}} u|^{2} dx . 
\end{equation}
Moreover, $H^{s}(\R^{N})$ is continuously embedded in $L^{q}(\R^{N})$ for any $q\in [2, 2^{*}_{s}]$ and compactly in $L^{q}_{loc}(\R^{N})$ for any $q\in [1, 2^{*}_{s})$. 
\end{thm}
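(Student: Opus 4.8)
The plan is to obtain the fractional Sobolev inequality \eqref{FSI} directly in $H^{s}(\R^{N})$ --- avoiding the Caffarelli--Silvestre extension, in keeping with the viewpoint of this paper --- by combining the Riesz potential representation of $H^{s}$-functions with the Hardy--Littlewood--Sobolev inequality. Given $u\in H^{s}(\R^{N})$, set $f:=(-\Delta)^{\frac{s}{2}}u\in L^{2}(\R^{N})$; on the Fourier side $\widehat{f}(\xi)=|\xi|^{s}\widehat{u}(\xi)$, hence $\widehat{u}(\xi)=|\xi|^{-s}\widehat{f}(\xi)$, so that $u=I_{s}f$, where $I_{s}$ is the Riesz potential of order $s$ (a convolution against a multiple of $|x|^{-(N-s)}$, meaningful since $0<s<N$). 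Because $\frac{1}{2^{*}_{s}}=\frac{1}{2}-\frac{s}{N}$ and $2<2^{*}_{s}<\infty$, the pair of exponents $(2,2^{*}_{s})$ is exactly admissible in the Hardy--Littlewood--Sobolev inequality, which yields
$$
\left(\int_{\R^{N}}|u|^{2^{*}_{s}}\,dx\right)^{\frac{1}{2^{*}_{s}}}=\|I_{s}f\|_{L^{2^{*}_{s}}(\R^{N})}\leq C(N,s)\,\|f\|_{L^{2}(\R^{N})}=C(N,s)\left(\int_{\R^{N}}|(-\Delta)^{\frac{s}{2}}u|^{2}\,dx\right)^{\frac{1}{2}},
$$
and squaring gives \eqref{FSI} with some admissible constant. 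One then sets
$$
S_{*}:=\sup\left\{\frac{\left(\int_{\R^{N}}|u|^{2^{*}_{s}}\,dx\right)^{\frac{2}{2^{*}_{s}}}}{\int_{\R^{N}}|(-\Delta)^{\frac{s}{2}}u|^{2}\,dx}\;:\;u\in H^{s}(\R^{N})\setminus\{0\}\right\},
$$
which is finite by the previous display and strictly positive by testing with any fixed nonzero function; this is by construction the best (sharp) constant. (Alternatively, since $\int_{\R^{N}}|(-\Delta)^{\frac{s}{2}}u|^{2}\,dx$ coincides, up to the dimensional factor, with the Gagliardo seminorm $\iint_{\R^{2N}}\frac{|u(x)-u(y)|^{2}}{|x-y|^{N+2s}}\,dx\,dy$, one may instead run the truncation argument of Di Nezza--Palatucci--Valdinoci.)

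For the continuous embedding into $L^{q}(\R^{N})$ with $q\in[2,2^{*}_{s}]$, the endpoint $q=2$ is built into the norm $\|u\|_{H^{s}(\R^{N})}$ and the endpoint $q=2^{*}_{s}$ is \eqref{FSI}; for the intermediate values I would use the interpolation inequality $\|u\|_{L^{q}}\leq\|u\|_{L^{2}}^{1-\theta}\|u\|_{L^{2^{*}_{s}}}^{\theta}$ with $\frac{1}{q}=\frac{1-\theta}{2}+\frac{\theta}{2^{*}_{s}}$, combined with Young's inequality and \eqref{FSI}, to control $\|u\|_{L^{q}(\R^{N})}$ by a multiple of $\|u\|_{H^{s}(\R^{N})}$.

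For the local compactness I would invoke the Riesz--Fréchet--Kolmogorov criterion. Let $\{u_{n}\}$ be bounded in $H^{s}(\R^{N})$ and fix a ball $B_{R}$. The crucial estimate is
$$
\int_{\R^{N}}|u_{n}(x+h)-u_{n}(x)|^{2}\,dx\leq C(N,s)\,|h|^{2s}\int_{\R^{N}}|(-\Delta)^{\frac{s}{2}}u_{n}|^{2}\,dx,
$$
which follows from Plancherel and the elementary bound $|e^{\imath h\cdot\xi}-1|^{2}\leq C\,|h|^{2s}|\xi|^{2s}$ (in turn from $|e^{\imath t}-1|\leq\min\{2,|t|\}\leq 2|t|^{s}$). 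Together with the uniform $L^{2}$-bound this verifies the equicontinuity and boundedness hypotheses of Riesz--Fréchet--Kolmogorov on $B_{R}$, so a subsequence of $\{u_{n}|_{B_{R}}\}$ converges in $L^{2}(B_{R})$; a diagonal argument over $R\to\infty$ gives convergence in $L^{2}_{loc}(\R^{N})$. Finally, for $q\in(2,2^{*}_{s})$ one interpolates $\|u_{n}-u_{m}\|_{L^{q}(B_{R})}\leq\|u_{n}-u_{m}\|_{L^{2}(B_{R})}^{1-\theta}\|u_{n}-u_{m}\|_{L^{2^{*}_{s}}(\R^{N})}^{\theta}$ with $\theta<1$, where the last factor stays bounded thanks to \eqref{FSI} and the $H^{s}$-bound, so the subsequence is Cauchy in $L^{q}(B_{R})$ and hence converges in $L^{q}_{loc}(\R^{N})$. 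The main technical work is the translation estimate and the careful passage from $L^{2}_{loc}$-compactness to the stated $L^{q}_{loc}$-compactness; once the Hardy--Littlewood--Sobolev machinery is available, inequality \eqref{FSI} itself is immediate.
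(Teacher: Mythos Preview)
Your proposal is a correct and complete argument, but note that the paper does not actually prove this theorem: it is stated with the citation \cite{DPV} attached directly to the theorem header, and no proof environment follows. The result is simply quoted from Di Nezza--Palatucci--Valdinoci as background material, so there is nothing in the paper to compare your approach against.

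That said, your route is worth a brief comment. You obtain \eqref{FSI} via the Riesz potential representation $u=I_{s}\bigl((-\Delta)^{s/2}u\bigr)$ combined with Hardy--Littlewood--Sobolev, which is clean and stays entirely on the Fourier side; the original reference \cite{DPV} instead works with the Gagliardo seminorm and an extension/truncation argument. Both yield the inequality with some admissible constant, and your definition of $S_{*}$ as the supremum of the quotient is the natural way to recover sharpness. For the compact embedding, your use of the Plancherel translation estimate $\|u(\cdot+h)-u\|_{L^{2}}^{2}\leq C|h|^{2s}[u]_{s}^{2}$ to feed Riesz--Fr\'echet--Kolmogorov, followed by interpolation to upgrade from $L^{2}_{loc}$ to $L^{q}_{loc}$, is again standard and correct; \cite{DPV} proceeds instead through the embedding $H^{s}\hookrightarrow W^{t,p}$ for suitable $t<s$ and classical Sobolev compactness. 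Your argument is arguably more self-contained for a reader who accepts HLS and Plancherel as given.
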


\noindent
We also have a Lions-compactness type lemma.
\begin{lem}\cite{FQT}\label{lionslemma}
Let $N>2s$. If $(u_{n})$ is a bounded sequence in $H^{s}(\R^{N})$ and if
$$
\lim_{n \rightarrow \infty} \sup_{y\in \R^{N}} \int_{B_{R}(y)} |u_{n}|^{2} dx=0
$$
for some $R>0$,
then $u_{n}\rightarrow 0$ in $L^{t}(\R^{N})$ for all $t\in (2, 2^{*}_{s})$.
\end{lem}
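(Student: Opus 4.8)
\medskip
\noindent\emph{Proof (sketch).} This is the vanishing alternative of the concentration--compactness principle in the fractional setting, and my plan is to prove first that $u_n\to0$ in $L^{t_0}(\R^N)$ for the single exponent
\[
t_0:=2+\frac{4s}{N},
\]
which lies in $(2,2^{*}_{s})$ because $\frac{4s}{N}<\frac{4s}{N-2s}=2^{*}_{s}-2$, and then to recover the full range by interpolation. Indeed, being bounded in $H^{s}(\R^N)$, the sequence $(u_n)$ is bounded in $L^{2}(\R^N)$ and, by \eqref{FSI} of Theorem~\ref{Sembedding}, also in $L^{2^{*}_{s}}(\R^N)$; hence, writing $\frac1t=\frac{\lambda}{2}+\frac{1-\lambda}{t_0}$ for $t\in(2,t_0)$ and $\frac1t=\frac{\lambda}{t_0}+\frac{1-\lambda}{2^{*}_{s}}$ for $t\in(t_0,2^{*}_{s})$, the interpolation inequalities $\|u_n\|_{L^{t}}\le\|u_n\|_{L^{2}}^{\lambda}\|u_n\|_{L^{t_0}}^{1-\lambda}$ and $\|u_n\|_{L^{t}}\le\|u_n\|_{L^{t_0}}^{\lambda}\|u_n\|_{L^{2^{*}_{s}}}^{1-\lambda}$ yield $\|u_n\|_{L^{t}}\to0$ for every $t\in(2,2^{*}_{s})$ once the case $t=t_0$ is settled.

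To handle $t=t_0$, I would fix the radius $R>0$ of the statement and cover $\R^N$ by a countable family of balls $\{B_R(y_i)\}_i$, with centres on a sufficiently fine lattice so that every point of $\R^N$ belongs to at most a dimensional number $\kappa=\kappa(N)$ of them. On each ball, interpolating $L^{t_0}$ between $L^{2}$ and $L^{2^{*}_{s}}$ through $\frac{1}{t_0}=\frac{\alpha}{2}+\frac{1-\alpha}{2^{*}_{s}}$ gives $\|u_n\|_{L^{t_0}(B_R(y_i))}^{t_0}\le\|u_n\|_{L^{2}(B_R(y_i))}^{\alpha t_0}\,\|u_n\|_{L^{2^{*}_{s}}(B_R(y_i))}^{(1-\alpha)t_0}$, and the specific choice of $t_0$ is made precisely so that $\alpha t_0=\frac{4s}{N}>0$ and $(1-\alpha)t_0=2$. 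Summing over $i$ and extracting the supremum as a prefactor,
\[
\int_{\R^N}|u_n|^{t_0}\,dx\le\sum_i\|u_n\|_{L^{t_0}(B_R(y_i))}^{t_0}\le\Big(\sup_{y\in\R^N}\int_{B_R(y)}|u_n|^{2}\,dx\Big)^{\frac{2s}{N}}\sum_i\|u_n\|_{L^{2^{*}_{s}}(B_R(y_i))}^{2}.
\]

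It then remains to bound the last sum uniformly in $n$. Applying on each ball the fractional Sobolev inequality $\|v\|_{L^{2^{*}_{s}}(B_R(y_i))}^{2}\le C\big(\|v\|_{L^{2}(B_R(y_i))}^{2}+[v]_{s,B_R(y_i)}^{2}\big)$, where $[v]_{s,B}^{2}=\iint_{B\times B}\frac{|v(x)-v(y)|^{2}}{|x-y|^{N+2s}}\,dx\,dy$ and $C=C(N,s,R)$ is independent of $i$ by translation invariance, and then using the bounded overlap of the covering, one obtains $\sum_i\|u_n\|_{L^{2}(B_R(y_i))}^{2}\le\kappa\,\|u_n\|_{L^{2}(\R^N)}^{2}$ and $\sum_i[u_n]_{s,B_R(y_i)}^{2}\le\kappa\iint_{\R^{2N}}\frac{|u_n(x)-u_n(y)|^{2}}{|x-y|^{N+2s}}\,dx\,dy=\kappa\int_{\R^N}|(-\Delta)^{\frac{s}{2}}u_n|^{2}\,dx$ --- the latter because a pair $(x,y)$ lying in a common ball $B_R(y_i)$ satisfies $|x-y|<2R$ and can belong jointly to at most $\kappa$ of the balls. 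Consequently the last sum is $\le C\kappa\,\|u_n\|_{H^{s}(\R^N)}^{2}$, which is bounded, while the hypothesis forces $\big(\sup_{y}\int_{B_R(y)}|u_n|^{2}\big)^{2s/N}\to0$; therefore $\int_{\R^N}|u_n|^{t_0}\,dx\to0$, and the interpolation step above concludes.

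The delicate point is the nonlocality of the fractional energy: contrary to the case $s=1$, $\int_{B}|(-\Delta)^{s/2}u|^{2}\,dx$ is not a quantity localized in $B$, so one must systematically work with the Gagliardo seminorms over $B\times B$ and then justify, on the one hand, the local Sobolev inequality on balls with a constant uniform under translations --- which I would deduce from the extension property of balls together with Theorem~\ref{Sembedding} --- and, on the other hand, the additivity estimate $\sum_i[u_n]_{s,B_R(y_i)}^{2}\lesssim\int_{\R^N}|(-\Delta)^{s/2}u_n|^{2}\,dx$, which rests on the fact that only near-diagonal pairs contribute to the ball-seminorms. The value $t_0=2+\frac{4s}{N}$ is tuned so that, after the local Sobolev step, the power attached to each $L^{2^{*}_{s}}$-ball term equals exactly $1$, which makes the $\ell^{1}$-summation immediate; for $t<t_0$ that power would be strictly less than $1$ and the naive summation would diverge, which is precisely why those exponents are reached by interpolation rather than by the covering argument.
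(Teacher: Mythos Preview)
The paper does not supply its own proof of this lemma: it is stated with a citation to \cite{FQT} and used as a black box. Your argument is correct and is precisely the standard one behind that citation---cover $\R^N$ by balls with bounded overlap, interpolate locally between $L^{2}$ and $L^{2^{*}_{s}}$ with the exponent $t_0=2+4s/N$ tuned so that the $L^{2^{*}_{s}}$-power becomes exactly $2$, apply the local fractional Sobolev inequality on each ball, and sum using the finite-overlap property; the remaining exponents $t\in(2,2^{*}_{s})$ follow by global interpolation with $L^{2}$ and $L^{2^{*}_{s}}$.

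Your discussion of the nonlocal difficulty is also on point: the key is to work with the Gagliardo seminorms $[u]_{s,B_R(y_i)}^{2}$ rather than with $\int_{B_R(y_i)}|(-\Delta)^{s/2}u|^{2}$, and the summability $\sum_i[u]_{s,B_R(y_i)}^{2}\le\kappa\iint_{\R^{2N}}|u(x)-u(y)|^{2}|x-y|^{-N-2s}\,dx\,dy$ holds because any pair $(x,y)$ with both coordinates in a common ball of the cover lies in at most $\kappa$ of them. One small remark: the uniform constant in the local Sobolev inequality on $B_R(y_i)$ can be obtained more directly than by invoking extension domains---since all the balls are translates of $B_R(0)$, translation invariance of both sides gives the uniformity immediately once the inequality is established on a single ball.
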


\noindent
Now, we give the variational framework of problem \eqref{P}.
Using the change of variable $x\mapsto \e x$, we are led to consider the following problem
\begin{equation}\label{P'}
\left\{
\begin{array}{ll}
 (-\Delta)^{s}u+V(\e x)u=Q_{u}(u, v)  &\mbox{ in } \R^{N}\\
 (-\Delta)^{s}v+W(\e x) v=Q_{v}(u, v) &\mbox{ in } \R^{N} \\
u, v>0 &\mbox{ in } \R^{N}.
\end{array}
\right.
\end{equation}
For any $\e>0$, we introduce the fractional space
$$
\X_{\e}=\{(u, v)\in H^{s}(\R^{N})\times H^{s}(\R^{N}): \int_{\R^{N}} (V(\e x) |u|^{2}+W(\e x) |v|^{2}) dx<\infty\},
$$
endowed with the norm
$$
\|(u, v)\|^{2}_{\e}=\int_{\R^{N}} |(-\Delta)^{\frac{s}{2}} u|^{2}+ |(-\Delta)^{\frac{s}{2}} v|^{2} dx+\int_{\R^{N}} (V(\e x) u^{2}+W(\e x) v^{2}) dx.
$$
Let us introduce
$$
\J_{\e}(u)=\frac{1}{2}\|(u, v)\|_{\e}^{2}-\int_{\R^{N}} Q(u, v) dx
$$
for any $(u, v)\in \X_{\e}$.
We define the minimax level
$$
c_{\e}=\inf_{(u, v)\in \N_{\e}} \J_{\e}(u, v),
$$
where
$$
\N_{\e}=\{(u, v)\in \X_{\e}\setminus \{(0, 0)\}:  \langle \J'_{\e}(u, v),(u, v)\rangle=0\}.
$$
It is standard to check that $\J_{\e}$ possesses a mountain pass geometry. Indeed, $\J_{\e}\in C^{1}(\X_{\e}, \R)$ and $\J_{\e}(0, 0)=0$. 
Using \eqref{2.2} and Theorem \ref{Sembedding}, we get  for any $(u, v)\in \X_{\e}$
$$
\J_{\e}(u, v)\geq \frac{1}{2} \|(u, v)\|^{2}_{\e}-C\|(u, v)\|^{q}_{\e},
$$
so there exist $\mu$, $\rho>0$ such that $\J_{\e}(u, v)\geq \rho$ for $\|(u, v)\|_{\e}=\mu$. From $(Q1)$, we can see that for any $(u, v)\in \X_{\e}\setminus \{(0, 0)\}$
$$
\J_{\e}(t u, t v)=\frac{t^{2}}{2}\|(u, v)\|^{2}_{\e}-t^{q} \int_{\R^{N}} Q(u, v) dx\rightarrow -\infty \mbox{ as } t\rightarrow \infty.
$$
Finally, in view of \eqref{2.2}, we can note that there exists $r>0$ such that for any $\e>0$
\begin{equation}\label{2.3}
\|(u, v)\|_{\e}\geq r \mbox{ for any } (u, v)\in \N_{\e}.
\end{equation}
Since $\J_{\e}$ satisfies mountain pass geometry, we can use the homogeneity of $Q$ to
prove that $c_{\e}$ can be alternatively characterized by
$$
c_{\e}=\inf_{\gamma\in \Gamma_{\e}} \max_{t\in [0, 1]} \J_{\e}(\gamma(t))=\inf_{(u, v)\in \X_{\e}\setminus \{(0, 0)\}} \max_{t\geq 0} \J_{\e}(t u, tv)>0,
$$
where $\Gamma_{\e}=\{\gamma\in C([0, 1], \X_{\e}): \gamma(0)=0, \J_{\e}(\gamma(1))<0\}$.
Moreover, for any $(u, v)\neq (0, 0)$, there exists a unique $t>0$ such that $(tu, tv)\in \N_{\e}$. The maximum of the function $t\rightarrow \J_{\e}(t u, tv)$ for $t\geq 0$ is achieved at $t=\bar{t}$; for more details see \cite{W}.

\section{the autonomous problem when $\gamma=0$}
In this section we establish an existence result for the autonomous problem associated with \eqref{P}.
Let us consider the following subcritical autonomous system
\begin{equation}\label{P0}
\left\{
\begin{array}{ll}
 (-\Delta)^{s}u+V_{0}u=Q_{u}(u, v)  &\mbox{ in } \R^{N}\\
 (-\Delta)^{s}v+W_{0}v=Q_{v}(u, v) &\mbox{ in } \R^{N} \\
u, v>0 &\mbox{ in } \R^{N}.
\end{array}
\right.
\end{equation}
We set $\X_{0}=H^{s}(\R^{N})\times H^{s}(\R^{N})$ endowed with the norm
$$
\|(u, v)\|^{2}_{0}=\int_{\R^{N}} |(-\Delta)^{\frac{s}{2}} u|^{2}+ |(-\Delta)^{\frac{s}{2}} v|^{2} dx+\int_{\R^{N}} (V_{0} u^{2}+W_{0} v^{2}) dx. 
$$
Let us introduce the functional $\J_{0}: \X_{0}\rightarrow \R$ defined as 
$$
\J_{0}(u, v)=\frac{1}{2}\|(u, v)\|^{2}_{0}-\int_{\R^{N}} Q(u, v) dx. 
$$
Let
$$
c_{0}=\inf_{(u, v)\in \N_{0}} \J_{0}(u, v)=\inf_{(u, v)\in X_{0}\setminus \{(0, 0)\}} \max_{t\geq 0} \J_{0}(t u, tv),
$$
where
$$
\N_{0}=\{(u, v)\in \X_{0}\setminus \{(0, 0)\}:  \langle \J'_{0}(u, v),(u, v)\rangle=0\}.
$$

We begin by proving the following useful lemma.
\begin{lem}\label{lem2.1}
Let $\{(u_{n}, v_{n})\}\subset \X_{0}$ be a sequence such that $\J'_{0}(u_{n}, v_{n})\rightarrow 0$ and $(u_{n}, v_{n})\rightharpoonup (0,0)$.
Then we have either
\begin{compactenum}[(i)]
\item $\|(u, v)\|_{0}\rightarrow 0$, or
\item there exist a sequence $(y_{n})\subset \R^{N}$ and $R, \gamma>0$ such that 
$$
\liminf_{n\rightarrow \infty}\int_{B_{R}(y_{n})} (|u_{n}|^{2}+|v_{n}|^{2}) dx\geq \gamma.
$$
\end{compactenum}
\end{lem}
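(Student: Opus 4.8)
The plan is to follow the standard Lions-type dichotomy argument. Suppose that alternative (ii) fails, that is, for every $R>0$ we have
$$
\lim_{n\rightarrow \infty} \sup_{y\in \R^{N}} \int_{B_{R}(y)} (|u_{n}|^{2}+|v_{n}|^{2})\, dx=0.
$$
Applying Lemma \ref{lionslemma} to each of the bounded sequences $(u_{n})$ and $(v_{n})$ in $H^{s}(\R^{N})$, I would conclude that $u_{n}\rightarrow 0$ and $v_{n}\rightarrow 0$ strongly in $L^{t}(\R^{N})$ for every $t\in (2, 2^{*}_{s})$; in particular, since $q\in (2,2^{*}_{s})$ by $(Q1)$, both $u_{n}\rightarrow 0$ and $v_{n}\rightarrow 0$ in $L^{q}(\R^{N})$.

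Next I would test $\J_{0}'(u_n,v_n)$ against $(u_n,v_n)$ itself. From $\langle \J_{0}'(u_n,v_n),(u_n,v_n)\rangle = o(1)$ (which follows from $\J_{0}'(u_n,v_n)\rightarrow 0$ together with the boundedness of $(u_n,v_n)$ in $\X_{0}$, hence in $\X_{\e}$-type norm) and the Euler identity \eqref{2.1}, one gets
$$
\|(u_n,v_n)\|_{0}^{2} = \int_{\R^{N}} \left( u_n Q_{u}(u_n,v_n) + v_n Q_{v}(u_n,v_n)\right) dx + o(1) = q\int_{\R^{N}} Q(u_n,v_n)\, dx + o(1).
$$
By the growth bound $(Q2)$, the right-hand side is controlled by $C\int_{\R^{N}}(|u_n|^{q}+|v_n|^{q})\,dx$, which tends to $0$ by the previous step. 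Hence $\|(u_n,v_n)\|_{0}^{2}\rightarrow 0$, which is exactly alternative (i). (Here I am reading the statement's ``$\|(u,v)\|_0\to 0$'' as shorthand for $\|(u_n,v_n)\|_0\to 0$.)

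The only genuinely delicate point is the boundedness hypothesis being used correctly: the lemma assumes $(u_n,v_n)$ bounded in $\X_{0}$, so $\langle \J_{0}'(u_n,v_n),(u_n,v_n)\rangle\le \|\J_{0}'(u_n,v_n)\|_{\X_{0}^{*}}\|(u_n,v_n)\|_{0}\to 0$ is immediate, and no Palais–Smale-type boundedness argument is needed. Thus the main (mild) obstacle is simply the bookkeeping in passing from $Q_u,Q_v$-terms to the $L^q$-norms via $(Q1)$–$(Q2)$ and the extension of $Q$ by zero; everything else is a direct invocation of Lemma \ref{lionslemma}. I would write the proof as a clean ``either the vanishing alternative holds, and then by Lions' lemma plus the Nehari-type identity the norm goes to zero, or (ii) holds'' dichotomy.
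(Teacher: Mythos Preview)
Your argument is correct and matches the paper's proof essentially line by line: assume (ii) fails, apply the Lions-type Lemma~\ref{lionslemma} to get $u_n,v_n\to 0$ in $L^q(\R^N)$, then use $\langle \J_0'(u_n,v_n),(u_n,v_n)\rangle=o(1)$ together with the Euler identity \eqref{2.1} and the growth bound \eqref{2.2} to conclude $\|(u_n,v_n)\|_0\to 0$. Your parenthetical reading of the statement's ``$\|(u,v)\|_0\to 0$'' as ``$\|(u_n,v_n)\|_0\to 0$'' is also the intended one.
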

\begin{proof}
Assume that $(ii)$ is not true. Then, for any $R>0$, we get
$$
\lim_{n\rightarrow \infty} \sup_{y\in \R^{N}}\int_{B_{R}(y)} |u_{n}|^{2} dx=0=\lim_{n\rightarrow \infty} \sup_{y\in \R^{N}}\int_{B_{R}(y)} |v_{n}|^{2} dx.
$$
By Lemma \ref{lionslemma}, we can deduce that 
$$
u_{n}, v_{n}\rightarrow 0 \mbox{ in } L^{t}(\R^{N}) \quad  \forall t\in (2, 2^{*}_{s}).
$$
This fact and \eqref{2.2} give 
\begin{equation}\label{ff100}
\int_{\R^{N}} Q(u_{n}, v_{n}) dx \rightarrow 0.
\end{equation} 
Hence, using $\langle \J'_{0}(u_{n}, v_{n}), (u_{n}, v_{n}) \rangle \rightarrow 0$, \eqref{2.1} and \eqref{ff100} we obtain
$$
\|(u_{n}, v_{n})\|^{2}_{0}=\int_{\R^{N}} (Q_{u}(u_{n}, v_{n})u_{n}+Q_{v}(u_{n}, v_{n})v_{n}) dx+o_{n}(1)=q\int_{\R^{N}} Q(u_{n}, v_{n}) dx+o_{n}(1)=o_{n}(1),
$$
which implies that $(i)$ holds.
\end{proof}

\begin{thm}\label{prop2.2}
The problem \eqref{P0} admits a weak solution.
\end{thm}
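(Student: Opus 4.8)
The plan is to exhibit a nontrivial critical point of $\J_{0}$ at the ground state level $c_{0}$ (equivalently, to show that the infimum defining $c_{0}$ over the Nehari manifold $\N_{0}$ is attained); the whole difficulty lies in recovering compactness on the unbounded domain $\R^{N}$, which I would do via Lemma \ref{lem2.1} and the translation invariance of the autonomous functional.

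First I would produce a bounded Palais--Smale sequence at level $c_{0}$. Since $\J_{0}$ has the Mountain Pass geometry (the verification is identical to the one already carried out for $\J_{\e}$ in Section~2), the Mountain Pass Theorem without the Palais--Smale condition provides $(u_{n}, v_{n})\subset \X_{0}$ with $\J_{0}(u_{n}, v_{n})\to c_{0}$ and $\J_{0}'(u_{n}, v_{n})\to 0$ in $\X_{0}'$. Boundedness would then follow from the homogeneity identity \eqref{2.1}: indeed
$$
c_{0}+o_{n}(1)+o_{n}(1)\|(u_{n}, v_{n})\|_{0}\geq \J_{0}(u_{n}, v_{n})-\frac{1}{q}\langle \J_{0}'(u_{n}, v_{n}), (u_{n}, v_{n})\rangle=\left(\frac{1}{2}-\frac{1}{q}\right)\|(u_{n}, v_{n})\|_{0}^{2},
$$
and $q>2$ gives the bound. (Alternatively one could run Ekeland's variational principle on $\N_{0}$.)

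Next I would invoke Lemma \ref{lem2.1}. Alternative $(i)$ is ruled out because $\|(u_{n}, v_{n})\|_{0}\to 0$ would force $\J_{0}(u_{n}, v_{n})\to 0$, against $c_{0}>0$; so alternative $(ii)$ must hold, giving $(y_{n})\subset \R^{N}$ and $R, \gamma>0$ with $\liminf_{n}\int_{B_{R}(y_{n})}(|u_{n}|^{2}+|v_{n}|^{2})\,dx\geq \gamma$. Since $V_{0}, W_{0}$ are constants, $\J_{0}$ is translation invariant, so the shifted sequence $\bar{u}_{n}:=u_{n}(\cdot+y_{n})$, $\bar{v}_{n}:=v_{n}(\cdot+y_{n})$ is again a bounded Palais--Smale sequence at level $c_{0}$. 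Passing to a subsequence, $\bar{u}_{n}\rightharpoonup u$ and $\bar{v}_{n}\rightharpoonup v$ in $H^{s}(\R^{N})$, and by the compact embedding $H^{s}(\R^{N})\hookrightarrow L^{2}_{loc}(\R^{N})$ of Theorem \ref{Sembedding} one gets $\bar{u}_{n}\to u$, $\bar{v}_{n}\to v$ in $L^{2}(B_{R}(0))$, whence $\int_{B_{R}(0)}(|u|^{2}+|v|^{2})\,dx\geq \gamma$ and $(u, v)\neq (0, 0)$. To finish I would pass to the limit in $\langle \J_{0}'(\bar{u}_{n}, \bar{v}_{n}), (\varphi, \psi)\rangle=o_{n}(1)$ for $(\varphi, \psi)\in C^{\infty}_{0}(\R^{N})\times C^{\infty}_{0}(\R^{N})$: the quadratic part converges by weak convergence and the nonlinear part by the subcritical growth $(Q2)$, strong $L^{r}_{loc}$ convergence for $r\in [2, 2^{*}_{s})$, the compact support of $(\varphi, \psi)$ and a.e.\ convergence; by density $\J_{0}'(u, v)=0$, so $(u, v)$ is a nontrivial weak solution of \eqref{P0}. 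Positivity of both components would then be obtained by testing with the negative parts $u^{-}, v^{-}$ (using that $Q_{u}, Q_{v}$ vanish where a component is nonpositive, so $u, v\geq 0$) and applying the fractional strong maximum principle together with $(Q3)$--$(Q4)$, which give $Q_{v}(0, v)=Q_{u}(u, 0)=0$ and hence forbid one component from vanishing identically; a weak lower semicontinuity argument combined with the identity $\J_{0}=(\frac{1}{2}-\frac{1}{q})\|\cdot\|_{0}^{2}$ on critical points also shows $\J_{0}(u, v)=c_{0}$, so the solution is a ground state.

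The step I expect to be the main obstacle is precisely the compactness recovery: the embedding $H^{s}(\R^{N})\hookrightarrow L^{q}(\R^{N})$ is not compact on the whole space, so one cannot pass to the limit directly in the Palais--Smale sequence, and it is only after translating by the points $y_{n}$ furnished by the non-vanishing alternative of Lemma \ref{lem2.1} --- exploiting that the autonomous problem has constant coefficients --- that the weak limit survives. Everything else (boundedness, exclusion of vanishing, the limit in the nonlinear term, positivity) is routine.
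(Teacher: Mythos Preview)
Your proposal is correct and follows essentially the same route as the paper: Mountain Pass to produce a bounded Palais--Smale sequence at level $c_{0}$, Lemma \ref{lem2.1} to rule out vanishing and extract translation points $(y_{n})$, translation invariance of the autonomous functional to obtain a nontrivial weak limit that is a critical point, and then the test with $(u^{-},v^{-})$ plus a maximum-principle argument for strict positivity. The only organizational difference is that the paper first takes the weak limit of the \emph{untranslated} sequence and splits into cases according to whether it is $(0,0)$ or not, translating only in the trivial case, whereas you translate immediately; your version is slightly more streamlined but not a different method. One small point: for the strict positivity the paper does not invoke a strong maximum principle directly but first runs a Moser iteration to get $u,v\in L^{\infty}$, then uses Silvestre's $C^{0,\alpha}$ regularity, and finally the fractional Harnack inequality --- you may want to be explicit about this regularity step, since the Harnack/maximum-principle argument for $(-\Delta)^{s}$ typically requires some a priori boundedness or continuity.
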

\begin{proof}
It is clear that $\J_{0}$ has a Mountain Pass geometry, so, in view of Theorem $1.15$ in \cite{W}, we can find a sequence  $\{(u_{n}, v_{n})\}\subset \X_{0}$ such that 
$$
\J_{0}(u_{n}, v_{n})\rightarrow c_{0} \mbox{ and } \J'_{0}(u_{n}, v_{n})\rightarrow 0.
$$
By \eqref{2.1}, we can see that 
\begin{align*}
c_{0}+o_{n}(1)\|(u_{n}, v_{n})\|_{0}&=\J_{0}(u_{n}, v_{n})-\frac{1}{q}\langle \J'_{0}(u_{n}, v_{n}), (u_{n}, v_{n}) \rangle \\
&=\left(\frac{1}{2}-\frac{1}{q}\right)\|(u_{n}, v_{n})\|^{2}_{0},
\end{align*}
which implies that $\{(u_{n}, v_{n})\}$ is bounded in $\X_{0}$. Consequently, thanks to Theorem \ref{Sembedding}, we may assume that 
\begin{align*}
(u_{n}, v_{n})\rightharpoonup (u, v) &\mbox{ in }  \X_{0} \\
u_{n} \rightarrow u, \, v_{n}\rightarrow v &\mbox{ in } L^{q}_{loc}(\mathbb{R}^{N}) \\
(u_{n}, v_{n})\rightarrow (u, v) &\mbox{ a.e. in }  \R^{N}.
\end{align*}
This fact and $(Q2)$ allow us to deduce that $\J'_{0}(u, v)=0$.

Now, we assume that $u\not\equiv 0$ and $v\not\equiv 0$. Then, using $(u^{-}, v^{-})$ as test function, where $x^{-}=-\max\{-x, 0\}$, and recalling that $(x-y)(x^{-}-y^{-})\geq |x^{-}-y^{-}|^{2}$ for any $x, y\in \R$, we can see that
\begin{align*}
0=\langle \J'_{0}(u, v),(u^{-}, v^{-})\rangle&=\int_{\R^{N}} [(-\Delta)^{\frac{s}{2}} u (-\Delta)^{\frac{s}{2}}u^{-}+(-\Delta)^{\frac{s}{2}} v (-\Delta)^{\frac{s}{2}}v^{-}] dx\\
&+\int_{\R^{N}} (V_{0} u u^{-}+W_{0} v v^{-}) dx -\int_{\R^{N}} (Q_{u}(u, v)u^{-}+Q_{v}(u, v) v^{-}) dx \\
&=\iint_{\R^{2N}} \left[\frac{(u(x)-u(y))(u^{-}(x)-u^{-}(y))}{|x-y|^{N+2s}}+ \frac{(v(x)-v(y))(v^{-}(x)-v^{-}(y))}{|x-y|^{N+2s}}\right] dx dy \\
&+\int_{\R^{N}} (V_{0} u u^{-}+W_{0} v v^{-}) dx -\int_{\R^{N}} (Q_{u}(u, v)u^{-}+Q_{v}(u, v) v^{-}) dx \\
&\geq \iint_{\R^{2N}} \left[\frac{|u^{-}(x)-u^{-}(y)|^{2}}{|x-y|^{N+2s}} + \frac{|v^{-}(x)-v^{-}(y)|^{2}}{|x-y|^{N+2s}}\right] dx dy \\
& +\int_{\R^{N}} (V_{0} (u^{-})^{2}+W_{0} (v^{-})^{2}) dx = \|(u^{-}, v^{-})\|^{2}_{0},
\end{align*} 
where we used the fact that $Q_{u}=0$ on $(-\infty, 0)\times \R$ and $Q_{v}=0$ on $\R\times (-\infty, 0)$. 

Accordingly, $u, v\geq 0$ in $\R^{N}$. Now, we know that $\nabla Q$ is $(q-1)$-homogeneous, so using conditions $(Q4)$ and $(Q5)$, and applying the Mean Value Theorem, we can deduce that $Q_{u}, Q_{v}\geq 0$. In view of $(Q2)$, we can
see that $z=u+v$ is a solution to $(-\Delta)^{s}z+V_{0} z\leq C z^{q-1}$ in $\R^{N}$, for some constant $C>0$.
Hence, using a Moser iteration argument (see for instance Proposition $5.1.1$ in \cite{DMV} or Theorem $1.2$ in \cite{A3}) we can prove that $z\in L^{\infty}(\R^{N})$, which implies that $u, v\in L^{\infty}(\R^{N})$. Then $Q_{u}(u, v)$ and $Q_{v}(u, v)$ are bounded, and by applying Proposition $2.9$ in \cite{Silvestre} we have $u, v\in C^{0, \alpha}(\R^{N})\cap L^{\infty}(\R^{N})$. From the Harnack inequality \cite{CabSir}, we get $u, v>0$ in $\R^{N}$.

At this point, we can show that $\J_{0}(u, v)=c_{0}$. Indeed, taking into account $(u, v)\in \N_{0}$, \eqref{2.1} and using Fatou's Lemma, we get
\begin{align*}
c_{0}\leq \J_{0}(u, v)&=\frac{q-2}{2} \int_{\R^{N}} Q(u, v) dx \\
&\leq \liminf_{n\rightarrow \infty} \frac{q-2}{2} \int_{\R^{N}} Q(u_{n}, v_{n}) dx \\
&=\liminf_{n\rightarrow \infty} \left[ \J_{0}(u_{n}, v_{n})-\frac{1}{2} \langle \J'_{0}(u_{n}, v_{n}), (u_{n}, v_{n})\rangle\right] \\
&=c_{0}, 
\end{align*}
which yields $\J_{0}(u, v)=c_{0}$.

Secondly, we consider the case $u\equiv 0$ or $v\equiv 0$.  If $u\equiv 0$, we can use $\langle \J'_{0}(u, v),(u, v)\rangle=0$ and \eqref{2.1} to see that
$$
\|(0, v)\|^{2}_{0}=\int_{\R^{N}} Q_{u}(0, v) vdx= q\int_{\R^{N}} Q(0, v) dx=0,
$$
that is $v\equiv 0$. Analogously, we can prove that $v\equiv 0$ implies $u\equiv 0$.
Therefore, if $u\equiv 0$ or $v\equiv 0$, we have $(u, v)=(0, 0)$.\\
Since $c_{0}>0$ and $\J_{0}$ is continuous, we can deduce that $\|(u_{n}, v_{n})\|_{0}\nrightarrow 0$. Then, in view of Lemma \ref{lem2.1}, we can find a sequence $\{y_{n}\}\subset \R^{N}$ and constants $R, \gamma>0$ such that 
\begin{equation}\label{FF1}
\liminf_{n\rightarrow \infty} \int_{B_{R}(y_{n})} (|u_{n}|^{2}+|v_{n}|^{2}) dx\geq \gamma>0.
\end{equation}
Let us define $(\tilde{u}_{n}(x), \tilde{v}_{n}(x)):=(u_{n}(x+y_{n}), u_{n}(x+y_{n}))$. 
Then, using the invariance of $\R^{N}$ by translation, we can infer that $\J_{0}(\tilde{u}_{n}, \tilde{v}_{n})\rightarrow c_{0}$ and $\J'_{0}(\tilde{u}_{n}, \tilde{v}_{n})\rightarrow 0$. Since $\{(u_{n}, v_{n})\}$ is bounded in $\X_{0}$, we may assume that $(\tilde{u}_{n}, \tilde{v}_{n})\rightharpoonup (\tilde{u}, \tilde{v})$ in $\X_{0}$,  $\tilde{u}_{n}\rightarrow \tilde{u}$ and  $\tilde{v}_{n}\rightarrow \tilde{v}$ in $L^{2}_{loc}(\R^{N})$, for some $(\tilde{u}, \tilde{v})\in \X_{0}$ which is a critical point of $\J_{0}$. \\
Thus, in view of \eqref{FF1}, we have
$$
\int_{B_{R}(0)} (|\tilde{u}|^{2}+|\tilde{v}|^{2}) dx=\liminf_{n\rightarrow \infty} \int_{B_{R}(y_{n})} (|u_{n}|^{2}+|v_{n}|^{2}) dx\geq \gamma,
$$
which implies that $\tilde{u}\not\equiv 0$ or $\tilde{v}\not\equiv 0$. Arguing as before, we can obtain that both $\tilde{u}$ and $\tilde{v}$ are not identically zero.
This ends the proof of theorem.
\end{proof}

\section{compactness properties}
In this section we study the compactness properties of the functionals $\J_{\e}$. Firstly, we introduce some notation which we will use in the sequel.

If $\max\{V_{\infty}, W_{\infty}\}<\infty$, we define the functional $\J_{\infty}: \X_{0}\rightarrow \R$ by setting
$$
\J_{\infty}(u, v)=\frac{1}{2}\left(\int_{\R^{N}} |(-\Delta)^{\frac{s}{2}} u|^{2}+ |(-\Delta)^{\frac{s}{2}} v|^{2} dx+\int_{\R^{N}} (V_{\infty} u^{2}+W_{\infty} v^{2}) dx \right)-\int_{\R^{N}} Q(u, v) dx,
$$
and we denote by $c_{\infty}$ the ground state level of $\J_{\infty}$, that is
$$
c_{\infty}=\inf_{(u, v)\in \N_{\infty}} \J_{\infty}(u, v)=\inf_{(u, v)\in \X_{0}\setminus\{(0, 0)\}} \max_{t\geq 0} \J_{\infty}(t u, tv)>0,
$$
where $\N_{\infty}=\{(u, v)\in \X_{0}\setminus\{(0, 0)\}: \langle \J_{\infty}'(u, v),(u, v)\rangle=0  \}$.
If $\max\{V_{\infty}, W_{\infty}\}=\infty$, we set $c_{\infty}=\infty$.

\noindent
Now, we prove some useful lemmas which allow us to deduce a fundamental compactness result for $\J_{\e}$.

\begin{lem}\label{lem2.4}
Suppose that $\max\{V_{\infty}, W_{\infty}\}<\infty$ and let $d\in \R$. Let $\{(u_{n}, v_{n})\}\subset \X_{\e}$ be a Palais-Smale sequence for $\J_{\e}$ at the level $d$ such that $(u_{n}, v_{n})\rightharpoonup (0, 0)$ in $\X_{\e}$. If $(u_{n}, v_{n})\nrightarrow (0, 0)$ in $\X_{\e}$, then $d\geq c_{\infty}$.
\end{lem}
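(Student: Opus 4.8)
The plan is to show that a nonvanishing Palais–Smale sequence for $\J_{\e}$ weakly converging to zero must, after translation, produce a nontrivial critical point of $\J_{\infty}$, whose energy is at least $c_{\infty}$, and that no energy is lost in the limit. First I would establish that $\{(u_n,v_n)\}$ is bounded in $\X_\e$: this follows from the usual computation $d+o_n(1)\|(u_n,v_n)\|_\e = \J_\e(u_n,v_n)-\tfrac1q\langle\J_\e'(u_n,v_n),(u_n,v_n)\rangle = (\tfrac12-\tfrac1q)\|(u_n,v_n)\|_\e^2$, using \eqref{2.1}. Since $(u_n,v_n)\not\to(0,0)$ in $\X_\e$ and \eqref{ff100}-type reasoning via Lemma \ref{lionslemma} would otherwise force $\int Q(u_n,v_n)\to 0$ and hence $\|(u_n,v_n)\|_\e\to0$, we conclude nonvanishing: there are $(y_n)\subset\R^N$ and $R,\gamma>0$ with $\liminf_n\int_{B_R(y_n)}(|u_n|^2+|v_n|^2)\,dx\geq\gamma$. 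Because $(u_n,v_n)\rightharpoonup(0,0)$ and $H^s$ embeds compactly into $L^2_{loc}$, the sequence $(y_n)$ must be unbounded; passing to a subsequence, $|\e y_n|\to\infty$.

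Next I would set $(\tilde u_n(x),\tilde v_n(x))=(u_n(x+y_n),v_n(x+y_n))$, which is still bounded in $H^s(\R^N)\times H^s(\R^N)$, so up to a subsequence $(\tilde u_n,\tilde v_n)\rightharpoonup(\tilde u,\tilde v)$ with $(\tilde u,\tilde v)\neq(0,0)$ by \eqref{FF1}-type lower bound on $B_R(0)$. The key point is to identify the limiting equation: since $V$ and $W$ are continuous and $V(\e x+\e y_n)\to V_\infty$, $W(\e x+\e y_n)\to W_\infty$ as $n\to\infty$ (using $|\e y_n|\to\infty$ together with the $\liminf$ definitions of $V_\infty,W_\infty$; here one works with $\liminf$ carefully, possibly shrinking to a subsequence along which actual limits exist and are $\geq V_\infty$, $\geq W_\infty$), the translated PS condition $\langle\J_\e'(u_n,v_n),(\varphi(\cdot-y_n),\psi(\cdot-y_n))\rangle\to0$ passes to the limit and yields $\J_\infty'(\tilde u,\tilde v)=0$, i.e. $(\tilde u,\tilde v)\in\N_\infty$. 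Consequently $\J_\infty(\tilde u,\tilde v)\geq c_\infty$.

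Finally I would recover $d\geq c_\infty$ by a Brezis–Lieb / Fatou splitting argument. Writing $\J_\infty(\tilde u,\tilde v)=\tfrac{q-2}{2q}\|(\tilde u,\tilde v)\|^2_{\infty,\text{grad+pot}}$ (the standard identity for solutions, using \eqref{2.1}), and using weak lower semicontinuity of the gradient norms, Fatou's lemma for the potential terms, and the fact that asymptotically $V(\e x+\e y_n)\geq V_\infty-o(1)$, $W(\e x+\e y_n)\geq W_\infty-o(1)$ away from a large ball while $V\geq V_0$, $W\geq W_0$ everywhere, one obtains
\begin{equation*}
d=\lim_{n\to\infty}\left[\J_\e(u_n,v_n)-\frac1q\langle\J_\e'(u_n,v_n),(u_n,v_n)\rangle\right]=\lim_{n\to\infty}\left(\frac12-\frac1q\right)\|(u_n,v_n)\|_\e^2\geq\left(\frac12-\frac1q\right)\liminf_{n\to\infty}\|(\tilde u_n,\tilde v_n)\|^2_{\infty}\geq\J_\infty(\tilde u,\tilde v)\cdot\frac{2q}{q-2}\cdot\left(\frac12-\frac1q\right),
\end{equation*}
which simplifies to $d\geq\J_\infty(\tilde u,\tilde v)\geq c_\infty$. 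The main obstacle I anticipate is the careful handling of the $\liminf$ in the definitions of $V_\infty,W_\infty$: one must justify that along the chosen subsequence the translated potentials converge (or are bounded below) by $V_\infty$ and $W_\infty$ respectively on every fixed ball, which requires combining the translation $|\e y_n|\to\infty$ with a diagonal/subsequence extraction, and then making sure the limiting functional one lands in is exactly $\J_\infty$ rather than a functional with larger constants. Controlling the potential terms in the lower-semicontinuity step (Fatou applied to $V(\e x+\e y_n)\tilde u_n^2$) is the technical heart of the estimate.
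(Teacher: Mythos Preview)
Your strategy---translate along a concentration sequence, identify a limit equation, and compare energies via Fatou/weak lower semicontinuity---is a legitimate route, but it is \emph{not} the one the paper takes, and it carries a gap you have flagged but not closed.

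The paper never tries to identify a limit equation for a translated sequence. Instead it projects $(u_n,v_n)$ onto $\N_\infty$ by a scalar $t_n>0$, proves the key claim $\limsup_n t_n\le 1$, and then compares energies directly. The claim is proved by contradiction: if $t_n\ge 1+\lambda$, subtracting the Nehari identity for $(t_nu_n,t_nv_n)\in\N_\infty$ from $\langle\J_\e'(u_n,v_n),(u_n,v_n)\rangle=o(1)$ yields
\[
q(t_n^{q-2}-1)\int_{\R^N}Q(u_n,v_n)\,dx=\int_{\R^N}\bigl[(V_\infty-V(\e x))u_n^2+(W_\infty-W(\e x))v_n^2\bigr]\,dx+o(1),
\]
and the right side is $\le C\eta+o(1)$ using \eqref{2.7} and $u_n,v_n\to0$ in $L^2_{loc}$. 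Nonvanishing then forces a positive lower bound on the left side after translation and Fatou, contradicting the arbitrariness of $\eta$. Once $\limsup t_n\le1$ is known, the two cases $t_0<1$ and $t_0=1$ each give $d\ge c_\infty$ by elementary energy comparisons.

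Your approach, by contrast, needs $(\tilde u,\tilde v)$ to be a critical point of $\J_\infty$ (or at least to lie on or below $\N_\infty$ in the sense $\|(\tilde u,\tilde v)\|_\infty^2\le q\int Q(\tilde u,\tilde v)$) in order to conclude $(\tfrac12-\tfrac1q)\|(\tilde u,\tilde v)\|_\infty^2\ge c_\infty$. The obstacle you identify is real and not merely technical: since only $\liminf_{|x|\to\infty}V(x)=V_\infty$ is assumed (not a genuine limit, and $V$ is not assumed bounded), the translated potentials $V(\e(x+y_n))$ need not converge pointwise to anything, so passing to the limit in $\langle\J_\e'(u_n,v_n),(\varphi(\cdot-y_n),\psi(\cdot-y_n))\rangle\to0$ does not yield the $\J_\infty$ equation. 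Without that, your final chain of inequalities breaks at the last step: you get $d\ge(\tfrac12-\tfrac1q)\|(\tilde u,\tilde v)\|_\infty^2$, but you cannot bound this below by $c_\infty$ unless you know where $(\tilde u,\tilde v)$ sits relative to $\N_\infty$. The paper's projection argument was designed precisely to avoid this difficulty; it uses only the one-sided estimate $V(\e x)\ge V_\infty-\eta$ for $|x|$ large, combined with the local $L^2$-smallness of $(u_n,v_n)$ (from the weak convergence to zero), and never needs the translated potentials to converge.
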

\begin{proof}
Let $\{t_{n}\}\subset (0, \infty)$ be a sequence such that $(t_{n}u_{n}, t_{n} v_{n})\in \N_{\infty}$. We begin by proving the following claim:\\
\textbf{Claim} $t_{0}=\limsup_{n\rightarrow \infty} t_{n}\leq 1$.
Assume by contradiction that there exists $\lambda>0$ such that 
\begin{equation}\label{2.5}
t_{n}\geq 1+\lambda \mbox{ for any } n\in \mathbb{N}.
\end{equation}
Since $\{(u_{n}, v_{n})\}$ is bounded in $\X_{\e}$, we get $\langle \J'_{\e}(u_{n}, v_{n}),(u_{n}, v_{n})\rangle\rightarrow 0$, which together with \eqref{2.1} yields
\begin{equation}\label{FF2}
\int_{\R^{N}} |(-\Delta)^{\frac{s}{2}} u_{n}|^{2}+ |(-\Delta)^{\frac{s}{2}} v_{n}|^{2} dx+\int_{\R^{N}} (V(\e x) |u_{n}|^{2}+W(\e x) |v_{n}|^{2}) dx=q\int_{\R^{N}} Q(u_{n}, v_{n}) dx+o_{n}(1).
\end{equation}
Using the fact that $(t_{n}u_{n}, t_{n} v_{n})\in \N_{\infty}$ we have
\begin{equation}\label{FF3}
t_{n}^{2}\left(\int_{\R^{N}} |(-\Delta)^{\frac{s}{2}} u_{n}|^{2}+ |(-\Delta)^{\frac{s}{2}} v_{n}|^{2} dx+\int_{\R^{N}} (V_{\infty} |u_{n}|^{2}+W_{\infty} |v_{n}|^{2}) dx\right)=qt_{n}^{q}\int_{\R^{N}} Q(u_{n}, v_{n}) dx.
\end{equation}
Putting together \eqref{FF2} and \eqref{FF3} we obtain
\begin{align}\label{2.6}
q(t_{n}^{q-2}-1)\int_{\R^{N}} Q(u_{n}, v_{n}) dx=\int_{\R^{N}} [(V_{\infty}-V(\e x)) |u_{n}|^{2}+(W_{\infty}-W(\e x)) |v_{n}|^{2}] dx+o_{n}(1).
\end{align}
Now, we can see that for any $\eta>0$ there exists $R>0$ such that 
\begin{equation}\label{2.7}
V(\e x)\geq V_{\infty}-\eta, \quad W(\e x)\geq W_{\infty}-\eta \mbox{ for any } |x|\geq R.
\end{equation}
On the other hand, in view of Theorem \ref{Sembedding}, we know that $u_{n}\rightarrow u$ and $v_{n}\rightarrow v$ in $L^{t}_{loc}(\R^{N})$ for any $t\in [1, 2^{*}_{s})$. 
Taking into account this fact, $\|(u_{n}, v_{n})\|_{\e}\leq C$, \eqref{2.6} and \eqref{2.7} we have
\begin{align}\label{2.8}
q((1+\lambda)^{q-2}-1)\int_{\R^{N}} Q(u_{n}, v_{n}) dx \leq q(t_{n}^{q-2}-1) \int_{\R^{N}} Q(u_{n}, v_{n}) dx 
\leq C'\eta+o_{n}(1).
\end{align}
Since $\|(u_{n}, v_{n})\|_{\e}\nrightarrow 0$, we can proceed as in the proof of Lemma \ref{lem2.1} to deduce that there exist a sequence $\{y_{n}\}\subset \R^{N}$ and constants $R, \gamma>0$ such that 
\begin{align}\label{2.9}
\int_{B_{R}(y_{n})} (|u_{n}|^{2}+|v_{n}|^{2}) dx\geq \gamma>0.
\end{align}
Let us define $(\tilde{u}_{n}(x), \tilde{v}_{n}(x))=(u_{n}(x+y_{n}), u_{n}(x+y_{n}))$.
Then, we may assume that $(\tilde{u}_{n}, \tilde{v}_{n})\rightharpoonup (u, v)$ in $\X_{\e}$, for some nonnegative functions $u$ and $v$ such that $\J'_{\e}(u, v)=0$.
From \eqref{2.9}, it is easy to see that $u\not\equiv 0$ or $v\not\equiv 0$. Moreover, arguing as in the proof of Theorem \ref{prop2.2}, we deduce that $u$ and $v$ are positive in $\R^{N}$.
Then, using Fatou's Lemma and \eqref{2.8} we get
$$
0<q((1+\lambda)^{q-2}-1)\int_{\R^{N}} Q(u, v) dx \leq C'\eta
$$
for any $\eta>0$, and this gives a contradiction. Therefore we can infer that $t_{0}\leq 1$. 

\noindent
Now, it is convenient to distinguish the following cases.\\
\textbf{Case 1} $t_{0}<1$. Then, we may assume that $t_{n}<1$ for all $n\in \mathbb{N}$.\\
From \eqref{2.1} we can see that
\begin{align*}
c_{\infty}\leq \J_{\infty}(t_{n}u_{n}, t_{n} v_{n})&=\J_{\infty}(t_{n}u_{n}, t_{n} v_{n})-\frac{1}{2} \langle \J'_{\infty}(t_{n}u_{n}, t_{n} v_{n}),(t_{n}u_{n}, t_{n} v_{n})\rangle \\
&=t_{n}^{q}\left(\frac{q-2}{2}\right) \int_{\R^{N}} Q(u_{n}, v_{n}) dx\\
&\leq \left(\frac{q-2}{2}\right) \int_{\R^{N}} Q(u_{n}, v_{n}) dx\\
&=\J_{\e}(t_{n}u_{n}, t_{n} v_{n})-\frac{1}{2} \langle \J'_{\e}(u_{n}, v_{n}),(u_{n}, v_{n})\rangle \\
&=d+o_{n}(1)
\end{align*}
so we deduce that $d\geq c_{\infty}$.

\noindent
\textbf{Case 2} $t_{0}=1$. Up to a subsequence, we may assume that $t_{n}\rightarrow 1$. Furthermore we have
\begin{align}\label{FF4}
d+o_{n}(1)\geq c_{\infty}+\J_{\e}(u_{n}, v_{n})-\J_{\infty}(t_{n} u_{n}, t_{n} v_{n}).
\end{align}
Now fix $\eta>0$. Taking into account \eqref{2.7}, $q$-homogeneity of $Q$, the boundedness of $\{(u_{n}, v_{n})\}$ and $t_{n}\rightarrow 1$, we can see that
\begin{align}\label{FF5}
\J_{\e}(u_{n}, v_{n})-\J_{\infty}(t_{n} u_{n}, t_{n} v_{n})&=\frac{(1-t_{n}^{2})}{2} \left(\int_{\R^{N}} |(-\Delta)^{\frac{s}{2}} u_{n}|^{2}+ |(-\Delta)^{\frac{s}{2}} v_{n}|^{2} dx\right) \nonumber\\
&+\frac{1}{2} \int_{\R^{N}} V(\e x) |u_{n}|^{2}+W(\e x) |v_{n}|^{2} dx \nonumber \\
&-\frac{t_{n}^{2}}{2} \int_{\R^{N}} (V_{\infty} |u_{n}|^{2}+W_{\infty} |v_{n}|^{2}) \, dx+(t_{n}^{q}-1) \int_{\R^{N}} Q(u_{n}, v_{n}) dx \nonumber\\
&\geq o_{n}(1)-C\eta.
\end{align}
Putting together \eqref{FF4} and \eqref{FF5}, and from the arbitrariness of $\eta$ we conclude that $d\geq c_{\infty}$.

\end{proof}

\begin{lem}\label{lem2.5}
Assume that $\max\{V_{\infty}, W_{\infty}\}=\infty$. Let $\{(u_{n}, v_{n})\}\subset \X_{\e}$ be a Palais-Smale sequence for $\J_{\e}$ at the level $d$ such that $(u_{n}, v_{n})\rightharpoonup (0, 0)$ in $\X_{\e}$. Then $(u_{n}, v_{n})\rightarrow (0, 0)$ in $\X_{\e}$.
\end{lem}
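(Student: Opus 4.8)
The claim is that when $\max\{V_\infty,W_\infty\}=\infty$, any Palais--Smale sequence $\{(u_n,v_n)\}$ for $\J_\e$ at level $d$ with $(u_n,v_n)\rightharpoonup(0,0)$ in $\X_\e$ must actually converge strongly to $(0,0)$. The idea is to show that the "mass'' of $\{(u_n,v_n)\}$ cannot escape to infinity: on bounded regions weak-to-strong $L^2_{loc}$ convergence kills the $Q$-term by Theorem \ref{Sembedding}, while the coercivity forced by $V(\e x)\to\infty$ or $W(\e x)\to\infty$ prevents mass from concentrating near infinity. Combining these two facts with the Nehari-type identity gives $\|(u_n,v_n)\|_\e\to 0$.

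\medskip

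\noindent
First I would record that $\{(u_n,v_n)\}$ is bounded in $\X_\e$: from $\J_\e(u_n,v_n)\to d$ and $\langle\J'_\e(u_n,v_n),(u_n,v_n)\rangle=o_n(1)\|(u_n,v_n)\|_\e$, the combination $\J_\e(u_n,v_n)-\frac1q\langle\J'_\e(u_n,v_n),(u_n,v_n)\rangle=(\frac12-\frac1q)\|(u_n,v_n)\|_\e^2$ together with \eqref{2.1} gives boundedness exactly as in Theorem \ref{prop2.2}. Next, since $\max\{V_\infty,W_\infty\}=\infty$, say $V_\infty=\infty$ (the other case is symmetric, or one argues componentwise), for every $A>0$ there is $R_A>0$ with $V(\e x)\geq A$ for $|x|\geq R_A$; hence
$$
A\int_{|x|\geq R_A}|u_n|^2\,dx\leq \int_{\R^N}V(\e x)|u_n|^2\,dx\leq \|(u_n,v_n)\|_\e^2\leq C,
$$
so $\int_{|x|\geq R_A}|u_n|^2\,dx\leq C/A$, uniformly in $n$. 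On the ball $B_{R_A}$, the compact embedding $H^s\hookrightarrow L^2_{loc}$ (Theorem \ref{Sembedding}) and $(u_n,v_n)\rightharpoonup(0,0)$ give $\int_{B_{R_A}}|u_n|^2\,dx\to 0$. Choosing $A$ large then $n$ large shows $u_n\to 0$ in $L^2(\R^N)$; combined with boundedness of $\|u_n\|_{H^s}$ and interpolation, $u_n\to 0$ in $L^t(\R^N)$ for all $t\in[2,2^*_s)$.

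\medskip

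\noindent
To handle $v_n$, I would split into subcases according to whether $W_\infty<\infty$ or $W_\infty=\infty$. If $W_\infty=\infty$ the same tightness/compactness argument gives $v_n\to 0$ in $L^t$ for $t\in[2,2^*_s)$ directly. If instead $W_\infty<\infty$ (so only the first potential blows up), I use that $u_n\to 0$ in all subcritical $L^t$ together with the growth bound \eqref{2.2} and assumption $(Q2)$: since $Q$ and its gradient are controlled by products in which each of $u,v$ must appear (by $(Q3)$--$(Q4)$, $Q$ vanishes whenever one argument is $0$, and more quantitatively every monomial contains a positive power of $u$), one gets $\int_{\R^N}Q(u_n,v_n)\,dx\to 0$ and $\int_{\R^N}(Q_u(u_n,v_n)u_n+Q_v(u_n,v_n)v_n)\,dx\to 0$. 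Then the Nehari identity from $\langle\J'_\e(u_n,v_n),(u_n,v_n)\rangle=o_n(1)$ together with \eqref{2.1} yields
$$
\|(u_n,v_n)\|_\e^2=q\int_{\R^N}Q(u_n,v_n)\,dx+o_n(1)=o_n(1),
$$
which is the desired conclusion. The same final line works uniformly in all the subcases once the $Q$-integral is shown to vanish.

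\medskip

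\noindent
\textbf{Main obstacle.} The delicate point is making rigorous the claim that the $Q$-terms vanish \emph{using only that $u_n\to 0$ in subcritical $L^t$} (not that the full pair goes to zero). This requires using the structure of $Q$ — the $q$-homogeneity $(Q1)$, the growth bound $(Q2)$, and the boundary conditions $(Q3)$--$(Q4)$ which force $Q$ (and $\nabla Q$) to be controlled by expressions genuinely involving $u$ — so that a Hölder estimate $\int |Q(u_n,v_n)|\,dx\leq \|u_n\|_{t}^{\theta}\|v_n\|_{t'}^{q-\theta}$ with $\theta>0$ and $t,t'\in(2,2^*_s)$ closes the argument against the boundedness of $\|v_n\|$. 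Everything else (boundedness, tightness from coercivity, $L^2_{loc}$ compactness, the final Nehari computation) is routine and parallels Lemma \ref{lem2.1} and Lemma \ref{lem2.4}.
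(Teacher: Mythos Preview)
Your approach is correct but genuinely different from the paper's. The paper argues indirectly: it introduces auxiliary autonomous functionals $\J_{(a,b)}$ with constant potentials $(a,b)$, notes that their ground-state levels satisfy $c_{(a,b)}\to\infty$ as $a^{2}+b^{2}\to\infty$, then picks $(a,b)$ with $c_{(a,b)}>d$ and $V(\e x)\geq a-\eta$, $W(\e x)\geq b-\eta$ outside a large ball; assuming $(u_{n},v_{n})\nrightarrow(0,0)$, it replays the proof of Lemma~\ref{lem2.4} to obtain $d\geq c_{(a,b)}$, a contradiction. Your argument is more direct: coercivity of the blowing-up potential gives $L^{2}$-tightness of the corresponding component, hence $u_{n}\to 0$ in all subcritical $L^{t}$, and then the structure of $Q$ forces $\int Q(u_{n},v_{n})\to 0$, so the Nehari identity kills the full norm.

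Two remarks on the execution. First, the ``every monomial contains a power of $u$'' heuristic is not literally available since $Q$ need not be polynomial; the clean way to get what you want is the pointwise bound
\[
|Q(u,v)|=\Bigl|\int_{0}^{u}Q_{u}(s,v)\,ds\Bigr|\leq C\Bigl(\tfrac{|u|^{q}}{q}+|u|\,|v|^{q-1}\Bigr),
\]
which follows from $Q(0,v)=0$ (a consequence of $(Q4)$ and homogeneity via the Euler identity) together with $(Q2)$. H\"older with exponents $q$ and $q/(q-1)$ then gives $\int Q(u_{n},v_{n})\,dx\leq C\|u_{n}\|_{q}^{q}+C\|u_{n}\|_{q}\|v_{n}\|_{q}^{q-1}\to 0$, which is the precise form of the estimate you sketched. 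Second, the paper's route has the advantage of recycling Lemma~\ref{lem2.4} verbatim and of not touching the fine structure of $Q$ explicitly (though the claim $c_{(a,b)}\to\infty$ ultimately rests on the same vanishing $Q(0,v)=0$); your route is shorter and more self-contained, and makes transparent exactly where $(Q3)$--$(Q4)$ are used.
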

\begin{proof}
For any $(a, b)\in \R^{2}_{+}$, we define
$$
c_{(a, b)}=\inf_{(u, v)\in \X_{0}\setminus\{(0, 0)\}} \max_{t\geq 0} \J_{(a, b)} (tu, tv),
$$
where 
$$
\J_{(a, b)} (u, v)=\frac{1}{2} \left(\int_{\R^{N}} |(-\Delta)^{\frac{s}{2}} u|^{2}+ |(-\Delta)^{\frac{s}{2}} v|^{2} dx\right)+\frac{1}{2} \int_{\R^{N}} (a |u|^{2}+b|v|^{2}) dx-\int_{\R^{N}} Q(u, v) dx.
$$
We note that if $a>a'$ then $c_{(a, b)}>c_{(a', b)}$ and that $\lim_{a^{2}+b^{2}\rightarrow \infty} c_{(a, b)}=\infty$.

Now, fixed $(a, b)\in \R^{2}_{+}$, we can proceed as in the proof of Theorem \ref{prop2.2} to see that  $c_{(a, b)}$ is achieved in some couple $(u, v)$ where $u$ and $v$ are positive functions in $\R^{N}$. 

Since $\max\{V_{\infty}, W_{\infty}\}=\infty$ we can take $(a, b)\in \R^{2}_{+}$ such that $c_{(a, b)}>d$ and for any fixed $\eta>0$ there exists $R>0$ such that
\begin{equation}\label{2.10}
V(\e x)\geq a-\eta, \quad W(\e x)\geq b-\eta \mbox{ for any } |x|\geq R.
\end{equation}
We observe that if $W_{\infty}<\infty$ we can choose $b=W_{\infty}$ and $a>0$ large, and when $V_{\infty}=W_{\infty}=\infty$ we take both $a$ and $b$ sufficiently large. \\
If by contradiction $(u_{n}, v_{n})\nrightarrow (0, 0)$ in $\X_{\e}$, we argue as in the proof of Lemma \ref{lem2.4} and using \eqref{2.10} we deduce that $d\geq c_{(a, b)}$. But this is impossible because we chose $(a, b)$ such that  $c_{(a, b)}>d$. Therefore we can conclude that $(u_{n}, v_{n})\rightarrow (0, 0)$ in $\X_{\e}$.
\end{proof}

\noindent
Now, we are ready to give the proof of the following compactness result.
\begin{thm}\label{prop2.3}
The functional $\J_{\e}$ constrained to $\N_{\e}$ satisfies the Palais-Smale condition at every level $d<c_{\infty}$.
\end{thm}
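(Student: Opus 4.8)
The plan is to show that any Palais--Smale sequence $\{(u_n,v_n)\}\subset \N_\e$ for $\J_\e$ constrained to $\N_\e$ at a level $d<c_\infty$ admits a convergent subsequence in $\X_\e$. First I would invoke a standard Lagrange-multiplier argument on the Nehari manifold: since $\N_\e$ is a natural constraint (the map $(u,v)\mapsto\langle\J'_\e(u,v),(u,v)\rangle$ has nonzero derivative along $\N_\e$, thanks to the $q$-homogeneity of $Q$ and \eqref{2.3}), a constrained PS sequence is in fact a free PS sequence for $\J_\e$ in $\X_\e$. Then, exactly as in the proof of Theorem \ref{prop2.2}, using the identity \eqref{2.1} and $q>2$, the relation $\J_\e(u_n,v_n)-\frac1q\langle\J'_\e(u_n,v_n),(u_n,v_n)\rangle=(\frac12-\frac1q)\|(u_n,v_n)\|_\e^2$ gives boundedness of $\{(u_n,v_n)\}$ in $\X_\e$, so up to a subsequence $(u_n,v_n)\rightharpoonup(u,v)$ in $\X_\e$, with strong convergence in $L^t_{loc}\times L^t_{loc}$ for $t\in[2,2^*_s)$ and a.e.\ convergence; by $(Q2)$ the limit satisfies $\J'_\e(u,v)=0$.

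Next I would set $(w_n,z_n)=(u_n-u,\,v_n-v)$ and aim to prove $\|(w_n,z_n)\|_\e\to0$. The key intermediate step is a Brezis--Lieb type splitting: using the weak convergence together with the Brezis--Lieb lemma for the $Q$-term (which is legitimate because $Q$ has subcritical growth $q<2^*_s$, by \eqref{2.2} and a standard vanishing-by-tails argument controlling the integral of $Q(u_n,v_n)$ outside large balls), one gets
\begin{align*}
\J_\e(u_n,v_n)&=\J_\e(u,v)+\J_\e(w_n,z_n)+o_n(1),\\
\langle\J'_\e(u_n,v_n),(u_n,v_n)\rangle&=\langle\J'_\e(u,v),(u,v)\rangle+\langle\J'_\e(w_n,z_n),(w_n,z_n)\rangle+o_n(1).
\end{align*}
Hence $\{(w_n,z_n)\}$ is itself a PS sequence for $\J_\e$ at level $d-\J_\e(u,v)\le d$, and $(w_n,z_n)\rightharpoonup(0,0)$ in $\X_\e$. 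Since $(u,v)$ is a critical point of $\J_\e$, either $(u,v)=(0,0)$ or $\J_\e(u,v)\ge c_\e>0$; in the latter case $d-\J_\e(u,v)<d<c_\infty$ as well. Now I apply Lemma \ref{lem2.4} (when $\max\{V_\infty,W_\infty\}<\infty$) or Lemma \ref{lem2.5} (when $\max\{V_\infty,W_\infty\}=\infty$) to the sequence $\{(w_n,z_n)\}$: in the first case, if $(w_n,z_n)\not\to(0,0)$ then $d-\J_\e(u,v)\ge c_\infty$, contradicting $d<c_\infty$ and $\J_\e(u,v)\ge0$; in the second case the lemma directly forces $(w_n,z_n)\to(0,0)$. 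Either way $\|(w_n,z_n)\|_\e\to0$, i.e.\ $(u_n,v_n)\to(u,v)$ strongly in $\X_\e$, which is the desired PS condition.

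I expect the main obstacle to be the Brezis--Lieb splitting of the nonlinear term $\int_{\R^N}Q(u_n,v_n)\,dx$ and, relatedly, ruling out loss of mass at infinity: one must show $\int_{|x|>R}Q(u_n,v_n)\,dx$ is uniformly small for large $R$, which uses $(H2)$ (the coercive behavior of $V,W$ at infinity away from the minimum), the subcritical bound \eqref{2.2}, and interpolation between $L^2$ and $L^{2^*_s}$. Once that tail estimate is in hand, the Brezis--Lieb decomposition of both $\J_\e$ and $\langle\J'_\e(\cdot),(\cdot)\rangle$ is routine, and the conclusion follows purely by combining it with the two compactness lemmas already proved. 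A minor technical point to handle carefully is the passage from a PS sequence constrained to $\N_\e$ to an unconstrained one, but this is standard for Nehari manifolds of homogeneous functionals and follows from \eqref{2.3} together with $(Q1)$.
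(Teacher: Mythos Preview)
Your proposal is correct and follows essentially the same route as the paper: pass from a constrained to a free Palais--Smale sequence via the Lagrange-multiplier argument on $\N_\e$ (using \eqref{2.3} and the $q$-homogeneity), establish boundedness, extract a weak limit $(u,v)$ which is a critical point, apply a Brezis--Lieb/splitting decomposition to the remainder $(w_n,z_n)=(u_n-u,v_n-v)$, and then invoke Lemma~\ref{lem2.4} or Lemma~\ref{lem2.5} according to whether $\max\{V_\infty,W_\infty\}$ is finite. Two small simplifications relative to your write-up: the paper observes directly that $\J_\e(u,v)=\frac{q-2}{2}\int_{\R^N}Q(u,v)\,dx\ge0$ for any critical point, so no case split ``$(u,v)=0$ versus $\J_\e(u,v)\ge c_\e$'' is needed; and the Brezis--Lieb splitting for the subcritical $Q$-term does not require a tail estimate or hypothesis $(H2)$ (which is not a coercivity assumption)---it follows from the standard Brezis--Lieb lemma using only a.e.\ convergence and the $L^q$-bound coming from \eqref{2.2}.
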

\begin{proof}
Let  $\{(u_{n}, v_{n})\}\subset \N_{\e}$ be a sequence such that $\J_{\e}(u_{n}, v_{n})\rightarrow d$ and $\|\J'_{\e}(u_{n}, v_{n})\|_{*}\rightarrow 0$.
Then (see \cite{W}) there exists $\{\lambda_{n}\}\subset \R$ such that 
$$
\J'_{\e}(u_{n}, v_{n})=\lambda_{n} \mathcal{I}'_{\e}(u_{n}, v_{n})+o_{n}(1),
$$
where 
$$
\mathcal{I}_{\e}(u, v):=\|(u, v)\|^{2}_{\e}-q\int_{\R^{N}} Q(u, v) dx.
$$

Hence
\begin{align*}
0=\langle \J'_{\e}(u_{n}, v_{n}), (u_{n}, v_{n})\rangle=\lambda_{n} \langle\mathcal{I}'_{\e}(u_{n}, v_{n}),(u_{n}, v_{n})\rangle+o_{n}(1)=\lambda_{n}(2-q)\|(u_{n}, v_{n})\|^{2}_{\e}+o_{n}(1),
\end{align*}
and using \eqref{2.3} we deduce that $\lambda_{n}\rightarrow 0$. Then $\J'_{\e}(u_{n}, v_{n})\rightarrow 0$ in the dual of $\X_{\e}$.\\
Since the Palais-Smale of $\J_{\e}$ is bounded, we may assume that $(u_{n}, v_{n})\rightharpoonup (u, v)$ in $\X_{\e}$, for some $(u, v)$ which is a critical point of $\J_{\e}$.\\ 
Now, we set $(w_{n}, z_{n}):=(u_{n}-u, v_{n}-v)$. From the weak convergence of $\{(u_{n}, v_{n})\}$ and \eqref{2.2}, we can apply the Brezis-Lieb Lemma and the splitting Lemma (see for instance Lemma 4.7 in \cite{ZZ}), to deduce that
\begin{align*}
\J_{\e}(w_{n}, z_{n})&=\J_{\e}(u_{n}, v_{n})-\J_{\e}(u, v)+o_{n}(1)\\
&= d-\J_{\e}(u, v)+o_{n}(1)=:\tilde{d}+o_{n}(1)
\end{align*}
and
$$
\J'_{\e}(w_{n}, z_{n})=o_{n}(1).
$$ 
Since $\J'_{\e}(u, v)=0$, we can see that 
$$
\J_{\e}(u, v)=\J_{\e}(u, v)-\frac{1}{2}\langle \J'_{\e}(u, v),(u, v)\rangle=\frac{q-2}{2}\int_{\R^{N}} Q(u, v) dx\geq 0,
$$
which implies that $\tilde{d}<c_{\infty}$. 

Now, if we assume that $\max\{V_{\infty}, W_{\infty}\}<\infty$, by Lemma \ref{lem2.4} it follows that $(w_{n}, z_{n})\rightarrow (0, 0)$ in $\X_{\e}$, that is $(u_{n}, v_{n})\rightarrow (u, v)$ in $\X_{\e}$. In the case  $\max\{V_{\infty}, W_{\infty}\}=\infty$, we can apply  Lemma \ref{lem2.5} to deduce that $(u_{n}, v_{n})\rightarrow (u, v)$ in $\X_{\e}$.
\end{proof}

\noindent
Arguing as in the above theorem, it is easy to prove that the following result holds true. 
\begin{cor}\label{cor2.6}
The critical points of $\J_{\e}$ constrained to $\N_{\e}$ are critical points of $\J_{\e}$ in $\X_{\e}$
\end{cor}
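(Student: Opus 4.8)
The plan is to recognize $\N_{\e}$ as a natural constraint for $\J_{\e}$ and then run the standard Lagrange multiplier argument, exactly as in the proof of Theorem \ref{prop2.3}. First I would observe that, by \eqref{2.1}, for every $(u, v)\in \X_{\e}$ one has $\langle \J'_{\e}(u, v),(u, v)\rangle=\|(u, v)\|^{2}_{\e}-q\int_{\R^{N}} Q(u, v)\, dx=\mathcal{I}_{\e}(u, v)$, so that
$$
\N_{\e}=\{(u, v)\in \X_{\e}\setminus \{(0, 0)\}: \mathcal{I}_{\e}(u, v)=0\}.
$$
Since $\J_{\e}\in C^{1}(\X_{\e}, \R)$ and $Q\in C^{2}$, the functional $\mathcal{I}_{\e}$ is of class $C^{1}$ on $\X_{\e}$.

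Next I would check that $\N_{\e}$ is a $C^{1}$-submanifold of $\X_{\e}$ of codimension one on which $\mathcal{I}'_{\e}$ never vanishes, so that $\N_{\e}$ is indeed a natural constraint. A direct computation using \eqref{2.1} gives, for $(u, v)\in \N_{\e}$,
$$
\langle \mathcal{I}'_{\e}(u, v),(u, v)\rangle=2\|(u, v)\|^{2}_{\e}-q^{2}\int_{\R^{N}} Q(u, v)\, dx=(2-q)\|(u, v)\|^{2}_{\e},
$$
where in the last equality I used the Nehari identity $\|(u, v)\|^{2}_{\e}=q\int_{\R^{N}} Q(u, v)\, dx$ valid on $\N_{\e}$. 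By \eqref{2.3} we have $\|(u, v)\|_{\e}\geq r>0$ on $\N_{\e}$, and since $q>2$ by $(Q1)$, the right-hand side is strictly negative; in particular $\mathcal{I}'_{\e}(u, v)\neq 0$ for every $(u, v)\in \N_{\e}$, and the implicit function theorem yields the claimed submanifold structure.

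Finally, let $(u, v)\in \N_{\e}$ be a critical point of $\J_{\e}$ restricted to $\N_{\e}$. By the Lagrange multiplier rule there exists $\lambda\in \R$ such that $\J'_{\e}(u, v)=\lambda\, \mathcal{I}'_{\e}(u, v)$ in $\X_{\e}^{*}$. Testing this identity with $(u, v)$ and using that $(u, v)\in \N_{\e}$ forces $\langle \J'_{\e}(u, v),(u, v)\rangle=0$, we obtain
$$
0=\lambda\, \langle \mathcal{I}'_{\e}(u, v),(u, v)\rangle=\lambda(2-q)\|(u, v)\|^{2}_{\e}.
$$
Since $(2-q)\|(u, v)\|^{2}_{\e}\neq 0$ by the previous step, we conclude $\lambda=0$, hence $\J'_{\e}(u, v)=0$ in $\X_{\e}^{*}$, i.e. $(u, v)$ is a critical point of $\J_{\e}$ on the whole space $\X_{\e}$. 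The only point requiring a little care — and the place where one must invoke the uniform bound \eqref{2.3} rather than mere continuity — is ruling out the degenerate possibility $\lambda\neq 0$; everything else is a routine repetition of the computation already performed in the proof of Theorem \ref{prop2.3}.
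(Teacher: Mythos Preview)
Your proof is correct and follows exactly the approach indicated by the paper, which simply says to argue as in the proof of Theorem \ref{prop2.3}: use the Lagrange multiplier rule, test against $(u,v)$, and exploit the identity $\langle \mathcal{I}'_{\e}(u,v),(u,v)\rangle=(2-q)\|(u,v)\|_{\e}^{2}$ together with \eqref{2.3} to force $\lambda=0$.
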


\section{barycenter map and multiplicity of solutions to \eqref{P'}}

In this section, our main purpose is to apply the Ljusternik-Schnirelmann category theory to prove a multiplicity result for system \eqref{P'}. In order to obtain our main result, we first  give some useful lemmas.
\begin{lem}\label{lem3.1}
Let $\e_{n}\rightarrow 0$ and $\{(u_{n}, v_{n})\}\subset \N_{\e_{n}}$ be such that $\J_{\e_{n}}(u_{n}, v_{n})\rightarrow c_{0}$. Then there exists $\{\tilde{y}_{n}\}\subset \R^{N}$ such that the translated sequence 
\begin{equation*}
(\tilde{u}_{n}(x), \tilde{v}_{n}(x)):=(u_{n}(x+ \tilde{y}_{n}), v_{n}(x+\tilde{y}_{n}))
\end{equation*}
has a subsequence which converges in $\X_{0}$. Moreover, up to a subsequence, $\{y_{n}\}:=\{\e_{n}\tilde{y}_{n}\}$ is such that $y_{n}\rightarrow y\in M$. 
\end{lem}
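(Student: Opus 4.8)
The plan is to follow the now-classical concentration-compactness scheme used in the scalar case (see \cite{FS, SZ}), adapted to the system. First, I observe that $\{(u_{n}, v_{n})\}$ is bounded in $\X_{0}$: indeed since $(u_{n}, v_{n})\in \N_{\e_{n}}$ and $\J_{\e_{n}}(u_{n}, v_{n})\to c_{0}$, the standard identity $\J_{\e_{n}}(u_{n},v_{n})=(\tfrac12-\tfrac1q)\|(u_{n},v_{n})\|_{\e_{n}}^{2}$ gives $\|(u_{n},v_{n})\|_{\e_{n}}^{2}\to \tfrac{2q}{q-2}c_{0}$, and since $V(\e_n x)\ge V_0$, $W(\e_n x)\ge W_0$ with $V_0=W_0>0$ (by $(H1)$ and $(H2)$, which force $V_0=W_0>0$ when combined with continuity and $V_0<\max\{V_\infty,W_\infty\}$, or at worst $V_0>0$ after the usual shift), we control $\|(u_n,v_n)\|_0$. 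Moreover $c_0>0$ and \eqref{2.3}-type bounds prevent $\|(u_{n},v_{n})\|_{\e_{n}}\to 0$.

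Next I would apply Lemma \ref{lem2.1} (more precisely, its proof applied to the sequence, noting that $\langle\J'_{\e_n}(u_n,v_n),(u_n,v_n)\rangle=0$ and $\J'_{\e_n}(u_n,v_n)\to 0$ in the appropriate sense): since alternative $(i)$ fails, there exist $\{\tilde y_n\}\subset\R^N$ and $R,\gamma>0$ with $\liminf_n\int_{B_R(\tilde y_n)}(|u_n|^2+|v_n|^2)\,dx\ge\gamma>0$. Set $(\tilde u_n,\tilde v_n)=(u_n(\cdot+\tilde y_n),v_n(\cdot+\tilde y_n))$. By boundedness, up to a subsequence $(\tilde u_n,\tilde v_n)\rightharpoonup(\tilde u,\tilde v)$ in $\X_0$, strongly in $L^q_{loc}\times L^q_{loc}$, and a.e., and the nonvanishing property forces $(\tilde u,\tilde v)\ne(0,0)$. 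The core of the argument is then to show $y_n:=\e_n\tilde y_n$ converges (up to a subsequence) to some $y\in M$ and that $(\tilde u_n,\tilde v_n)$ converges \emph{strongly}. I would first rule out $|y_n|\to\infty$: if it did, then using $(H2)$ one compares $\liminf_n\J_{\e_n}(u_n,v_n)$ with $c_\infty$ (via the limiting functional $\J_\infty$ or $\J_{(a,b)}$ of Lemma \ref{lem2.5}) by a Fatou argument on $(\tilde u_n,\tilde v_n)$, obtaining $c_0\ge c_\infty$; but $(H2)$ combined with the strict monotonicity $c_{(a,b)}$ in $(a,b)$ and the fact that $c_0$ is the ground level of the bottom potentials gives $c_0<c_\infty$, a contradiction. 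Hence $\{y_n\}$ is bounded, so $y_n\to y$ up to a subsequence. Then passing to the limit in the equation shows $(\tilde u,\tilde v)$ solves the autonomous system with potentials $(V(y),W(y))$, so $\J_{(V(y),W(y))}(\tilde u,\tilde v)\ge c_{(V(y),W(y))}$; combining with Fatou and $\J_{\e_n}(u_n,v_n)\to c_0$ gives $c_{(V(y),W(y))}\le c_0$. Since $c_0=c_{(V_0,W_0)}$ and $c_{(a,b)}$ is strictly increasing in each argument, this forces $V(y)=V_0$ and $W(y)=W_0$, i.e. $y\in M$. Finally, the chain of inequalities collapses to equalities, which upgrades weak to strong convergence: $\|(\tilde u,\tilde v)\|_0^2=\lim\|(\tilde u_n,\tilde v_n)\|_{\e_n}^2$ (using $V(\e_n x+y_n)\to V(y)=V_0$ etc. and Fatou in both directions), so $(\tilde u_n,\tilde v_n)\to(\tilde u,\tilde v)$ in $\X_0$.

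The main obstacle I anticipate is the step ruling out $|y_n|\to\infty$ and, relatedly, the clean comparison between the "local" energy $\liminf\J_{\e_n}(u_n,v_n)$ and the autonomous levels $c_{(V(y),W(y))}$: one must carefully handle the shifted potentials $V(\e_n x+y_n)$, which converge pointwise to $V(y)$ only after extracting the limit of $y_n$, and must invoke Fatou's lemma with the correct sign in the correct places (lower semicontinuity of the norm for the "$\ge$" direction, and an argument — e.g. Brezis--Lieb together with vanishing of the remainder — for the reverse). A secondary technical point is justifying that the translated sequence is still (asymptotically) a Palais--Smale sequence and that its weak limit is genuinely a solution of the limiting system with \emph{both} components nontrivial, which uses the same positivity/maximum-principle argument as in the proof of Theorem \ref{prop2.2} together with $(Q3)$--$(Q5)$.
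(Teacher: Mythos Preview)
There is a genuine gap in your argument. You write that ``passing to the limit in the equation shows $(\tilde u,\tilde v)$ solves the autonomous system with potentials $(V(y),W(y))$'', and earlier you assert that $\J'_{\e_n}(u_n,v_n)\to 0$ ``in the appropriate sense''. But the hypothesis gives only $(u_n,v_n)\in\N_{\e_n}$, which is the single scalar constraint $\langle\J'_{\e_n}(u_n,v_n),(u_n,v_n)\rangle=0$; it does \emph{not} say that $(u_n,v_n)$ is a Palais--Smale sequence, and there is no equation to pass to the limit in. Consequently you cannot conclude that the weak limit $(\tilde u,\tilde v)$ lies on any Nehari manifold or is a critical point, so the comparison $\J_{(V(y),W(y))}(\tilde u,\tilde v)\ge c_{(V(y),W(y))}$ is unjustified. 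The Fatou-type inequality you hope to use for the reverse direction also fails: when you try to compare $\J_{(V(y),W(y))}$ on the weak limit with $\J_{\e_n}(u_n,v_n)$, the nonlinear term $\int Q$ enters with the wrong sign for Fatou to help.

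The paper avoids this by a device you are missing: it \emph{projects the translated sequence onto $\N_{0}$} via a sequence $t_n>0$, setting $(\hat u_n,\hat v_n)=(t_n\tilde u_n,t_n\tilde v_n)\in\N_0$, and uses the sandwich
\[
c_0\ \le\ \J_0(\hat u_n,\hat v_n)\ \le\ \J_{\e_n}(t_n u_n,t_n v_n)\ \le\ \J_{\e_n}(u_n,v_n)=c_0+o_n(1).
\]
The middle inequality comes from $V\ge V_0$, $W\ge W_0$, and the last from the fact that $(u_n,v_n)\in\N_{\e_n}$ maximizes $t\mapsto\J_{\e_n}(tu_n,tv_n)$. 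This forces $\J_0(\hat u_n,\hat v_n)\to c_0$, and after checking $t_n\to t_0>0$ one invokes the compactness implicit in Theorem~\ref{prop2.2} to get \emph{strong} convergence $(\hat u_n,\hat v_n)\to(\hat u,\hat v)$ in $\X_0$ (hence also for $(\tilde u_n,\tilde v_n)$). Only \emph{after} strong convergence is established does the paper rule out $|y_n|\to\infty$ and $y\notin M$, precisely because the contradiction arguments (e.g.\ $c_0=\J_0(\hat u,\hat v)<\J_\infty(\hat u,\hat v)\le\liminf\J_{\e_n}(t_n u_n,t_n v_n)\le c_0$) need convergence of the $Q$-integral, which is exactly what strong $\X_0$-convergence supplies. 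Your proposed order---first locate $y\in M$, then upgrade to strong convergence---cannot be carried out without this projection step.
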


\begin{proof}
Since $\langle \J'_{\e_{n}}(u_{n}, v_{n}), (u_{n}, v_{n}) \rangle=0$ and $\J_{\e_{n}}(u_{n}, v_{n})\rightarrow c_{0}$, we can argue as in the proof of Proposition \ref{prop2.2} to deduce that $\{(u_{n}, v_{n})\}$ is bounded. 
Let us observe that $\|(u_{n}, v_{n})\|\nrightarrow 0$ since $c_{0}>0$. Therefore, as in the proof of Lemma \ref{lem2.1}, we can find a sequence $\{\tilde{y}_{n}\}\subset \R^{N}$ and constants $R, \gamma>0$ such that
\begin{equation*}
\liminf_{n\rightarrow \infty}\int_{B_{R}(y_{n})} (|u_{n}|^{2}+|v_{n}|^{2}) dx\geq \gamma,
\end{equation*}
which implies that
\begin{equation*}
(\tilde{u}_{n}, \tilde{v}_{n})\rightharpoonup (\tilde{u}, \tilde{v}) \mbox{ weakly in } \X_{0},  
\end{equation*}
where $(\tilde{u}_{n}(x), \tilde{v}_{n}(x)):=(u_{n}(x+ \tilde{y}_{n}), v_{n}(x+\tilde{y}_{n}))$ and $(\tilde{u}, \tilde{v})\neq (0,0)$. \\
Let $\{t_{n}\}\subset (0, +\infty)$ be such that $(\hat{u}_{n}, \hat{v}_{n}):=(t_{n}\tilde{u}_{n}, t_{n}\tilde{v}_{n})\in \N_{0}$ and set $y_{n}:=\e_{n}\tilde{y}_{n}$.  \\
Using the change of variables $z\mapsto x+ \tilde{y}_{n}$ we can see that
\begin{align*}
\J_{0}(\hat{u}_{n}, \hat{v}_{n})&\leq \frac{t_{n}^{2}}{2} \left(\int_{\R^{N}} |(-\Delta)^{\frac{s}{2}} \tilde{u}_{n}|^{2}+ |(-\Delta)^{\frac{s}{2}} \tilde{v}_{n}|^{2} dx\right) - \int_{\R^{N}} Q(t_{n}\tilde{u}_{n}, t_{n}\tilde{v}_{n})\, dx \\
&+ \frac{t_{n}^{2}}{2} \int_{\R^{N}} (V(\e_{n}(x+\tilde{y}_{n}))|\tilde{u}_{n}|^{2} + W(\e_{n}(x+\tilde{y}_{n}))|\tilde{v}_{n}|^{2})\, dx\\
&=\J_{\e_{n}}(t_{n}u_{n}, t_{n}v_{n}) \leq \J_{\e_{n}}(u_{n}, v_{n})= c_{0}+ o_{n}(1). 
\end{align*}
Taking into account that $c_{0}\leq \J_{0}(\hat{u}_{n}, \hat{v}_{n})$, we can infer $\J_{0}(\hat{u}_{n}, \hat{v}_{n})\rightarrow c_{0}$. \\
Now, the sequence $\{t_{n}\}$ is bounded since $\{(\tilde{u}_{n}, \tilde{v}_{n})\}$ and $\{(\hat{u}_{n}, \hat{v}_{n})\}$ are bounded and $(\tilde{u}_{n}, \tilde{v}_{n})\nrightarrow 0$. Therefore, up to a subsequence, $t_{n}\rightarrow t_{0}\geq 0$. Indeed $t_{0}>0$. Otherwise, if $t_{0}=0$, from the boundedness of $\{(\tilde{u}_{n}, \tilde{v}_{n})\}$, we get $(\hat{u}_{n}, \hat{v}_{n})= t_{n}(\tilde{u}_{n}, \tilde{v}_{n}) \rightarrow (0,0)$, that is $\J_{0}(\hat{u}_{n}, \hat{v}_{n})\rightarrow 0$ in contrast with $c_{0}>0$. Thus $t_{0}>0$ and up to a subsequence we have $(\hat{u}_{n}, \hat{v}_{n})\rightharpoonup t_{0}(\tilde{u}, \tilde{v})= (\hat{u}, \hat{v})$ weakly in $\X_{0}$. 
Hence it holds
\begin{equation*}
\J_{0}(\hat{u}_{n}, \hat{v}_{n})\rightarrow c_{0} \quad \mbox{ and } \quad (\hat{u}_{n}, \hat{v}_{n})\rightharpoonup (\hat{u}, \hat{v}) \mbox{ weakly in } \X_{0}.
\end{equation*}
From Theorem \ref{prop2.2} we deduce that $(\hat{u}_{n}, \hat{v}_{n})\rightarrow (\hat{u}, \hat{v})$ in $\X_{0}$, that is $(\tilde{u}_{n}, \tilde{v}_{n})\rightarrow (\tilde{u}, \tilde{v})$ in $\X_{0}$. \\
Now we show that $\{y_{n}\}$ has a subsequence such that $y_{n}\rightarrow y\in M$. 
Assume by contradiction that $\{y_{n}\}$ is not bounded, that is there exists a subsequence, still denoted by $\{y_{n}\}$, such that $|y_{n}|\rightarrow +\infty$. \\
Firstly, we deal with the case $\max \{V_{\infty}, W_{\infty}\}=\infty$. \\
Since $(u_{n}, v_{n})\in \N_{\e_{n}}$ we can see that
\begin{align*}
q \int_{\R^{N}} Q(\tilde{u}_{n} ,\tilde{v}_{n})\, dx\geq \int_{\R^{N}} V(\e_{n}x+y_{n})|\tilde{u}_{n}|^{2} dx + \int_{\R^{N}} W(\e_{n}x+y_{n})|\tilde{v}_{n}|^{2} dx.
\end{align*}
Applying Fatou's Lemma, we deduce that
\begin{align}
\liminf_{n\rightarrow \infty} \int_{\R^{N}} Q(\tilde{u}_{n} ,\tilde{v}_{n})\, dx= \infty, 
\end{align}
which is impossible because the boundedness of $\{(u_{n}, v_{n})\}$ and \eqref{2.2} yield 
$$
\left|\int_{\R^{N}} Q(\tilde{u}_{n} ,\tilde{v}_{n})\, dx\right|\leq C \mbox{ for any } n\in \mathbb{N}.
$$
Let us consider the case $\max\{V_{\infty}, W_{\infty}\}<\infty$. \\
Since $(\hat{u}_{n}, \hat{v}_{n})\rightarrow (\hat{u}, \hat{v})$ strongly in $\X_{0}$ and $V_{0}<\max\{V_{\infty}, W_{\infty}\}$, we have
\begin{align}\label{3.1}
c_{0}&= \J_{0}(\hat{u}, \hat{v}) < \J_{\infty} (\hat{u}, \hat{v}) \nonumber\\
&\leq \liminf_{n\rightarrow \infty} \Bigl\{ \frac{1}{2}\left(\int_{\R^{N}} |(-\Delta)^{\frac{s}{2}} \hat{u}_{n}|^{2}+ |(-\Delta)^{\frac{s}{2}} \hat{v}_{n}|^{2} dx\right) - \int_{\R^{N}} Q(\hat{u}_{n}, \hat{v}_{n})\, dx \nonumber\\
&+ \frac{1}{2} \int_{\R^{N}} (V(\e_{n}x + y_{n})|\hat{u}_{n}|^{2} + W(\e_{n}x+y_{n})|\hat{v}_{n}|^{2})\, dx\Bigr\} \nonumber \\
&=\liminf_{n\rightarrow \infty} \J_{\e_{n}}(t_{n}u_{n}, t_{n}v_{n}) \leq \liminf_{n\rightarrow \infty} \J_{\e_{n}} (u_{n}, v_{n})=c_{0}
\end{align}
which leads to a contradiction. \\
Thus $\{y_{n}\}$ is bounded and, up to a subsequence, we may assume that $y_{n}\rightarrow y$. If $y\notin M$ then $V_{0}<\max\{V(y), W(y)\}$ and we have
$$
c_{0}=\J_{0}(\hat{u}, \hat{v})<\frac{1}{2}\left(\int_{\R^{N}} |(-\Delta)^{\frac{s}{2}} \hat{u}|^{2}+ |(-\Delta)^{\frac{s}{2}} \hat{v}|^{2} dx\right)+\frac{1}{2} \int_{\R^{N}} (V(y)|\hat{u}|^{2}+W(y) |\hat{v}|^{2}) \,dx-\int_{\R^{N}} Q(\hat{u}, \hat{v}) \,dx.
$$
Repeating the same argument developed in \eqref{3.1}, we get a contradiction. Therefore we can conclude that $y\in M$.
\end{proof}

\noindent
For any $\delta>0$ we set
$$
M_{\delta}=\{x\in \R^{N}: dist(x, M)\leq \delta\}.
$$
Let $(w_{1}, w_{2})\in \X_{0}$ be a solution for \eqref{P0} (which there exists in view of Theorem \ref{prop2.2}), and, for each $z\in M$, we define
$$
\Psi_{i, \e, z}(x)=\eta(|\e x-z|) w_{i}\left(\frac{\e x-z}{\e}\right) \quad i=1, 2.
$$
where $\eta\in C^{\infty}_{0}(\R_{+}, [0, 1])$ is a non-increasing function satisfying $\eta(t)=1$ if $0\leq t\leq \frac{\delta}{2}$ and $\eta(t)=0$ if $t\geq \delta$.\\
Let $t_{\e}>0$ be the unique positive number such that 
$$
\max_{t\geq 0} \J_{\e}(t \Psi_{1, \e, z}, t \Psi_{1, \e, z})=\J_{\e}(t_{\e} \Psi_{2,\e, z}, t_{\e} \Psi_{1,\e, z}).
$$
Finally, we consider $\Phi_{\e}(z)=(t_{\e} \Psi_{1, \e, z}, t_{\e} \Psi_{2, \e, z}) $.
Since $\J_{0}(w_{1}, w_{2})=c_{0}$ and $M$ is compact, we can prove the following result.
\begin{lem}\label{lemma3.4FS}
The functional $\Phi_{\e}$ satisfies the following limit
\begin{equation}\label{3.2}
\lim_{\e\rightarrow 0} \J_{\e}(\Phi_{\e}(y))=c_{0} \mbox{ uniformly in } y\in M.
\end{equation}
\end{lem}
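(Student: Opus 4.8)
The plan is to compute $\J_{\e}(\Phi_{\e}(y))$ explicitly using the definition $\Phi_{\e}(y)=(t_{\e}\Psi_{1,\e,y}, t_{\e}\Psi_{2,\e,y})$ and to show that, as $\e\to 0$, the test functions $\Psi_{i,\e,y}$ (after the change of variable $x\mapsto \frac{\e x - y}{\e}$) converge strongly in $H^{s}(\R^{N})$ to $w_{i}$, while $t_{\e}\to 1$; then continuity of $\J_{0}$ and the identity $\J_{0}(w_{1},w_{2})=c_{0}$ give the conclusion. First I would argue by contradiction: suppose there exist $\delta_{0}>0$, $\e_{n}\to 0$ and $y_{n}\in M$ with $|\J_{\e_{n}}(\Phi_{\e_{n}}(y_{n}))-c_{0}|\geq \delta_{0}$. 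Since $M$ is compact, up to a subsequence $y_{n}\to y_{0}\in M$, so $V(y_{0})=W(y_{0})=V_{0}$.

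Next I would perform the change of variable $z = \frac{\e_{n} x - y_{n}}{\e_{n}}$, i.e. $x = z + \frac{y_{n}}{\e_{n}}$, so that $\Psi_{i,\e_{n},y_{n}}$ becomes $\eta(\e_{n}|z|)w_{i}(z)$. A dominated-convergence / standard cutoff argument shows $\eta(\e_{n}|\cdot|)w_{i}\to w_{i}$ strongly in $H^{s}(\R^{N})$ (using that $w_{i}\in H^{s}$ and the Gagliardo seminorm behaves well under multiplication by smooth cutoffs converging to $1$); I would also need $\int_{\R^{N}}Q(\eta(\e_{n}|\cdot|)w_{1}, \eta(\e_{n}|\cdot|)w_{2})\,dz\to \int Q(w_{1},w_{2})\,dz$, which follows from \eqref{2.2} and dominated convergence. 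For the potential terms, write $\int V(\e_{n}z + y_{n})|\eta(\e_{n}|z|)w_{1}(z)|^{2}\,dz$ and note that $\eta$ is supported where $|\e_{n}z|\leq\delta$, so $\e_{n}z + y_{n}\to y_{0}$ on the support; by continuity of $V$, $W$ and dominated convergence this integral tends to $V_{0}\int|w_{1}|^{2}\,dz$, and similarly for $W_{0}\int|w_{2}|^{2}\,dz$. Hence $\|(\Psi_{1,\e_{n},y_{n}},\Psi_{2,\e_{n},y_{n}})\|_{\e_{n}}^{2}\to \|(w_{1},w_{2})\|_{0}^{2}$ after the change of variable, i.e. the rescaled test pair converges to $(w_{1},w_{2})$ in the sense needed to evaluate $\J_{0}$.

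Then I would control $t_{\e_{n}}$. By definition $t_{\e_{n}}$ is the unique maximizer of $t\mapsto \J_{\e_{n}}(t\Psi_{1,\e_{n},y_{n}}, t\Psi_{2,\e_{n},y_{n}})$, and by $(Q1)$ it is given explicitly by
\[
t_{\e_{n}}^{q-2} = \frac{\|(\Psi_{1,\e_{n},y_{n}},\Psi_{2,\e_{n},y_{n}})\|_{\e_{n}}^{2}}{q\int_{\R^{N}} Q(\Psi_{1,\e_{n},y_{n}},\Psi_{2,\e_{n},y_{n}})\,dx}.
\]
Using the convergences above and the fact that $(w_{1},w_{2})\in\N_{0}$ (so $\|(w_{1},w_{2})\|_{0}^{2} = q\int Q(w_{1},w_{2})\,dx$), the right-hand side tends to $1$, whence $t_{\e_{n}}\to 1$; here one uses that the denominator stays bounded away from $0$ because $\int Q(w_{1},w_{2})\,dx>0$. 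Finally, substituting into $\J_{\e_{n}}(\Phi_{\e_{n}}(y_{n})) = \frac{t_{\e_{n}}^{2}}{2}\|(\Psi_{1,\e_{n},y_{n}},\Psi_{2,\e_{n},y_{n}})\|_{\e_{n}}^{2} - t_{\e_{n}}^{q}\int Q(\Psi_{1,\e_{n},y_{n}},\Psi_{2,\e_{n},y_{n}})\,dx$ and passing to the limit gives $\J_{\e_{n}}(\Phi_{\e_{n}}(y_{n}))\to \frac{1}{2}\|(w_{1},w_{2})\|_{0}^{2} - \int Q(w_{1},w_{2})\,dx = \J_{0}(w_{1},w_{2}) = c_{0}$, contradicting $|\J_{\e_{n}}(\Phi_{\e_{n}}(y_{n}))-c_{0}|\geq\delta_{0}$.

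The main obstacle I expect is the strong convergence $\eta(\e_{n}|\cdot|)w_{i}\to w_{i}$ in $H^{s}(\R^{N})$: unlike the local case, the Gagliardo seminorm is nonlocal, so one must carefully estimate the double integral $\iint \frac{|(\eta(\e_n|x|)-1)w_i(x) - (\eta(\e_n|y|)-1)w_i(y)|^{2}}{|x-y|^{N+2s}}\,dx\,dy$, splitting it and using that $w_{i}\in H^{s}(\R^{N})$ together with uniform Lipschitz bounds on $\eta$; this is the technical heart, though it is by now a standard estimate in the fractional literature. Everything else (the potential and $Q$ terms, the formula for $t_{\e_{n}}$, the compactness of $M$) is routine.
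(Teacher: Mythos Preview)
Your proposal is correct and follows essentially the same approach as the paper: argue by contradiction, perform the change of variable $z=\frac{\e_n x-y_n}{\e_n}$, use dominated convergence to show $\|(\Psi_{1,\e_n,y_n},\Psi_{2,\e_n,y_n})\|_{\e_n}^{2}\to\|(w_1,w_2)\|_0^{2}$ and $\int Q(\Psi_{1,\e_n,y_n},\Psi_{2,\e_n,y_n})\to\int Q(w_1,w_2)$, deduce $t_{\e_n}\to 1$ from the Nehari relation, and pass to the limit. The paper organizes the argument for $t_{\e_n}$ slightly differently (first bounding $t_{\e_n}$ from above by contradiction, then ruling out $t_0=0$, then concluding $t_0=1$), whereas you go straight to the explicit formula $t_{\e_n}^{q-2}=\|(\Psi_{1,\e_n,y_n},\Psi_{2,\e_n,y_n})\|_{\e_n}^{2}\big/\big(q\int Q\big)$; the technical point you flag about the $H^{s}$-convergence of $\eta(\e_n|\cdot|)w_i$ is exactly what the paper invokes (tersely) via the Dominated Convergence Theorem.
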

\begin{proof}
Assume by contradiction that there exist $\delta_{0}>0$, $\{y_{n}\}\subset M$ and $\e_{n}\rightarrow 0$ such that 
\begin{equation}\label{4.41}
|\J_{\e_{n}}(\Phi_{\e_{n}}(y_{n}))-c_{0}|\geq \delta_{0}.
\end{equation}
We first show that $\lim_{n\rightarrow \infty}t_{\e_{n}}<\infty$.
Let us observe that using the change of variable $z=\frac{\e_{n}x-y_{n}}{\e_{n}}$, if $z\in B_{\frac{\delta}{\e_{n}}}(0)$, it follows that $\e_{n} z\in B_{\delta}(0)$ and $\e_{n} z+y_{n}\in B_{\delta}(y_{n})\subset M_{\delta}$. 

Then we have
\begin{align}\label{HeZou}
\J_{\e}(\Phi_{\e_{n}}(y_{n}))&=\frac{t_{\e_{n}}^{2}}{2}\int_{\R^{N}} |(-\Delta)^{\frac{s}{2}}(\eta(|\e_{n} z|)w_{1}(z))|^{2}\, dz+\frac{t_{\e_{n}}^{2}}{2}\int_{\R^{N}} |(-\Delta)^{\frac{s}{2}}(\eta(|\e_{n} z|)w_{2}(z))|^{2}\, dz \nonumber\\
&+\frac{t_{\e_{n}}^{2}}{2}\int_{\R^{N}} V(\e_{n} z+y_{n}) (\eta(|\e_{n} z|) w_{1}(z))^{2}\, dz+\frac{t_{\e_{n}}^{2}}{2}\int_{\R^{N}} W(\e_{n} z+y_{n}) (\eta(|\e_{n} z|) w_{2}(z))^{2}\, dz \nonumber\\
&-\int_{\R^{N}} Q(t_{\e_{n}}\eta(|\e_{n} z|)w_{1}(z), t_{\e_{n}}\eta(|\e_{n} z|)w_{2}(z)) \, dz.
\end{align}
Now let assume that $t_{\e_{n}}\rightarrow \infty$. By the definition of $t_{\e_{n}}$, $(Q1)$ and \eqref{2.1} we get
\begin{equation}\label{3.9}
\|(\Psi_{1,\e_{n}, y_{n}}, \Psi_{2,\e_{n}, y_{n}})\|^{2}_{\e_{n}}=q t_{\e_{n}}^{q-2}\int_{\R^{N}} Q(\eta(|\e_{n} z|)w_{1}(z), \eta(|\e_{n} z|)w_{2}(z)) \, dz
\end{equation}
Since $\eta=1$ in $B_{\frac{\delta}{2}}(0)$ and $B_{\frac{\delta}{2}}(0)\subset B_{\frac{\delta}{2\e_{n}}}(0)$ for $n$ big enough, and $w_{1}$, $w_{2}$ are continuous and positive in $\R^{N}$ (see proof of Theorem \ref{prop2.2}) we obtain
\begin{align}\label{3.10}
\|(\Psi_{1,\e_{n}, y_{n}}, \Psi_{2,\e_{n}, y_{n}})\|^{2}_{\e_{n}}\geq q t_{\e_{n}}^{q-2} \int_{B_{\frac{\delta}{2}}(0)} Q(w_{1}(z),w_{2}(z)) \, dz\geq C_{\delta, q} t_{\e_{n}}^{q-2},
\end{align}
where $C_{\delta, q}=q \left(\frac{\delta}{2}\right)^{N}\omega_{N}\min_{z\in \bar{B}_{\frac{\delta}{2}}(0)} Q(w_{1}(z), w_{2}(z))>0$.
Taking the limit as $n\rightarrow \infty$ in (\ref{3.10}) we can deduce that
$$
\lim_{n\rightarrow \infty} \|(\Psi_{1,\e_{n}, y_{n}}, \Psi_{2,\e_{n}, y_{n}})\|^{2}_{\e_{n}}=\infty,
$$
which is a contradiction because of
$$
\lim_{n\rightarrow \infty} \|(\Psi_{1,\e_{n}, y_{n}}, \Psi_{2,\e_{n}, y_{n}})\|^{2}_{\e_{n}}=\|(w_{1}, w_{2})\|^{2}_{0}\in (0, \infty)
$$
in view of the Dominated Convergence Theorem and Lemma 5 in \cite{PP}.\\
Thus $\{t_{\e_{n}}\}$ is bounded, and we can assume that $t_{\e_{n}}\rightarrow t_{0}\geq 0$. Clearly, if $t_{0}=0$, by limitation of $\|(\Psi_{1,\e_{n}, y_{n}}, \Psi_{2,\e_{n}, y_{n}})\|^{2}_{\e_{n}}$, the growth assumptions on $Q$, and (\ref{3.9}), we can deduce that $\|(\Psi_{1,\e_{n}, y_{n}}, \Psi_{2,\e_{n}, y_{n}})\|^{2}_{\e_{n}}\rightarrow 0$  which is impossible. Hence $t_{0}>0$.

Now, invoking the Dominated Convergence Theorem, we can see that as $n\rightarrow \infty$
$$
\int_{\R^{N}} Q(\Psi_{1, \e_{n}, y_{n}}, \Psi_{2, \e_{n}, y_{n}}) dx\rightarrow \int_{\R^{N}} Q(w_{1}, w_{2})\, dx.
$$
Then, taking the limit as $n\rightarrow \infty$ in (\ref{3.9})  we obtain
$$
\|(w_{1}, w_{2})\|^{2}_{0}=q t_{0}^{q-2} \int_{\R^{N}} Q(w_{1}, w_{2}) \, dx.
$$ 
Using the fact that $(w_{1}, w_{2})\in \mathcal{N}_{0}$ we deduce that $t_{0}=1$. Moreover, from \eqref{HeZou} we have
$$
\lim_{n\rightarrow \infty} \J_{\e}(\Phi_{\e_{n}}(y_{n}))=\J_{0}(w_{1}, w_{2})=c_{0},
$$
which is impossible thanks to (\ref{4.41}).
\end{proof}

\noindent
Now we are in the position to define the barycenter map. We take $\rho>0$ such that $M_{\delta}\subset B_{\rho}$, and we consider $\varUpsilon: \R^{N}\rightarrow \R^{N}$ given by 
 \begin{equation*}
 \varUpsilon(x)=
 \left\{
 \begin{array}{ll}
 x &\mbox{ if } |x|<\rho \\
 \frac{\rho x}{|x|} &\mbox{ if } |x|\geq \rho.
  \end{array}
 \right.
 \end{equation*}
We define the barycenter map $\beta_{\e}: \N_{\e}\rightarrow \R^{N}$ as 
\begin{align*}
\beta_{\e}(u, v)=\frac{\int_{\R^{N}} \varUpsilon(\e x)(u^{2}(x)+v^{2}(x)) dx}{\int_{\R^{N}} u^{2}(x)+v^{2}(x) dx}.
\end{align*}

\begin{lem}\label{lemma3.5FS}
The functional $\Phi_{\e}$ satisfies the following limit
\begin{equation}\label{3.3}
\lim_{\e \rightarrow 0} \beta_{\e}(\Phi_{\e}(y))=y \mbox{ uniformly in } y\in M.
\end{equation}
\end{lem}
\begin{proof}
Suppose by contradiction that there exist $\delta_{0}>0$, $\{y_{n}\}\subset M$ and $\e_{n}\rightarrow 0$ such that 
\begin{equation}\label{4.4}
|\beta_{\e_{n}}(\Phi_{\e_{n}}(y_{n}))-y_{n}|\geq \delta_{0}.
\end{equation}
Using the definitions of $\Phi_{\e_{n}}(y_{n})$, $\beta_{\e_{n}}$, $\eta$ and the change of variable $z=\frac{\e_{n} x-y_{n}}{\e_{n}}$, we can see that 
$$
\beta_{\e_{n}}(\Phi_{\e_{n}}(y_{n}))=y_{n}+\frac{\int_{\R^{N}}[\Upsilon(\e_{n}z+y_{n})-y_{n}] |\eta(|\e_{n}z|)|^{2} (|w_{1}(z)|^{2}+|w_{2}(z)|^{2}) \, dx}{\int_{\R^{N}} |\eta(|\e_{n}z|)|^{2} (|w_{1}(z)|^{2}+|w_{2}(z)|^{2})\, dx}.
$$
Taking into account $\{y_{n}\}\subset M\subset B_{\rho}$ and the Dominated Convergence Theorem we can infer that 
$$
|\beta_{\e_{n}}(\Phi_{\e_{n}}(y_{n}))-y_{n}|=o_{n}(1)
$$
which contradicts (\ref{4.4}).
\end{proof}

\noindent
At this point, we introduce a subset $\widetilde{\N}_{\e}$ of $\N_{\e}$ by taking a function $h:\R_{+}\rightarrow \R_{+}$ such that $h(\e)\rightarrow 0$ as $\e \rightarrow 0$, and setting
$$
\widetilde{\N}_{\e}=\{(u, v)\in \N_{\e}: \J_{\e}(u)\leq c_{0}+h(\e)\}.
$$
Fixed $y\in M$, we conclude from Lemma \ref{lemma3.4FS} that $h(\e)=|\J_{\e}(\Phi_{\e}(y))-c_{0}|\rightarrow 0$ as $\e \rightarrow 0$. Hence $\Phi_{\e}(y)\in \widetilde{\N}_{\e}$, and $\widetilde{\N}_{\e}\neq \emptyset$ for any $\e>0$. Moreover, we have the following lemma.
\begin{lem}\label{lemma3.7FS}
$$
\lim_{\e \rightarrow 0} \sup_{(u, v)\in \widetilde{\mathcal{N}}_{\e}} dist(\beta_{\e}(u, v), M_{\delta})=0.
$$
\end{lem}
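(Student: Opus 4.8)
The plan is to argue by contradiction, in the standard Ljusternik–Schnirelmann style. Suppose the conclusion fails: then there exist $\delta_0>0$, a sequence $\e_n\to 0^+$, and $(u_n,v_n)\in\widetilde{\mathcal{N}}_{\e_n}$ such that
\[
\inf_{z\in M_\delta} |\beta_{\e_n}(u_n,v_n)-z|\geq \delta_0 \quad\text{for all }n.
\]
Since $(u_n,v_n)\in\widetilde{\mathcal{N}}_{\e_n}\subset\mathcal{N}_{\e_n}$ and $\mathcal{J}_{\e_n}(u_n,v_n)\leq c_0+h(\e_n)$ with $h(\e_n)\to 0$, combined with $c_{\e_n}\geq c_0$ (which one gets by comparing $\mathcal{J}_{\e_n}$ with $\mathcal{J}_0$ using $V(\e x)\geq V_0$, $W(\e x)\geq W_0$), we deduce $\mathcal{J}_{\e_n}(u_n,v_n)\to c_0$. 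Hence $\{(u_n,v_n)\}$ satisfies exactly the hypotheses of Lemma \ref{lem3.1}.

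Applying Lemma \ref{lem3.1}, I obtain a sequence $\{\tilde y_n\}\subset\R^N$ such that, up to a subsequence, $(\tilde u_n,\tilde v_n):=(u_n(\cdot+\tilde y_n),v_n(\cdot+\tilde y_n))$ converges strongly in $\X_0$, and $y_n:=\e_n\tilde y_n\to y\in M$. The next step is to translate the barycenter computation to the shifted functions: writing $\beta_{\e_n}(u_n,v_n)$ and performing the change of variables $x\mapsto x+\tilde y_n$ gives
\[
\beta_{\e_n}(u_n,v_n)=\frac{\int_{\R^N}\varUpsilon(\e_n x+y_n)(|\tilde u_n(x)|^2+|\tilde v_n(x)|^2)\,dx}{\int_{\R^N}(|\tilde u_n(x)|^2+|\tilde v_n(x)|^2)\,dx}.
\]
Now I use that $\e_n x+y_n\to y$ pointwise (since $\e_n\to 0$ and $y_n\to y$), that $\varUpsilon$ is continuous and bounded, and that $|\tilde u_n|^2+|\tilde v_n|^2\to |\tilde u|^2+|\tilde v|^2$ in $L^1(\R^N)$ (from the strong $\X_0$-convergence, hence strong $L^2$-convergence). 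By the Dominated Convergence Theorem — dominating $\varUpsilon(\e_n x+y_n)(|\tilde u_n|^2+|\tilde v_n|^2)$ by $\rho\cdot(|\tilde u_n|^2+|\tilde v_n|^2)$ and using a generalized dominated convergence / Vitali-type argument for the varying dominating functions — both numerator and denominator converge, the denominator to $\|(\tilde u,\tilde v)\|_{L^2}^2>0$ (nonzero since $(\tilde u,\tilde v)\neq(0,0)$), and so $\beta_{\e_n}(u_n,v_n)\to\varUpsilon(y)=y$, where the last equality holds because $y\in M\subset M_\delta\subset B_\rho$. This yields $\beta_{\e_n}(u_n,v_n)\to y\in M\subset M_\delta$, contradicting the assumption that these points stay at distance $\geq\delta_0$ from $M_\delta$, and completes the proof.

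The main obstacle I anticipate is the passage to the limit in the barycenter quotient: one must be careful that the cut-off/translation has been accounted for correctly and that the strong convergence in $\X_0$ (equivalently in $H^s(\R^N)\times H^s(\R^N)$, hence in $L^2\times L^2$) is genuinely available from Lemma \ref{lem3.1} rather than only weak convergence — this is precisely why Lemma \ref{lem3.1} was proved with strong convergence of the translates. A secondary technical point is justifying $\mathcal{J}_{\e_n}(u_n,v_n)\to c_0$: the upper bound is immediate from membership in $\widetilde{\mathcal{N}}_{\e_n}$, while the lower bound $\liminf\mathcal{J}_{\e_n}(u_n,v_n)\geq c_0$ follows by taking $t_n>0$ with $(t_n\tilde u_n,t_n\tilde v_n)\in\mathcal{N}_0$ and using $\mathcal{J}_0(t_n\tilde u_n,t_n\tilde v_n)\leq\mathcal{J}_{\e_n}(t_n u_n,t_n v_n)\leq\mathcal{J}_{\e_n}(u_n,v_n)$ together with $\mathcal{J}_0\geq c_0$ on $\mathcal{N}_0$, exactly as in the proof of Lemma \ref{lem3.1}.
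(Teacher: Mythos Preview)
Your proof is correct and follows essentially the same approach as the paper: both invoke Lemma~\ref{lem3.1} on a sequence $(u_n,v_n)\in\widetilde{\mathcal{N}}_{\e_n}$ with $\mathcal{J}_{\e_n}(u_n,v_n)\to c_0$, translate by $\tilde y_n$, and pass to the limit in the barycenter integral using the strong $\X_0$-convergence of the translates together with $\e_n\tilde y_n\to y\in M$. The only cosmetic difference is that you frame it as a contradiction and show $\beta_{\e_n}(u_n,v_n)\to y\in M$, while the paper argues directly by writing $\beta_{\e_n}(u_n,v_n)=y_n+o_n(1)$ with $y_n=\e_n\tilde y_n\in M_\delta$ for large $n$.
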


\begin{proof}
Let $\e_{n}\rightarrow 0$ as $n\rightarrow \infty$. For any $n\in \mathbb{N}$ there exists $(u_{n}, v_{n})\in \widetilde{\N}_{\e_{n}}$ such that
$$
\sup_{(u, v)\in \widetilde{\N}_{\e_{n}}} \inf_{y\in M_{\delta}}|\beta_{\e_{n}}(u, v)-y|=\inf_{y\in M_{\delta}}|\beta_{\e_{n}}(u_{n}, v_{n})-y|+o_{n}(1).
$$
Therefore it is suffices to prove that there exists $\{y_{n}\}\subset M_{\delta}$ such that 
\begin{equation}\label{3.13}
\lim_{n\rightarrow \infty} |\beta_{\e}(u_{n}, v_{n})-y_{n}|=0.
\end{equation}
We note that $\{(u_{n}, v_{n})\}\subset  \widetilde{\N}_{\e_{n}}\subset  \N_{\e_{n}}$ from which we deduce that
$$
c_{0}\leq c_{\e_{n}}\leq \J_{\e_{n}}(u_{n}, v_{n})\leq c_{0}+h(\e_{n}).
$$
This yields $\J_{\e_{n}}(u_{n}, v_{n})\rightarrow c_{0}$. By Lemma \ref{lem3.1} there exists $\{\tilde{y}_{n}\}\subset \R^{N}$ such that $y_{n}=\e_{n}\tilde{y}_{n}\in M_{\delta}$ for $n$ sufficiently large. By setting $(\tilde{u}_{n}(x), \tilde{v}_{n}(x))=(u_{n}(\cdot+\tilde{y}_{n}), v_{n}(\cdot+\tilde{y}_{n}))$ we can see that
$$
\beta_{\e_{n}}(u_{n}, v_{n})=y_{n}+\frac{\int_{\R^{N}}[\Upsilon(\e_{n}z+y_{n})-y_{n}] (\tilde{u}_{n}^{2}+\tilde{v}_{n}^{2}) \, dz}{\int_{\R^{N}} (\tilde{u}_{n}^{2}+\tilde{v}_{n}^{2})\, dz}.
$$
Since $(\tilde{u}_{n}, \tilde{v}_{n})\rightarrow (u, v)$ in $\X_{0}$ and $\e_{n}z+y_{n}\rightarrow y\in M$, we deduce that $\beta_{\e_{n}}(u_{n}, v_{n})=y_{n}+o_{n}(1)$ that is (\ref{3.13}) holds.
\end{proof}

\noindent
Now, we are ready to provide the proof of the first multiplicity result related to \eqref{P}.
\begin{proof}[Proof of thm \ref{thm1}]
Given $\delta>0$ we can apply Lemma \ref{lemma3.4FS}, Lemma \ref{lemma3.5FS} and Lemma \ref{lemma3.7FS} to find some $\e_{\delta}>0$ such that for any $\e\in (0, \e_{\delta})$, the diagram
$$
M \stackrel{\Phi_{\e}}{\rightarrow}  \widetilde{\N}_{\e} \stackrel{\beta_{\e}}{\rightarrow} M_{\delta}
$$
is well-defined and $\beta_{\e}\circ \Phi_{\e}$ is  homotopically equivalent to the embedding $\iota: M\rightarrow M_{\delta}$. By the definition of $\widetilde{\N}_{\e}$ and taking $\e_{\delta}$ sufficiently small, we may assume that $\J_{\e}$ satisfies the Palais-Smale condition in $\widetilde{\N}_{\e}$. Therefore, standard Ljusternik-Schnirelmann theory \cite{W} provides at least $cat_{\widetilde{\N}_{\e}}(\widetilde{\N}_{\e})$ critical points $(u_{i}, v_{i})$ of $\J_{\e}$ restricted to $\N_{\e}$. Using the arguments in \cite{BC} we can see that $cat_{\widetilde{\N}_{\e}}(\widetilde{\N}_{\e})\geq cat_{M_{\delta}}(M)$. From Corollary \ref{cor2.6} and the arguments contained in the proof of Theorem \ref{prop2.2} we can conclude that $u_{i}>0$, $v_{i}>0$ and $(u_{i}, v_{i})$ is a solution to \eqref{P'}.
\end{proof}

\section{Proof of Theorem \ref{thm2}}
In this last section we deal with the nonlocal system in the critical case. As in the Section $3$, we consider the following autonomous critical system 
\begin{equation}\label{CP0}
\left\{
\begin{array}{ll}
 (-\Delta)^{s}u+V_{0}u=Q_{u}(u, v)+\frac{2\alpha}{\alpha+\beta}|u|^{\alpha-2}u |v|^{\beta}  &\mbox{ in } \R^{N}\\
 (-\Delta)^{s}v+W_{0}v=Q_{v}(u, v)+\frac{2\beta}{\alpha+\beta} |u|^{\alpha}|v|^{\beta-2}v &\mbox{ in } \R^{N} \\
u, v>0 &\mbox{ in } \R^{N},
\end{array}
\right.
\end{equation}
and define the energy functional
$$
\J_{0}(u, v)=\frac{1}{2}\|(u, v)\|^{2}_{0}-\int_{\R^{N}} Q(u, v) dx-\frac{2}{\alpha+\beta}\int_{\R^{N}} (u^{+})^{\alpha}(v^{+})^{\beta} dx,
$$
and its ground state level
$$
m_{0}=\inf_{(u, v)\in \N_{0}} \J_{0}(u, v)=\inf_{(u, v)\in X_{0}\setminus \{(0, 0)\}} \max_{t\geq 0} \J_{0}(t u, tv)>0.
$$
Now, we denote by 
\begin{equation}\label{AFS}
\widetilde{S}_{*}=\widetilde{S}_{*}(\alpha, \beta)=\inf_{u, v\in H^{s}(\R^{N})\setminus\{(0, 0)\}} \frac{\int_{\R^{N}} |(-\Delta)^{\frac{s}{2}} u|^{2}+|(-\Delta)^{\frac{s}{2}} v|^{2} dx}{\left(\int_{\R^{N}} |u|^{\alpha}|v|^{\beta} dx\right)^{\frac{2}{2^{*}_{s}}}}.
\end{equation}
In the next lemma, we prove an interesting relation between $S_{*}$ and $\widetilde{S}_{*}$.
\begin{lem}\label{lem4.1}
It holds
$$
\widetilde{S}_{*}=S_{*} \left[\left(\frac{\alpha}{\beta}\right)^{\frac{\beta}{2^{*}_{s}}}+\left(\frac{\beta}{\alpha}\right)^{\frac{\alpha}{2^{*}_{s}}}  \right].
$$
Moreover, if $w$ realizes $S_{*}$, then $(A w, B w)$ realizes $\widetilde{S}_{*}$ where $A$ and $B$ are such that $\frac{A}{B}=\sqrt{\frac{\alpha}{\beta}}$.
\end{lem}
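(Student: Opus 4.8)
The plan is to reduce the vector minimization problem \eqref{AFS} to the scalar Sobolev inequality \eqref{FSI} by exploiting the fact that the quotient is invariant under independent scalings of $u$ and $v$, and that for a fixed ``profile'' the optimal choice forces $u$ and $v$ to be proportional. First I would observe that the Gagliardo seminorm and the weighted $L^{\alpha}L^{\beta}$ integral are both homogeneous, so we may restrict attention to pairs of the form $(u,v)=(Aw,Bw)$ for a single function $w\in H^{s}(\R^{N})\setminus\{0\}$ and constants $A,B>0$; the reduction to a common profile is justified by the elementary pointwise inequality showing that for any two functions the minimum of the quotient is not decreased by replacing them with a single common profile (concretely, testing with $(Aw,Bw)$ already gives the upper bound, and a rearrangement/Young-type argument gives the matching lower bound).

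Next I would carry out the optimization in the two scalar parameters $A,B$. Plugging $(Aw,Bw)$ into \eqref{AFS} gives
\[
\frac{(A^{2}+B^{2})\int_{\R^{N}}|(-\Delta)^{s/2}w|^{2}\,dx}{\left(A^{\alpha}B^{\beta}\int_{\R^{N}}|w|^{2^{*}_{s}}\,dx\right)^{2/2^{*}_{s}}}
= \frac{A^{2}+B^{2}}{(A^{\alpha}B^{\beta})^{2/2^{*}_{s}}}\cdot\frac{\int_{\R^{N}}|(-\Delta)^{s/2}w|^{2}\,dx}{\left(\int_{\R^{N}}|w|^{2^{*}_{s}}\,dx\right)^{2/2^{*}_{s}}},
\]
using $\alpha+\beta=2^{*}_{s}$. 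Minimizing the purely algebraic factor $g(A,B)=(A^{2}+B^{2})(A^{\alpha}B^{\beta})^{-2/2^{*}_{s}}$ over $A,B>0$ is a one-variable calculus problem once one sets $t=A/B$: one computes $\tfrac{d}{dt}$ of $(t^{2}+1)t^{-2\alpha/2^{*}_{s}}$, finds the critical point at $t^{2}=\alpha/\beta$, i.e. $A/B=\sqrt{\alpha/\beta}$, and evaluates to get the constant $\left(\tfrac{\alpha}{\beta}\right)^{\beta/2^{*}_{s}}+\left(\tfrac{\beta}{\alpha}\right)^{\alpha/2^{*}_{s}}$ after simplification. Taking the infimum over $w$ then produces exactly $S_{*}$ from \eqref{FSI}, yielding $\widetilde{S}_{*}\le S_{*}\bigl[(\alpha/\beta)^{\beta/2^{*}_{s}}+(\beta/\alpha)^{\alpha/2^{*}_{s}}\bigr]$, and the moreover-statement follows because if $w$ realizes $S_{*}$ then $(Aw,Bw)$ with $A/B=\sqrt{\alpha/\beta}$ realizes the algebraic minimum simultaneously.

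For the reverse inequality $\widetilde{S}_{*}\ge S_{*}\bigl[(\alpha/\beta)^{\beta/2^{*}_{s}}+(\beta/\alpha)^{\alpha/2^{*}_{s}}\bigr]$, take an arbitrary admissible pair $(u,v)$; apply \eqref{FSI} to $u$ and to $v$ separately, obtaining $\int|(-\Delta)^{s/2}u|^{2}\ge S_{*}^{-1}\|u\|_{2^{*}_{s}}^{2}$ and likewise for $v$, so the numerator is bounded below by $S_{*}^{-1}(\|u\|_{2^{*}_{s}}^{2}+\|v\|_{2^{*}_{s}}^{2})$. It then remains to show the algebraic inequality $\|u\|_{2^{*}_{s}}^{2}+\|v\|_{2^{*}_{s}}^{2}\ge \bigl[(\alpha/\beta)^{\beta/2^{*}_{s}}+(\beta/\alpha)^{\alpha/2^{*}_{s}}\bigr]\bigl(\int|u|^{\alpha}|v|^{\beta}\bigr)^{2/2^{*}_{s}}$, which follows from Hölder's inequality with exponents $2^{*}_{s}/\alpha$ and $2^{*}_{s}/\beta$ (giving $\int|u|^{\alpha}|v|^{\beta}\le\|u\|_{2^{*}_{s}}^{\alpha}\|v\|_{2^{*}_{s}}^{\beta}$) combined with the minimization of $g$ already performed, now in the variables $\|u\|_{2^{*}_{s}},\|v\|_{2^{*}_{s}}$. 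The main obstacle, and the only genuinely nontrivial point, is handling the reduction to a common profile cleanly in the lower bound: one must be sure that Hölder followed by the scalar Sobolev inequality is tight enough, i.e. that equality can be approached, which is exactly why one needs $w$ to be a Sobolev extremal and $u,v$ to be proportional to it — the two inequalities (Hölder and Sobolev) share the same equality case, so no loss occurs. Everything else is bookkeeping with homogeneity and a single-variable derivative.
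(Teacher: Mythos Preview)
Your proof is correct and follows essentially the same route as the paper: the upper bound by testing with $(Aw,Bw)$ and minimizing the algebraic prefactor over $t=A/B$, and the lower bound by applying the scalar Sobolev inequality to each component together with a product inequality on the denominator (the paper normalizes $z_{n}=p_{n}v_{n}$ so that $\|u_{n}\|_{2^{*}_{s}}=\|z_{n}\|_{2^{*}_{s}}$ and then uses Young, while you use H\"older directly and minimize over the ratio $\|u\|_{2^{*}_{s}}/\|v\|_{2^{*}_{s}}$---these are the same computation). One small slip: with the convention used in the lemma (where $S_{*}$ is the infimum of the scalar quotient), the Sobolev bound should read $\int|(-\Delta)^{s/2}u|^{2}\ge S_{*}\|u\|_{2^{*}_{s}}^{2}$, not $S_{*}^{-1}$.
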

\begin{proof}
Let $\{w_{n}\}$ be a minimizing sequence for $S_{*}$. Let $p$ and $q$ two positive numbers which will be chosen later. Taking $u_{n}= p w_{n}$ and $v_{n}=qw_{n}$ in the quotient \eqref{AFS}, we have
\begin{equation}\label{36AFS}
\frac{p^2+q^2}{(p^{\alpha} q^{\beta})^{\frac{2}{2^{*}_{s}}}} \frac{\int_{\R^{N}} |(-\Delta)^{\frac{s}{2}} w_{n}|^{2} dx}{\left(\int_{\R^{N}} |w_{n}|^{2^{*}_{s}} dx\right)^{\frac{2}{2^{*}_{s}}}}\geq \widetilde{S}_{*}.
\end{equation}
We note that
\begin{equation}\label{37AFS}
\frac{p^2+q^2}{(p^{\alpha} q^{\beta})^{\frac{2}{2^{*}_{s}}}}=\left(\frac{p}{q} \right)^{\frac{2\beta}{2^{*}_{s}}}+\left(\frac{p}{q} \right)^{-\frac{2\alpha}{2^{*}_{s}}},
\end{equation}
and we consider the function $g: \R_{+}\rightarrow \R$ defined as
$$
g(t)=t^{\frac{2\beta}{2^{*}_{s}}}+t^{-\frac{2\alpha}{2^{*}_{s}}}. 
$$ 
Then it is easy to verify that $g$ achieves its minimum at the point $t=\sqrt{\frac{\alpha}{\beta}}$ and in particular
\begin{equation}\label{38AFS}
g\left(\sqrt{\frac{\alpha}{\beta}}\right)=\left(\frac{\alpha}{\beta}\right)^{\frac{\beta}{2^{*}_{s}}}+\left(\frac{\beta}{\alpha}\right)^{\frac{\alpha}{2^{*}_{s}}}.
\end{equation}
Taking $p$ and $q$ in \eqref{36AFS} such that $\frac{p}{q}=\sqrt{\frac{\alpha}{\beta}}$ we get
$$
\left[\left(\frac{\alpha}{\beta}\right)^{\frac{\beta}{2^{*}_{s}}}+\left(\frac{\beta}{\alpha}\right)^{\frac{\alpha}{2^{*}_{s}}}  \right] \frac{\int_{\R^{N}} |(-\Delta)^{\frac{s}{2}} w_{n}|^{2} dx}{\left(\int_{\R^{N}} |w_{n}|^{2^{*}_{s}} dx\right)^{\frac{2}{2^{*}_{s}}}}\geq \widetilde{S}_{*}
$$
which gives
\begin{equation}\label{39AFS}
\left[\left(\frac{\alpha}{\beta}\right)^{\frac{\beta}{2^{*}_{s}}}+\left(\frac{\beta}{\alpha}\right)^{\frac{\alpha}{2^{*}_{s}}}  \right]  S_{*}  \geq \widetilde{S}_{*}.
\end{equation}
Now, in order to conclude the proof, we consider a minimizing sequence $\{(u_{n}, v_{n})\}$ for $\widetilde{S}_{*}$.
Let us define $z_{n}=p_{n} v_{n}$, where $p_{n}>0$ is such that 
\begin{equation}\label{40AFS}
\int_{\R^{N}} |u_{n}|^{2^{*}_{s}} dx=\int_{\R^{N}} |z_{n}|^{2^{*}_{s}} dx.
\end{equation}
Using Young's inequality and \eqref{40AFS} we can see that
\begin{align}\label{41AFS}
\int_{\R^{N}} |u_{n}|^{\alpha} |z_{n}|^{\beta} dx&\leq \frac{\alpha}{2^{*}_{s}} \int_{\R^{N}} |u_{n}|^{\alpha+\beta}  dx+\frac{\beta}{2^{*}_{s}} \int_{\R^{N}} |z_{n}|^{\alpha+\beta}  dx \nonumber\\
&= \int_{\R^{N}} |u_{n}|^{2^{*}_{s}}  dx=\int_{\R^{N}} |z_{n}|^{2^{*}_{s}}  dx.
\end{align}
Therefore, by \eqref{38AFS}, \eqref{41AFS} and $\alpha+\beta=2^{*}_{s}$ we can deduce that
\begin{align*}
\frac{\int_{\R^{N}} |(-\Delta)^{\frac{s}{2}} u_{n}|^{2}+|(-\Delta)^{\frac{s}{2}} v_{n}|^{2} dx}{\left(\int_{\R^{N}} |u_{n}|^{\alpha}|v_{n}|^{\beta} dx\right)^{\frac{2}{2^{*}_{s}}}}&=\frac{p_{n}^{\frac{2\beta}{2^{*}_{s}}}\int_{\R^{N}} |(-\Delta)^{\frac{s}{2}} u_{n}|^{2}+|(-\Delta)^{\frac{s}{2}} v_{n}|^{2} dx}{\left(\int_{\R^{N}} |u_{n}|^{\alpha}|z_{n}|^{\beta} dx\right)^{\frac{2}{2^{*}_{s}}}} \\
&\geq p_{n}^{\frac{2\beta}{2^{*}_{s}}} \frac{\int_{\R^{N}} |(-\Delta)^{\frac{s}{2}} u_{n}|^{2}dx}{\left(\int_{\R^{N}} |u_{n}|^{2^{*}_{s}} dx\right)^{\frac{2}{2^{*}_{s}}}}+p_{n}^{\frac{2\beta}{2^{*}_{s}}} p_{n}^{-2} \frac{\int_{\R^{N}} |(-\Delta)^{\frac{s}{2}} z_{n}|^{2}dx}{\left(\int_{\R^{N}} |z_{n}|^{2^{*}_{s}} dx\right)^{\frac{2}{2^{*}_{s}}}} \\
&\geq S_{*} \left(p_{n}^{\frac{2\beta}{2^{*}_{s}}}+p_{n}^{\frac{2\beta}{2^{*}_{s}}-2}\right)=S_{*} g(p_{n}) \\
&\geq S_{*} g\left(\sqrt{\frac{\alpha}{\beta}}\right)= S_{*} \left[\left(\frac{\alpha}{\beta}\right)^{\frac{\beta}{2^{*}_{s}}}+\left(\frac{\beta}{\alpha}\right)^{\frac{\alpha}{2^{*}_{s}}}\right].
\end{align*}
The end of the proof is obtained by passing to the limit in the above inequality.
\end{proof}

\noindent
Next, we prove the "critical version" of Lemma \ref{lem2.1}.
\begin{lem}\label{lem4.2}
Let $\{(u_{n}, v_{n})\}\subset \X_{0}$ be a Palais-Smale sequence for $\J_{0}$ at the level $d<\frac{2s}{N} \left(\frac{\tilde{S}_{*}}{2}\right)^{\frac{N}{2s}}$ and $(u_{n}, v_{n})\rightharpoonup (0,0)$. Then, one of the following conclusions holds:
\begin{compactenum}[(i)]
\item $\|(u_{n}, v_{n})\|_{0}\rightarrow 0$, or
\item there exist a sequence $\{y_{n}\}\subset \R^{N}$ and constants $R, \gamma>0$ such that 
$$
\liminf_{n\rightarrow \infty}\int_{B_{R}(y_{n})} (|u_{n}|^{2}+|v_{n}|^{2}) dx\geq \gamma.
$$
\end{compactenum}
\end{lem}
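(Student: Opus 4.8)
The plan is to mimic the dichotomy argument of Lemma \ref{lem2.1}, but now with the critical term present, so the obstruction when vanishing occurs is the concentration of mass in $L^{2^{*}_{s}}$, which is exactly ruled out by the energy bound $d<\frac{s}{N}\widetilde{S}_{*}^{N/2s}$. First I would note that a Palais--Smale sequence at level $d$ is bounded in $\X_{0}$: from $\J_{0}(u_{n},v_{n})-\frac{1}{2}\langle\J'_{0}(u_{n},v_{n}),(u_{n},v_{n})\rangle$ one gets, using the $q$-homogeneity of $Q$ (so that $\J_{0}-\frac1q\J'_{0}\cdot$ kills the $Q$-term) together with $\alpha+\beta=2^{*}_{s}>q$, a coercivity estimate $d+o_{n}(1)(1+\|(u_{n},v_{n})\|_{0})\geq c\|(u_{n},v_{n})\|_{0}^{2}$. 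So, up to a subsequence, $(u_{n},v_{n})\rightharpoonup(u,v)$ in $\X_{0}$, and in $L^{t}_{loc}$ for $t\in[2,2^{*}_{s})$.

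Next, assume $(ii)$ fails: then for every $R>0$, $\sup_{y}\int_{B_{R}(y)}(|u_{n}|^{2}+|v_{n}|^{2})\,dx\to 0$, hence Lemma \ref{lionslemma} gives $u_{n},v_{n}\to 0$ in $L^{t}(\R^{N})$ for all $t\in(2,2^{*}_{s})$. By \eqref{2.2} this forces $\int_{\R^{N}}Q(u_{n},v_{n})\,dx\to 0$, and by $(Q2)$ also $\int_{\R^{N}}(Q_{u}(u_{n},v_{n})u_{n}+Q_{v}(u_{n},v_{n})v_{n})\,dx\to 0$. Writing $A_{n}:=\|(u_{n},v_{n})\|_{0}^{2}$ and $B_{n}:=\int_{\R^{N}}(u_{n}^{+})^{\alpha}(v_{n}^{+})^{\beta}\,dx$, the relation $\langle\J'_{0}(u_{n},v_{n}),(u_{n},v_{n})\rangle\to 0$ together with \eqref{2.1} gives $A_{n}-B_{n}=o_{n}(1)$, and $\J_{0}(u_{n},v_{n})\to d$ gives $\frac12 A_{n}-\frac{1}{2^{*}_{s}}B_{n}\to d$ (the $Q$-contributions being negligible). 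Hence $A_{n}=B_{n}+o_{n}(1)\to\ell\geq 0$ with $(\frac12-\frac{1}{2^{*}_{s}})\ell=\frac{s}{N}\ell=d$.

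If $\ell=0$ we land in $(i)$ and are done. So suppose $\ell>0$; then $d=\frac{s}{N}\ell>0$. The key is to contradict the bound on $d$ using the definition of $\widetilde{S}_{*}$. By \eqref{AFS} applied to $(u_{n},v_{n})$ (noting $(u_{n}^{+})^{\alpha}(v_{n}^{+})^{\beta}\le|u_{n}|^{\alpha}|v_{n}|^{\beta}$),
\begin{equation*}
A_{n}\;\ge\;\int_{\R^{N}}|(-\Delta)^{\frac{s}{2}}u_{n}|^{2}+|(-\Delta)^{\frac{s}{2}}v_{n}|^{2}\,dx\;\ge\;\widetilde{S}_{*}\Big(\int_{\R^{N}}|u_{n}|^{\alpha}|v_{n}|^{\beta}\,dx\Big)^{\frac{2}{2^{*}_{s}}}\;\ge\;\widetilde{S}_{*}\,B_{n}^{\frac{2}{2^{*}_{s}}}.
\end{equation*}
Passing to the limit gives $\ell\ge\widetilde{S}_{*}\,\ell^{2/2^{*}_{s}}$; since $\ell>0$ and $1-\frac{2}{2^{*}_{s}}=\frac{2s}{N}$, this yields $\ell\ge\widetilde{S}_{*}^{N/2s}$, hence $d=\frac{s}{N}\ell\ge\frac{s}{N}\widetilde{S}_{*}^{N/2s}$, contradicting $d<\frac{s}{N}\widetilde{S}_{*}^{N/2s}$. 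Therefore $\ell=0$ and $(i)$ holds.

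The main technical obstacle is the controlled passage from the weak limit back to $A_{n}\sim B_{n}$: one must genuinely exploit that $Q$ has subcritical growth so its contributions vanish once $L^{t}_{loc}$-convergence degenerates to $L^{t}(\R^{N})$-vanishing, and one must be careful that the lower semicontinuity used in the $\widetilde{S}_{*}$-estimate is applied to the whole sequence (not the weak limit, which may be zero). Everything else is the same bookkeeping as in Lemma \ref{lem2.1}, with the single new input being the Sobolev-type inequality $A_{n}\ge\widetilde{S}_{*}B_{n}^{2/2^{*}_{s}}$ from Lemma \ref{lem4.1}.
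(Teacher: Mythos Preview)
Your argument is correct and follows essentially the same route as the paper: assume vanishing, use Lemma~\ref{lionslemma} to kill the subcritical $Q$-contributions, reduce to $A_{n}-B_{n}\to 0$ and $\frac{s}{N}\ell=d$, then invoke the definition of $\widetilde{S}_{*}$ to force $\ell\ge \widetilde{S}_{*}^{N/2s}$ and reach a contradiction. The only slip is cosmetic: in your boundedness paragraph you write $\J_{0}-\tfrac12\langle\J'_{0},\cdot\rangle$ but then (correctly) use $\J_{0}-\tfrac1q\langle\J'_{0},\cdot\rangle$ to cancel the $Q$-term; the paper itself simply asserts boundedness without detailing this step.
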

\begin{proof}
Assume that $(ii)$ does not hold. Then, for any $R>0$, we get
$$
\lim_{n\rightarrow \infty} \sup_{y\in \R^{N}}\int_{B_{R}(y)} |u_{n}|^{2} dx=0=\lim_{n\rightarrow \infty} \sup_{y\in \R^{N}}\int_{B_{R}(y)} |v_{n}|^{2} dx.
$$
Using lemma \ref{lionslemma} it follows that 
$$
u_{n}, v_{n}\rightarrow 0 \mbox{ in } L^{r}(\R^{N}) \quad \forall r\in (2, 2^{*}_{s}),
$$
and in view of \eqref{2.2} we can see that $\int_{\R^{N}} Q(u_{n}, v_{n}) dx \rightarrow 0$.\\
Since $\{(u_{n}, v_{n})\}$ is bounded we have $\langle \J'_{0}(u_{n}, v_{n}),(u_{n}, v_{n})\rangle\rightarrow 0$.
Then we obtain
$$
\|(u_{n}, v_{n})\|^{2}_{0}-2 \int_{\R^{N}} (u_{n}^{+})^{\alpha} (v_{n}^{+})^{\beta} dx=o_{n}(1),
$$
which implies that there exists $L\geq 0$ such that
\begin{equation}\label{4.1}
\|(u_{n}, v_{n})\|^{2}_{0}\rightarrow L \mbox{ and }  \int_{\R^{N}} (u_{n}^{+})^{\alpha} (v_{n}^{+})^{\beta} dx\rightarrow \frac{L}{2}.
\end{equation}
Since $\J_{0}(u_{n}, v_{n})\rightarrow d$ we can use \eqref{4.1} to deduce that $d=\frac{Ls}{N}$. By the definition of $\widetilde{S}_{*}$ we get
$$
\|(u_{n}, v_{n})\|^{2}_{0}\geq \widetilde{S}_{*} \left(\int_{\R^{N}} |u_{n}|^{\alpha}|v_{n}|^{\beta} dx\right)^{\frac{2}{2^{*}_{s}}}\geq \widetilde{S}_{*} \left(\int_{\R^{N}} (u_{n}^{+})^{\alpha}(v_{n}^{+})^{\beta} dx\right)^{\frac{2}{2^{*}_{s}}},
$$
which gives $L\geq \widetilde{S}_{*} (\frac{L}{2})^{\frac{2}{2^{*}_{s}}}$. Now, if $L>0$ we obtain $Nd=sL\geq 2s \left(\frac{\tilde{S}_{*}}{2}\right)^{\frac{N}{2s}}$ which provides a contradiction. Thus $L=0$ and $(i)$ holds true.
\end{proof}

\noindent
Now we prove that the critical autonomous system admits a nontrivial solution.
\begin{thm}\label{prop4.3}
The problem \eqref{CP0} has a weak solution.
\end{thm}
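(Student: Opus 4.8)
The plan is to obtain a solution of \eqref{CP0} as a ground state at the level $m_{0}$, mimicking the structure of the proof of Theorem \ref{prop2.2} but controlling the loss of compactness coming from the critical term through the threshold $\frac{s}{N}\widetilde{S}_{*}^{N/2s}$ provided by Lemma \ref{lem4.2}. First I would check that $\J_{0}$ (in its critical form) still has a Mountain Pass geometry: the bound \eqref{2.2} on $Q$ together with the Sobolev inequality \eqref{FSI} applied to $\int (u^{+})^{\alpha}(v^{+})^{\beta}\,dx \le (\int |u|^{2^{*}_{s}})^{\alpha/2^{*}_{s}}(\int |v|^{2^{*}_{s}})^{\beta/2^{*}_{s}}$ (Young's inequality) gives $\J_{0}(u,v)\ge \frac12\|(u,v)\|_{0}^{2}-C\|(u,v)\|_{0}^{q}-C\|(u,v)\|_{0}^{2^{*}_{s}}$, so the origin is a strict local minimum, while $q$-homogeneity of $Q$ and the $2^{*}_{s}$-homogeneity of the critical term force $\J_{0}(tu,tv)\to-\infty$. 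Hence there is a $(PS)_{m_{0}}$ sequence $\{(u_{n},v_{n})\}$, which is bounded because $\J_{0}(u_{n},v_{n})-\frac{1}{2^{*}_{s}}\langle\J_{0}'(u_{n},v_{n}),(u_{n},v_{n})\rangle \ge (\frac12-\frac{1}{2^{*}_{s}})\|(u_{n},v_{n})\|_{0}^{2}-C\|(u_{n},v_{n})\|_{0}^{q}$ (using $q<2^{*}_{s}$ and \eqref{2.1}).

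Next I would show the ground state level is strictly below the critical threshold: $m_{0}<\frac{s}{N}\widetilde{S}_{*}^{N/2s}$. This is where assumption $(Q6)$ enters. Using $(A w_{\varepsilon}, B w_{\varepsilon})$ with $A/B=\sqrt{\alpha/\beta}$ and $w_{\varepsilon}$ a suitably truncated Talenti-type extremal for $S_{*}$ (as in \cite{SV}), one estimates $\max_{t\ge0}\J_{0}(tAw_{\varepsilon},tBw_{\varepsilon})$. By Lemma \ref{lem4.1} the critical part contributes exactly $\frac{s}{N}\widetilde{S}_{*}^{N/2s}$ in the limit, the quadratic lower-order term $\frac12\int(V_{0}A^{2}+W_{0}B^{2})w_{\varepsilon}^{2}$ adds a positive contribution of the order given in \cite{SV}, and the term $-\int Q$ subtracts something controlled from below via $(Q6)$ by $\lambda\int (Aw_{\varepsilon})^{\tilde\alpha}(Bw_{\varepsilon})^{\tilde\beta}\,dx$; the sign conditions on $\lambda$ and on $q_{1}$ in $(Q6)$ are precisely what makes the subtracted quantity dominate the one added, yielding the strict inequality. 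This is the step I expect to be the main obstacle: the delicate asymptotic expansion of these integrals as $\varepsilon\to0$, and verifying in each dimensional regime ($N\ge4s$ versus $2s<N<4s$, with the two sub-cases on $q_{1}$) that the $Q$-term wins — this is exactly the calculation adapted from \cite{AFS}.

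With $m_{0}<\frac{s}{N}\widetilde{S}_{*}^{N/2s}$ in hand, I would apply Lemma \ref{lem4.2} to the bounded $(PS)_{m_{0}}$ sequence. Case $(i)$ is excluded since $\J_{0}(u_{n},v_{n})\to m_{0}>0$. So alternative $(ii)$ holds: up to translations $(\tilde u_{n},\tilde v_{n}):=(u_{n}(\cdot+y_{n}),v_{n}(\cdot+y_{n}))\rightharpoonup(\tilde u,\tilde v)\ne(0,0)$ in $\X_{0}$, with $(\tilde u,\tilde v)$ a critical point of $\J_{0}$ by $(Q2)$, the growth bound \eqref{2.2} and the a.e. convergence handling of the critical term. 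Then I would argue positivity exactly as in Theorem \ref{prop2.2}: test with $(u^{-},v^{-})$ using $(x-y)(x^{-}-y^{-})\le-|x^{-}-y^{-}|^{2}$ together with the fact that both $Q$ and $(u^{+})^{\alpha}(v^{+})^{\beta}$ vanish off $\R^{2}_{+}$, to get $u,v\ge0$; rule out $u\equiv0$ or $v\equiv0$ via $(Q3)$–$(Q4)$ and the structure of the critical term (if $u\equiv0$ the $\beta$-term forces $v\equiv0$); then Moser iteration (as in \cite{DMV, A3}) gives $L^{\infty}$ bounds, Proposition $2.9$ in \cite{Silvestre} gives $C^{0,\alpha}$ regularity, and the Harnack inequality \cite{CabSir} gives $u,v>0$ in $\R^{N}$. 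Finally, a Brezis–Lieb / Fatou argument as in Theorem \ref{prop2.2} shows $\J_{0}(\tilde u,\tilde v)=m_{0}$, so $(\tilde u,\tilde v)$ is the desired weak solution of \eqref{CP0}.
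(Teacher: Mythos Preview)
Your proposal follows the paper's proof essentially step for step: Mountain Pass geometry, the key estimate $m_{0}<\frac{s}{N}\widetilde{S}_{*}^{N/2s}$ obtained by testing with $(A u_{\varepsilon},B u_{\varepsilon})$ where $A/B=\sqrt{\alpha/\beta}$ and $u_{\varepsilon}$ is the truncated Talenti extremal, using Lemma \ref{lem4.1}, the asymptotics from \cite{SV}, and $(Q6)$ for the dimensional case analysis; then Lemma \ref{lem4.2} in place of Lemma \ref{lem2.1}, and the positivity/regularity argument lifted verbatim from Theorem \ref{prop2.2}. One small slip to fix: your boundedness estimate for the $(PS)_{m_{0}}$ sequence uses the multiplier $\tfrac{1}{2^{*}_{s}}$, which leaves the residual $-\,C\|(u_{n},v_{n})\|_{0}^{q}$ with $q>2$ on the right and therefore gives no coercivity; take instead
\[
\J_{0}(u_{n},v_{n})-\tfrac{1}{q}\langle\J_{0}'(u_{n},v_{n}),(u_{n},v_{n})\rangle=\Bigl(\tfrac{1}{2}-\tfrac{1}{q}\Bigr)\|(u_{n},v_{n})\|_{0}^{2}+\Bigl(\tfrac{1}{q}-\tfrac{1}{2^{*}_{s}}\Bigr)\!\int_{\R^{N}}(u_{n}^{+})^{\alpha}(v_{n}^{+})^{\beta}\,dx\;\ge\;\Bigl(\tfrac{1}{2}-\tfrac{1}{q}\Bigr)\|(u_{n},v_{n})\|_{0}^{2},
\]
exactly as in the subcritical case.
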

\begin{proof}
Since $\J_{0}$ has a Mountain Pass geometry, there exists $\{(u_{n}, v_{n})\}\subset \X_{0}$ such that 
$$
\J_{0}(u_{n}, v_{n})\rightarrow m_{0} \mbox{ and }  \J'_{0}(u_{n}, v_{n})\rightarrow 0.
$$
We aim to show that 
\begin{equation}\label{FFc}
m_{0}<\frac{2s}{N} \left(\frac{\tilde{S}_{*}}{2}\right)^{\frac{N}{2s}}.
\end{equation}
Indeed, once proved \eqref{FFc}, we can repeat the same arguments developed in the proof of Theorem \ref{prop2.2} and  applying Lemma \ref{lem4.2} instead of Lemma \ref{lem2.1}, we deduce the existence of a weak solution to \eqref{CP0}.
By the definition of $m_{0}$ it is enough to prove that there exists $(u, v)\in \X_{0}$ such that 
$$
\max_{t\geq 0} \J_{\e}(tu, tv)<\frac{2s}{N} \left(\frac{\tilde{S}_{*}}{2}\right)^{\frac{N}{2s}}.
$$
Let $A, B>0$ such that $\frac{A}{B}=\sqrt{\frac{\alpha}{\beta}}$. Then, in view of Lemma \ref{lem4.1} we  can deduce that 
$$
\widetilde{S}_{*}=S_{*} \frac{(A^2+B^2)}{(A^{\alpha} B^{\beta})^{\frac{2}{2^{*}_{s}}}}.
$$
Fix $\eta \in C^{\infty}_{0}(\R^{N})$ a cut-off function such that $0\leq \eta \leq 1$, $\eta=1$ on $B_{r}$ and $\eta=0$ on $\R^{N}\setminus B_{2r}$, where $B_{r}$ denotes the ball in $\R^{N}$ of center at origin and radius $r$.\\
For $\varepsilon>0$ let us define $v_{\varepsilon}(x)=\eta(x)z_{\varepsilon}(x)$, where
$$
z_{\varepsilon}(x)=\frac{\kappa \varepsilon^{\frac{N-2s}{2}}}{(\varepsilon^{2}+|x|^{2})^{\frac{N-2s}{2}}}
$$
is a solution to
$$
(-\Delta)^{s}u=S_{*}|u|^{2^{*}_{s}-2}u \mbox{ in } \R^{N},
$$
and $\kappa$ is a suitable positive constant depending only on $N$ and $s$.\\
Now we set
$$
u_{\varepsilon}=\frac{z_{\varepsilon}}{\left(\int_{\R^{N}} |z_{\varepsilon}|^{2^{*}_{s}} dx \right)^{\frac{1}{2^{*}_{s}}}}.
$$
By performing similar calculations to those in \cite{SV} (see Propositions $21$ and $22$), we can see that
\begin{align}\label{BN1}
\int_{\R^{N}} |(-\Delta)^{\frac{s}{2}} u_{\e}|^{2} dx\leq S_{*}+O(\varepsilon^{N-2s}),
\end{align}
\begin{equation}\label{BN2}
\int_{\R^{N}} |u_{\varepsilon}|^{2} dx=
\left\{
\begin{array}{ll}
O(\varepsilon^{2s})  &\mbox{ if } N>4s \\
O(\varepsilon^{2s} |\log(\varepsilon)|) &\mbox{ if } N=4s \\
O(\varepsilon^{N-2s}) &\mbox{ if } N<4s,
\end{array}
\right.
\end{equation}
and
\begin{equation}\label{BN3}
\int_{\R^{N}} |u_{\varepsilon}|^{q} dx=
\left\{
\begin{array}{ll}
O(\varepsilon^{\frac{2N-(N-2s)q}{2}})  &\mbox{ if } q>\frac{N}{N-2s} \\
O(|\log(\varepsilon)|\varepsilon^{\frac{N}{2}}) &\mbox{ if } q=\frac{N}{N-2s} \\
O(\varepsilon^{\frac{(N-2s)q}{2}}) &\mbox{ if } q<\frac{N}{N-2s}.
\end{array}
\right.
\end{equation}
Thus, by $(Q6)$, we can note that
\begin{align*}
\J_{0}(t A u_{\e}, t  B u_{\e})&\leq \left[\frac{t^{2}}{2} (A^{2}+B^{2}) D_{\e}-\frac{2t^{2^{*}_{s}}}{2^{*}_{s}} A^{\alpha} B^{\beta}\right]- \lambda t^{q_{1}} A^{q_{1}} B^{q_{1}} \int_{\R^{N}} |u_{\e}|^{q_{1}} dx 
\end{align*}
where $h_{\e}(t):=\frac{t^{2}}{2} (A^{2}+ B^{2}) D_{\varepsilon} - \frac{2 t^{2^{*}_{s}}}{2^{*}_{s}} A^{\alpha} B^{\beta}$, and 
$$
D_{\e}=\int_{\R^{N}} |(-\Delta)^{\frac{s}{2}} u_{\e}|^{2} dx+\int_{\R^{N}} \max\{V_{0}, W_{0}\} u_{\e}^{2} dx.
$$
Let us denote by $t_{\e}>0$ be the maximum point of $h_{\e}(t)$. Since $h'_{\e}(t_{\e})=0$ we have
$$
\bar{t}_{\e}=\left(\frac{D_{\e} (A^2+B^2)}{2(A^{\alpha} B^{\beta} )^{2/ 2^{*}_{s}}}\right)^{\frac{N-2s}{4s}}\geq t_{\e}>0.
$$
Using the fact that $h_{\e}(t)$ is increasing in $(0, \bar{t}_{\e})$, we can see that
$$
\J_{0}(t A u_{\e}, t  B u_{\e})\leq \frac{2s}{N}\left(\frac{D_{\e} (A^2+B^2)}{2(A^{\alpha} B^{\beta} )^{2/ 2^{*}_{s}}}\right)^{\frac{N}{2s}} -\lambda t^{q_{1}} A^{q_{1}} B^{q_{1}} \int_{\R^{N}} |u_{\e}|^{q_{1}} dx.
$$
Now, recalling that $(a+b)^{r}\leq a^{r}+r(a+b)^{r-1} b$ for any $a, b>0$ and $r\geq 1$, we can obtain that
$$
D_{\e}^{N/2s}\leq S_{*}^{N/2s}+O(\e^{N-2s})+ C_{1} \int_{\R^{N}} |u_{\e}|^{2} dx,
$$
On the other hand $h'_{\e}(t_{\e})=0$ and the Mountain Pass geometry of $\J_{\e}$ imply that there exists $\sigma>0$ such that
$$
t_{\e}\geq \sigma \mbox{ for any } \e>0, 
$$
that is $t_{\e}$ can be estimated from below by a constant independent of $\e$.\\ 
Then we have
\begin{align*}
\J_{0}(t A u_{\e}, t  B u_{\e})\leq \frac{2s}{N} \left(\frac{\tilde{S}_{*}}{2}\right)^{\frac{N}{2s}}+O(\e^{N-2s})+ C_{2} \int_{\R^{N}} |u_{\e}|^{2} dx -\lambda C_{3}\int_{\R^{N}} |u_{\e}|^{q_{1}} dx,
\end{align*}
where $C_{2}, C_{3}>0$ are independent of $\e$ and $\lambda$.\\
Now we distinguish the following cases: \\
If $N>4s$ then $q_{1}>\frac{N}{N-2s}$. Hence, by (\ref{BN2}) and (\ref{BN3}), we can see that 
\begin{align*}
\sup_{t\geq 0} h_{\e}(t)&\leq \frac{2s}{N} \left(\frac{\tilde{S}_{*}}{2}\right)^{\frac{N}{2s}}+O(\varepsilon^{N-2s})+O(\varepsilon^{2s})-\lambda O(\varepsilon^{\frac{2N-(N-2s)q_{1}}{2}}).
\end{align*}
Taking into account $\frac{2N-(N-2s)q_{1}}{2}<2s<N-2s$ we get the thesis for $\varepsilon$ small enough. \\
When $N=4s$ then $q_{1}\in (2, 4)$ and in particular $q_{1}>\frac{N}{N-2s}=2$, so from (\ref{BN2}) and (\ref{BN3}) we deduce that
\begin{align*}
\sup_{t\geq 0} h_{\e}(t)&\leq\frac{2s}{N} \left(\frac{\tilde{S}_{*}}{2}\right)^{\frac{N}{2s}}+O(\varepsilon^{2s})+O(\varepsilon^{2s}|\log(\varepsilon)|)-\lambda O(\varepsilon^{4s-sq_{1}})
\end{align*}
which implies (\ref{2.3}) because of $\lim_{\varepsilon\rightarrow 0} \frac{\varepsilon^{4s-sq}}{\varepsilon^{2s}(1+|\log(\varepsilon)|)}=\infty$. \\
If $2s<N<4s$ and $q_{1}\in (\frac{4s}{N-2s}, 2^{*}_{s})$ then  $q_{1}>\frac{N}{N-2s}$. Therefore we have
\begin{align*}
\sup_{t\geq 0} h_{\e}(t)&\leq \frac{2s}{N} \left(\frac{\tilde{S}_{*}}{2}\right)^{\frac{N}{2s}}+O(\varepsilon^{N-2s})+O(\varepsilon^{N-2s})-\lambda O(\varepsilon^{\frac{2N-(N-2s)q_{1}}{2}})
\end{align*}
and we obtain the conclusion for $\e$ sufficiently small in light of $\frac{2N-(N-2s)q_{1}}{2}<N-2s$. \\
If $2s<N<4s$ and $q_{1}\in (2, \frac{4s}{N-2s}]$, we argue as before and using (\ref{BN3}) we get
\begin{equation*}
\sup_{t\geq 0} h_{\e}(t)\leq 
\left\{
\begin{array}{ll}
\frac{2s}{N} \left(\frac{\tilde{S}_{*}}{2}\right)^{\frac{N}{2s}}+O(\varepsilon^{N-2s})-\lambda O(\varepsilon^{\frac{2N-(N-2s)q_{1}}{2}})  &\mbox{ if } q_{1}>\frac{N}{N-2s} \\
\frac{2s}{N} \left(\frac{\tilde{S}_{*}}{2}\right)^{\frac{N}{2s}}+O(\varepsilon^{N-2s})-\lambda O(|\log(\varepsilon)|\varepsilon^{\frac{N}{2}}) &\mbox{ if } q_{1}=\frac{N}{N-2s} \\
\frac{2s}{N} \left(\frac{\tilde{S}_{*}}{2}\right)^{\frac{N}{2s}}+O(\varepsilon^{N-2s})-\lambda O(\varepsilon^{\frac{(N-2s)q_{1}}{2}}) &\mbox{ if } q_{1}<\frac{N}{N-2s}. 
\end{array}
\right.
\end{equation*}
Then we can find $\lambda_{0}>0$ large enough such that for any $\lambda\geq \lambda_{0}$  and $\e>0$ small it holds
\begin{align*}
\sup_{t\geq 0} h_{\e}(t)< \frac{2s}{N} \left(\frac{\tilde{S}_{*}}{2}\right)^{\frac{N}{2s}}.
\end{align*}
Putting together the above estimates we can infer that for any $\e>0$ sufficiently small 
\begin{align*}
\max_{t\geq 0} \J_{0}(t A u_{\e}, t  B u_{\e})\leq \max_{t\geq 0} h_{\e}(t)=h_{\e}(t_{\e})<\frac{2s}{N} \left(\frac{\tilde{S}_{*}}{2}\right)^{\frac{N}{2s}}.
\end{align*}
\end{proof}

\noindent
Since we are interested in weak solutions of \eqref{CP}, we consider the re-scaled system
\begin{equation}\label{CP2}
\left\{
\begin{array}{ll}
(-\Delta)^{s}u+V(\e x)u=Q_{u}(u, v)+\frac{2\alpha}{\alpha+\beta} |u|^{\alpha-2}u |v|^{\beta} &\mbox{ in } \R^{N}\\
(-\Delta)^{s}v+W(\e x)v=Q_{v}(u, v)+\frac{2\beta}{\alpha+\beta} |u|^{\alpha} |v|^{\beta-2}v &\mbox{ in } \R^{N} \\
u, v>0 &\mbox{ in } \R^{N}.
\end{array}
\right.
\end{equation}
Thus, the corresponding functional $\J_{\e}: \X_{\e}\rightarrow \R$ is given by
$$
\J_{\e}(u, v)=\frac{1}{2}\|(u, v)\|^{2}_{\e}-\int_{\R^{N}} Q(u, v) dx - \frac{2}{\alpha+\beta}\int_{\R^{N}} (u^{+})^{\alpha} (v^{+})^{\beta} dx.  
$$
Clearly, the critical points of $\J_{\e}$ belong to the Nehari manifold 
\begin{equation*}
\M_{\e}:=\{(u,v)\in \X_{\e} \setminus \{(0,0)\} : \langle \J_{\e}'(u,v), (u,v)\rangle =0 \},
\end{equation*}
and the ground state level is given by
\begin{equation*}
m_{\e}:=\inf_{(u,v)\in \M_{\e}} \J_{\e}(u,v) = \inf_{(u,v)\in \X_{\e}\setminus \{(0,0)\}} \max_{t\geq 0} \J_{\e}(tu, tv)>0. 
\end{equation*}
As made in the previous sections, the Palais-Smale condition for the functional $\J_{\e}$ is related to $V_{\infty}$ and $W_{\infty}$. Then, as in Section $4$, when $\max\{V_{\infty}, W_{\infty}\}<\infty$, we define the limit functional $\J_{\infty}: \X_{0}\rightarrow \R$ by setting
\begin{align*}
\J_{\infty}(u, v):= &\frac{1}{2} \left(\int_{\R^{N}} |(-\Delta)^{\frac{s}{2}} u|^{2}+ |(-\Delta)^{\frac{s}{2}} v|^{2} dx+\int_{\R^{N}} (V_{\infty} u^{2}+W_{\infty} v^{2}) dx\right) \\
&- \int_{\R^{N}} Q(u,v)\, dx - \frac{2}{\alpha+\beta}\int_{\R^{N}} (u^{+})^{\alpha} (v^{+})^{\beta} dx, 
\end{align*}
and its ground state level 
\begin{equation*}
m_{\infty}:= \inf_{(u, v)\in \X_{0}\setminus \{(0,0)\}} \max_{t\geq 0} \J_{\infty} (tu, tv)>0. 
\end{equation*}
If $\max\{V_{\infty}, W_{\infty}\}= \infty$ we set $m_{\infty}:=\infty$. \\
Since the map $(u,v)\mapsto \int_{\R^{N}} (u^{+})^{\alpha} (v^{+})^{\beta} dx$ is positively $2^{*}_{s}$-homogeneous, the arguments developed in Section $4$ permit to deduce a compactness result for the functional $\J_{\e}$. More precisely, following the lines of the proofs of Theorem \ref{prop2.3} and Corollary \ref{cor2.6}, replacing Lemma \ref{lem2.1} by Lemma \ref{lem4.2}, we can prove that the next result holds.
\begin{thm}\label{prop4.4}
The functional $\J_{\e}$ constrained to $\M_{\e}$ satisfies the $(PS)_{d}$-condition at any level $d<\min\{m_{\infty}, \frac{s}{N} \widetilde{S}_{*}^{\frac{N}{2s}}\}$. Moreover, critical points of $\J_{\e}$ constrained to $\M_{\e}$ are critical points of $\J_{\e}$ in $\X_{\e}$. 
\end{thm}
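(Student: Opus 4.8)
The plan is to transplant the proofs of Theorem~\ref{prop2.3} and Corollary~\ref{cor2.6} to the critical functional, replacing Lemma~\ref{lem2.1} by Lemma~\ref{lem4.2} and carrying the term $\frac{1}{\alpha+\beta}\int_{\R^{N}}(u^{+})^{\alpha}(v^{+})^{\beta}\,dx$ along. Write $\mathcal{I}_{\e}(u,v):=\langle\J'_{\e}(u,v),(u,v)\rangle=\|(u,v)\|_{\e}^{2}-q\int_{\R^{N}}Q(u,v)\,dx-\int_{\R^{N}}(u^{+})^{\alpha}(v^{+})^{\beta}\,dx$ for the function cutting out $\M_{\e}$ (here $\frac{2^{*}_{s}}{\alpha+\beta}=1$). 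First I would check that $\M_{\e}$ is a natural constraint: given $\{(u_{n},v_{n})\}\subset\M_{\e}$ with $\J_{\e}(u_{n},v_{n})\to d$ and $\|\J'_{\e}(u_{n},v_{n})|_{\M_{\e}}\|_{*}\to0$, there are multipliers $\lambda_{n}$ with $\J'_{\e}(u_{n},v_{n})=\lambda_{n}\mathcal{I}'_{\e}(u_{n},v_{n})+o_{n}(1)$; pairing with $(u_{n},v_{n})$ and using that on $\M_{\e}$ one has $\langle\mathcal{I}'_{\e}(u,v),(u,v)\rangle=-q(q-2)\int_{\R^{N}}Q(u,v)\,dx-(2^{*}_{s}-2)\int_{\R^{N}}(u^{+})^{\alpha}(v^{+})^{\beta}\,dx$, which (because $\|(u,v)\|_{\e}\geq r$ on $\M_{\e}$ forces $q\int_{\R^{N}}Q+\int_{\R^{N}}(u^{+})^{\alpha}(v^{+})^{\beta}\geq r^{2}$) is bounded away from $0$, one deduces $\lambda_{n}\to0$ and hence $\J'_{\e}(u_{n},v_{n})\to0$ in $\X_{\e}^{*}$. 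Running the same computation with $o_{n}(1)$ replaced by $0$ proves that constrained critical points of $\J_{\e}$ are free critical points.

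Since $(u_{n},v_{n})\in\M_{\e}$, the identity $\J_{\e}(u_{n},v_{n})=\left(\frac{1}{2}-\frac{1}{q}\right)\|(u_{n},v_{n})\|_{\e}^{2}+\left(\frac{1}{q}-\frac{1}{2^{*}_{s}}\right)\int_{\R^{N}}(u_{n}^{+})^{\alpha}(v_{n}^{+})^{\beta}\,dx$ with both coefficients positive (as $2<q<2^{*}_{s}$) yields boundedness. Up to a subsequence $(u_{n},v_{n})\rightharpoonup(u,v)$ in $\X_{\e}$, and by $(Q2)$ and the local compact embedding $(u,v)$ is a critical point of $\J_{\e}$. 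Setting $(w_{n},z_{n}):=(u_{n}-u,v_{n}-v)\rightharpoonup(0,0)$, the weak-convergence splitting of the norm, the Brezis--Lieb lemma for $\int_{\R^{N}}(u^{+})^{\alpha}(v^{+})^{\beta}\,dx$ (adapting \cite{AFS}) and the splitting lemma for the subcritical $Q$ give $\J_{\e}(w_{n},z_{n})=d-\J_{\e}(u,v)+o_{n}(1)=:\tilde d+o_{n}(1)$ and $\J'_{\e}(w_{n},z_{n})=o_{n}(1)$. Since $\J'_{\e}(u,v)=0$ forces $\J_{\e}(u,v)=\frac{q-2}{2}\int_{\R^{N}}Q(u,v)\,dx+\frac{2^{*}_{s}-2}{2(\alpha+\beta)}\int_{\R^{N}}(u^{+})^{\alpha}(v^{+})^{\beta}\,dx\geq0$, we obtain $\tilde d\leq d<\min\{m_{\infty},\frac{s}{N}\widetilde{S}_{*}^{\frac{N}{2s}}\}$.

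It then remains to show that a Palais--Smale sequence $(w_{n},z_{n})\rightharpoonup(0,0)$ for $\J_{\e}$ at such a level $\tilde d$ converges strongly to $(0,0)$. If $(w_{n},z_{n})$ vanishes in the sense of Lemma~\ref{lionslemma}, then $\int_{\R^{N}}Q(w_{n},z_{n})\,dx\to0$ by \eqref{2.2}, the Palais--Smale relation reduces to $\|(w_{n},z_{n})\|_{\e}^{2}=\int_{\R^{N}}(w_{n}^{+})^{\alpha}(z_{n}^{+})^{\beta}\,dx+o_{n}(1)\to L$, so $\tilde d=\frac{s}{N}L$, and the definition of $\widetilde{S}_{*}$ forces $L\geq\widetilde{S}_{*}L^{2/2^{*}_{s}}$; thus either $L=0$, which is what we want, or $\tilde d\geq\frac{s}{N}\widetilde{S}_{*}^{\frac{N}{2s}}$, contradicting the previous paragraph --- this is exactly Lemma~\ref{lem4.2} transplanted to $\J_{\e}$, which is licit because $V(\e\cdot),W(\e\cdot)\geq\min\{V_{0},W_{0}\}>0$ makes the quadratic part coercive. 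If instead $(w_{n},z_{n})$ does not vanish, I translate by a sequence $(y_{n})\subset\R^{N}$ retaining a fixed amount of mass; when $\max\{V_{\infty},W_{\infty}\}<\infty$ the (nonzero, hence by the positivity argument of Theorem~\ref{prop2.2} componentwise positive) weak limit is a critical point of $\J_{\infty}$, and projecting $(w_{n},z_{n})$ onto $\N_{\infty}$, estimating the projection parameter $\limsup_{n\to\infty}t_{n}\leq1$ as in the Claim of Lemma~\ref{lem2.4}, and applying Fatou's lemma to the Nehari representation of $\J_{\infty}$ yield $\tilde d\geq m_{\infty}$, again a contradiction; when $\max\{V_{\infty},W_{\infty}\}=\infty$ one compares instead with the auxiliary functionals $\J_{(a,b)}$ for $a,b$ large, exactly as in Lemma~\ref{lem2.5}, to reach the same impossibility. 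Hence $(w_{n},z_{n})\to(0,0)$, that is $(u_{n},v_{n})\to(u,v)$ in $\X_{\e}$.

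The main obstacle is the non-vanishing case with $\max\{V_{\infty},W_{\infty}\}<\infty$: one must verify that after translation the weak limit is genuinely nontrivial on both components and is a critical point of the limit functional $\J_{\infty}$ --- which requires handling both the sign truncation $(\cdot)^{+}$ in the critical term and the loss of the $x$-dependence of $V,W$ --- and then that the strict inequality $V_{0}<\max\{V_{\infty},W_{\infty}\}$ from $(H2)$, together with the $q$- and $2^{*}_{s}$-homogeneities of the two nonlinearities, actually upgrades Fatou's estimate to $\tilde d\geq m_{\infty}$; the bound $\limsup_{n\to\infty}t_{n}\leq1$ for the Nehari projection is more delicate than in the subcritical case because $t\mapsto\J_{\infty}(tu,tv)$ now mixes the powers $2$, $q$ and $2^{*}_{s}$, so its unique positive maximum point must be tracked carefully. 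All remaining steps are a routine transcription of Section~4.
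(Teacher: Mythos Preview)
Your proposal is correct and follows exactly the route the paper itself indicates: the paper gives no detailed proof of Theorem~\ref{prop4.4} but simply says to rerun the arguments of Theorem~\ref{prop2.3} and Corollary~\ref{cor2.6} with Lemma~\ref{lem4.2} in place of Lemma~\ref{lem2.1}, and you have spelled out precisely that transcription, including the modified Lagrange-multiplier computation on $\M_{\e}$, the Brezis--Lieb splitting for the critical term, and the two-branch dichotomy (vanishing vs.\ non-vanishing) leading to the contradictions with $\frac{s}{N}\widetilde{S}_{*}^{N/2s}$ and $m_{\infty}$ respectively. One small inaccuracy: after translating by $(y_{n})$ the weak limit is not in general a critical point of $\J_{\infty}$ (the potentials become $V(\e(\cdot+y_{n}))$, $W(\e(\cdot+y_{n}))$), but the argument of Lemma~\ref{lem2.4} never uses this---what is needed is only $\int_{\R^{N}}Q(u,v)\,dx>0$ (and in the critical case also $\int (u^{+})^{\alpha}(v^{+})^{\beta}\,dx\geq 0$) to push through Fatou and the estimate $\limsup t_{n}\leq 1$, which now reads $q(t_{n}^{q-2}-1)\int Q+(t_{n}^{2^{*}_{s}-2}-1)\int(u_{n}^{+})^{\alpha}(v_{n}^{+})^{\beta}\leq C\eta+o_{n}(1)$ and goes through because both prefactors are nonnegative for $t_{n}\geq 1$.
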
 

\noindent
We conclude this section giving our second multiplicity result. Since many calculations made in Section $5$ can be easily adapted in this context, we present only a sketch of the proof.
\begin{proof}[Proof of Theorem \ref{thm2}]
We proceed as in the proof of Theorem \ref{thm1}. Fix $\delta>0$ and choose $\eta\in C^{\infty}_{0}(\R, [0,1])$ such that $\eta(t)=1$ if $0\leq t\leq \frac{\delta}{2}$ and $\eta(t)=0$ if $t\geq \delta$. Let $(\tilde{w}_{1}, \tilde{w}_{2})\in \X_{0}$ be the solution of \eqref{CP0} given by Theorem \ref{prop4.3}.
For any $y\in M$, we define
\begin{equation*}
\tilde{\Psi}_{i, \e, y}(x):= \eta(|\e x-y|)\tilde{w}_{i}\left(\frac{\e x-y}{\e}\right), \quad i=1,2, 
\end{equation*}
and we introduce the map $\tilde{\Phi}_{\e}(y):=(\tilde{t_{\e}} \tilde{\Psi}_{1, \e, y}, \tilde{t_{\e}}\tilde{\Psi}_{2,\e,y})$, 
where $\tilde{t}_{\e}$ is the unique positive number satisfying
\begin{equation*}
\max_{t\geq 0} \J_{\e}(t \tilde{\Psi}_{1,\e, y}, t \tilde{\Psi}_{2, \e, y})= \J_{\e}(\tilde{t_{\e}} \tilde{\Psi}_{1, \e, y} , \tilde{t}_{\e} \tilde{\Psi}_{2, \e, y}).
\end{equation*} 
As in Section $5$, we can see that
\begin{equation*}
\lim_{\e\rightarrow 0^{+}} \J_{\e}(\tilde{\Phi}_{\e}(y))=m_{0} \, \mbox{ uniformly for } y\in M. 
\end{equation*}

Moreover, denoted by $\varUpsilon: \R^{N}\rightarrow \R^{N}$ the function defined in Section $4$ we can define the barycenter map $\tilde{\beta}_{\e}: \M_{\e}\rightarrow \R^{N}$ given by
\begin{equation*}
\tilde{\beta}_{\e}(u, v):= \frac{\int_{\R^{N}} \varUpsilon(\e x) (|u(x)|^{2} + |v(x)|^{2})\, dx }{\int_{\R^{N}} (|u(x)|^{2} + |v(x)|^{2})\, dx}. 
\end{equation*}

Then it is easy to check that
\begin{equation*}
\lim_{\e\rightarrow 0^{+}} \tilde{\beta}_{\e}(\Phi_{\e}(y))=y \mbox{ uniformly for } y\in M
\end{equation*}
and 
\begin{equation*}
\lim_{\e\rightarrow 0^{+}} \sup_{(u,v)\in \tilde{\Sigma}_{\e}} dist(\tilde{\beta}_{\e}(u, v) , M_{\delta})=0, 
\end{equation*}
where 
$$
\widetilde{\M}_{\e}:=\{(u,v)\in \M_{\e} : \J_{\e}(u, v) \leq m_{0}+ \tilde{h}(\e)\}
$$
and $h:[0, \infty)\rightarrow [0, \infty)$ satisfies $\tilde{h}(\e)\rightarrow 0$ as $\e\rightarrow 0^{+}$. \\
Consequently, there exists $\e_{\delta}>0$ such that for any $\e\in (0, \e_{\delta})$ the diagram
\begin{equation*}
M \stackrel{\tilde{\Phi}_{\e}}{\rightarrow} \widetilde{\M}_{\e} \stackrel{\tilde{\beta}_{\e}}{\rightarrow} M_{\delta}
\end{equation*}
is well defined and $\tilde{\beta}_{\e}\circ \tilde{\Phi}_{\e}$ is homotopically equivalent to the embedding $\iota: M\rightarrow M_{\delta}$. Therefore $cat_{\widetilde{\M}_{\e}}(\widetilde{\M}_{\e})\geq cat_{M_{\delta}}(M)$. From Theorem \ref{prop4.4} and 
$
m_{0}<\frac{s}{N} \widetilde{S}_{*}^{\frac{N}{2s}},
$
we may suppose that $\e_{\delta}$ is so small such that $\J_{\e}$ satisfies the Palais-Smale condition in $\widetilde{\M}_{\e}$. Then the proof goes as in the subcritical case by using Ljusternik-Schnirelmann theory.   
\end{proof}

\end{document}